\def\thesection{\arabic{section}}
\def\theequation{\thesection.\arabic{equation}}
\newcommand{\ds} {\displaystyle}
\newcommand{\e}{\epsilon}
\newcommand{\pa} {\partial}
\newcommand{\al} {\alpha}
\newcommand{\ba} {\beta}
\newcommand{\ga} {\gamma}
\newcommand{\Ga} {\Gamma}
\newcommand{\Om} {\Omega}
\newcommand{\ra} {\rightarrow}
\newcommand{\ov}{\overline}
\newcommand{\de} {\delta}
\newcommand{\De} {\Delta}
\newcommand{\la} {\lambda}
\newcommand{\La} {\Lambda}
\newcommand{\vth}{\vartheta}
\newcommand{\noi} {\noindent}
\newcommand{\na} {\nabla}
\newcommand{\ul} {\underline}
\newcommand{\mb} {\mathbb}
\newcommand{\mc} {\mathcal}
\newcommand{\ld} {\langle}
\newcommand{\rd} {\rangle}
\newcommand{\tl} {\tilde}
\def\theequation{\@arabic{\c@section}.\@arabic{\c@equation}}
\def\QED{\hfill {$\square$}\goodbreak \medskip}
\newtheorem{Theorem}{Theorem}[section]
\newtheorem{Lemma}[Theorem]{Lemma}
\newtheorem{Proposition}[Theorem]{Proposition}
\newtheorem{Corollary}[Theorem]{Corollary}
\newtheorem{Remark}[Theorem]{Remark}
\newtheorem{Definition}[Theorem]{Definition}
\def\Xint#1{\mathchoice
	{\XXint\displaystyle\textstyle{#1}}%
	{\XXint\textstyle\scriptstyle{#1}}%
	{\XXint\scriptstyle\scriptscriptstyle{#1}}%
	{\XXint\scriptscriptstyle\scriptscriptstyle{#1}}%
	\!\int}
\def\XXint#1#2#3{{\setbox0=\hbox{$#1{#2#3}{\int}$ }
		\vcenter{\hbox{$#2#3$ }}\kern-.6\wd0}}
\begin{document}
	{\vspace{0.01in}
		\title
		{A qualitative study of $(p,q)$ Singular parabolic equations: local existence, Sobolev regularity and asymptotic behaviour}

		\author{  Jacques Giacomoni$^{\,1}$ \footnote{e-mail: {\tt jacques.giacomoni@univ-pau.fr}}, \ Deepak Kumar$^{\,2}$\footnote{e-mail: {\tt deepak.kr0894@gmail.com}},  \
			and \  K. Sreenadh$^{\,2}$\footnote{
				e-mail: {\tt sreenadh@maths.iitd.ac.in}} \\
			\\ $^1\,${\small Universit\'e  de Pau et des Pays de l'Adour, LMAP (UMR E2S-UPPA CNRS 5142) }\\ {\small Bat. IPRA, Avenue de l'Universit\'e F-64013 Pau, France}\\  
			$^2\,${\small Department of Mathematics, Indian Institute of Technology Delhi,}\\
			{\small	Hauz Khaz, New Delhi-110016, India } }

		\date{}
		
		\maketitle

\begin{abstract}
 The purpose of the article is to study the existence, regularity, stabilization and blow up results of weak solution to the following parabolic $(p,q)$-singular equation:
	\begin{equation*}
	  (P_t)\; \left\{\begin{array}{rllll}
	    u_t-\Delta_{p}u -\Delta_{q}u & = \vth \; u^{-\de}+ f(x,u), \; u>0 \text{ in } \Om\times (0,T), \\ u&=0 \quad \text{ on }  \pa\Om\times (0,T), \\
	    u(x,0)&= u_0(x) \; \text{ in }\Om,
	   \end{array}
		\right.
	\end{equation*}
  where $\Om$ is a bounded domain in $\mathbb{R}^N$ with $C^2$ boundary $\pa\Om$, $1<q<p< \infty$, $0<\de, T>0$, $N\ge 2$ and $\vth>0$ is a parameter. Moreover, we assume that $f:\Om\times [0,\infty) \to \mb R$ is a bounded below Carath\'eodory function, locally Lipschitz with respect to the second variable uniformly in $x\in\Om$ and $u_0\in L^\infty(\Om)\cap W^{1,p}_0(\Om)$. We distinguish the cases as $q$-subhomogeneous and $q$-superhomogeneous depending on the growth of $f$ (hereafter we will drop the term $q$). In the subhomogeneous case,  we prove the existence and uniqueness of the weak solution to problem $(P_t)$ for $\de<2+1/(p-1)$. For this, we first study the stationary problems corresponding to $(P_t)$ by using  the method of sub and super solutions and subsequently employing implicit Euler method, we obtain the existence of a solution to $(P_t)$. Furthermore, in this case,
  we prove the stabilization result, that is, the solution $u(t)$ of $(P_t)$ converges to $u_\infty$, the unique solution to the stationary problem, in $L^\infty(\Om)$ as $t\ra\infty$.  For the superhomogeneous case, we prove the local existence theorem by taking help of nonlinear semigroup theory. Subsequently, we prove finite time blow up of solution to problem $(P_t)$ for small parameter $\vartheta>0$ in the case $\de\leq 1$ and for all $\vth>0$ in the case $\de>1$. Moreover, we prove higher Sobolev integrability of the solution to purely singular problem corresponding to the steady state of $(P_t)$, which is of independent interest. As a consequence of this, we  improve the regularity of solution to $(P_t)$ for the case $\de<2+1/(p-1)$.
  \medskip
			
 \noi \textbf{Key words:} $(p,q)$-Parabolic equations, non-homogeneous elliptic operator, singular nonlinearity, existence and stabilization of weak solution, nonlinear semigroup theory, blow up result,  regularity results.
 \medskip
 
  \noi \textit{2010 Mathematics Subject Classification:} 35K92, 35K59, 35J20, 35J62
		
\end{abstract}
  \bigskip

\section{Introduction}
  Let $\Om$ be a bounded domain in $\mathbb{R}^N$ $(N\ge 2)$, with $C^2$ boundary $\pa\Om$, and let $T>0$ with $Q_T:=\Om\times (0,T)$ and $\Ga_T:=\pa\Om\times (0,T)$. In this paper, we are interested in the study of following $(p,q)$-parabolic equation involving singular nonlinearity,
  \begin{equation*}
    (P_t)\; \left\{\begin{array}{rllll}
     u_t-\Delta_{p}u -\Delta_{q}u & = \vth \; u^{-\de}+ f(x,u), \; u>0 \text{ in } Q_T, \\ u&=0 \quad \text{ on }  \Ga_T, \\
     u(x,0)&= u_0(x) \; \text{ in }\Om,
    \end{array}
    \right.
  \end{equation*}
 where $1<q<p< \infty$, $\de>0$ and $\vth>0$ is a parameter. Furthermore, we assume that $f:\Om\times [0,\infty) \to \mb R$ is a bounded below Carath\'eodory function, the map $s\mapsto f(x,s)$ is locally Lipschitz uniformly in $x\in\Om$ and $u_0\in L^\infty(\Om)\cap W^{1,p}_0(\Om)$ is a nonnegative function. $\Delta_{p}$ is the $p$-Laplace operator, defined as 
 $\Delta_{p} u= \nabla\cdot(|\nabla u|^{p-2}\nabla u)$.
 The operator $A_{p,q}:=-\Delta_{p}-\Delta_{q}$ is known as $(p,q)$-Laplacian.\par 
 The motivation to study problem $(P_t)$ comes from its large variety of applications in the physical world. For instance, when $p=q=2$, it has applications in non-Newtonian fluids, in particular pseudoplastic fluids, in boundary layer phenomena for viscous fluids, in the Langmuir-Hinshelwood model of chemical heterogeneous catalyst kinetics, in enzymatic kinetic models, as well as in the theory of heat conduction in electrically conducting materials and in the study of guided modes of an electromagnetic field in the nonlinear medium. For $p\neq 2$, it arises from the study of turbulent flow of gas in porous media, see \cite{badra} for a brief introduction to this topic. For further details, we refer to the survey of Hern\'andez-Mancebo \cite{hernand} and the book of Ghergu-R\v adulescu \cite{ghergu}. 
 For $p\neq q$, that is, the equations of kind $(P_t)$  arise in the form of general reaction-diffusion equation:
 \begin{equation}\label{prb1}
 u_t= \mathrm{div} [A(u)\nabla u]+ r(x,u),
 \end{equation}
 where $A(u)= |\nabla u|^{p-2}+|\nabla u|^{q-2}$. The problem \eqref{prb1} has wide range of important applications in biophysics, plasma physics and chemical reactions, where the function $u$ corresponds to the concentration term, the first term on the right hand side represents diffusion with a diffusion coefficient $A(u)$ and the second term is the reaction which relates to sources and loss processes.  For more details,  readers are referred to  \cite{cherfils, maranorcn} and references therein. \par
 The evolution equations of type $(P_t)$ and corresponding stationary problems have been explored extensively by researchers in recent decades when the differential operator is homogeneous in nature, that is, $p=q$. In particular, when $p=q=2$, the seminal work of Crandall et al. in \cite{crandall} is considered as the beginning of vast research on stationary problems involving singular nonlinearity. Interested reader may refer to the work of Boccardo and Orsina \cite{bocardo} and the bibliography of Hern\'andez and Mancebo \cite{hernand}, and for the case $p=q\neq 2$ to the work of Giacomoni et al. \cite{bough,giacomoni}. The case of stationary problems with singular nonlinearity and nonhomogeneous elliptic operators have very recent history, for instance see the contributions of R\v adulescu et al. \cite{dpk,papageorg} for the case $\de<1$  and of Giacomoni et al. \cite{JDS} for all $\de>0$.\par
 Turning to the parabolic problems involving singular nonlinearity, for $p=q=2$, Tak\'a\v{c} in \cite{takac}, studied the stabilization results for the semilinear parabolic problem of the type $(P_t)$, that is, the solution $u(t)$ converges to the solution of steady state problem in $C^1(\ov\Om)$, as $t\ra\infty$. For the case $p\neq 2$, Badra et al. in \cite{badra}, De Bonis et al. in \cite{debonis} and Bougherara et al. in \cite{boughP} studied parabolic equation involving $p$-Laplacian and singular nonlinearity, almost simultaneously. In \cite{badra}, authors studied problem $(P_t)$ with $p=q$ and the term $f(x,u)$ exhibiting the sub-homogeneous growth w.r.t. $p$ in the variable $u$. In this work, among other results, authors proved the existence of unique solution to $(P_t)$ when $\de<2+1/(p-1)$ by semi-discretization in time and using implicit Euler method. Furthermore, they proved the stabilization result under some additional assumptions on $f$ and the initial condition $u_0$. While authors in \cite{debonis} considered the nonlinear term in $(P_t)$ as $f(x,t)(u^{-\de}+1)$ with $p=q$, $\de>0$ and $0\le f\in L^r(0,T;L^m(\Om))$ where $\frac{1}{r}+\frac{N}{pm}<1$. Here, authors proved the existence of a positive solution. Subsequently, authors in \cite{boughP} studied problem $(P_t)$ when the function $f$ has a form of $f(x,u,\na u)$ and it exhibit super-homogeneous growth w.r.t. $p$ in $u$ variable. Assuming $u_0\in L^r(\Om)$, for some large $r$, authors proved the existence of a solution $u$ in $C^{0,\al}_{loc}(Q_T)$ such that $u^{(r-2+p)/p}\in L^p(0,T; W^{1,p}_0(\Om))$, for all $\de>0$, as the limit of solution to some auxiliary problem.\par
 For the case when the differential operator in problem $(P_t)$ is non-homogeneous in nature, we mention the contributions of B\"ogelein et al. in \cite{bogel}, Cai and Zhou in \cite{cai}, Baroni and Lindfors in  \cite{baroni}, and references therein. In \cite{baroni},
  authors considered the following kind of general quasilinear parabolic problem: 
  \begin{align}\label{OrlPr}
    u_t-\mathrm{div} A(\na u)=0 \ \mbox{ on }\Om_T; \quad u=\psi \ \mbox{ on }\pa_p\Om_T,
  \end{align}
 where $A:\mb R^N\to \mb R^N$ is a $C^1$ vector field exhibiting Orlicz type growth conditions
 and $\psi$ is a continuous function on the parabolic boundary $\pa_p\Om_T$. Here, authors proved the existence of a solution to \eqref{OrlPr}, which is continuous up to the boundary and local boundedness of its gradient. While in \cite{bogel,cai}, authors have proved the existence of solution for similar equations as in \eqref{OrlPr} with differential operator exhibiting Orlicz type growth.
 Diening et al. in \cite{dien} discussed the local regularity results for solution of problem of the type \eqref{OrlPr}. The study of parabolic problems involving nonhomogeneous operator and singular nonlinearity was still open after these works.\par
 
 Coming back to our paper, here we obtain the existence, uniqueness, regularity results and  asymptotic behaviour of the solution $u$ to problem $(P_t)$ depending on the growth of the nonlinear term $f$.  For the existence part in the sub-homogeneous case, we employ the technique of time discretization and implicit Euler method together with existence and regularity results of associated stationary problems (see $(S_\la)$ in Sec. 2). This also generalizes the work of \cite{badra} to nonhomogeneous differential operators case. 
 The novelty of the paper is the study of equations with combined effect of  the singular nonlinear terms and the non-homogeneous nature of the leading differential operator.  One of the main contribtion  of the paper is the construction of appropriate sub and super solutions of stationary problems corresponding to problem $(P_t)$ which are comparable to the initial condition $u_0$. In case of homogeneous operators, e.g., the operator $-\De_p$ (the $p$-Laplacian), it is well known that  the scalar multiple of $\phi_1$, the first eigenfunction of $-\De_p$ with zero Dirichlet boundary condition, can be used to construct sub and super solutions.
 Since the operator is not homogeneous in our case, we introduce a parameter in front of the singular term in the problem of type $(PS)$ (Sec. 2) and study the behaviour of the solution as the parameter goes to $0$ or $\infty$ by using the recently developed regularity results for purely singular problems in \cite{JDS}. Precisely, in Propositions \ref{sup} and \ref{prop3}, we prove that the solution $u_\rho$ converges uniformly to $0$, as $\rho\ra 0$ and to $\infty$, as $\rho\ra\infty$. These results help us to construct small subsolutions and large super solutions, (see \eqref{eq78}, \eqref{eq79}) of stationary problems of kinds $(S_\la)$ and $(P)$, that are comparable with initial condition $u_0$.  
Once the sub and super solutions with aforementioned properties are obtained, the method of time discretization and implicit Euler method similar to \cite{badra} applies to our case too (with some modifications). To establish the uniqueness result, we prove a comparison principle as in Theorem \ref{cmp} using D\'az-S\'aa inequality and consequently, using the local Lipschitz nature of $f$, we have the uniqueness of weak solution of $(P_t)$. Furthermore, using semigroup theory, we prove the stabilization result under some assumptions on the initial data, that is, the solution $u(t)$ converges to $u_\infty$, the solution of steady state problem $(P)$, in $L^\infty(\Om)$ as $t\ra\infty$.\par
At the end of Section 3, we prove the higher Sobolev integrability for the  solution of purely singular problem $(PS)$, as in Theorem \ref{sob-int}, which is of independent interest. Precisely, we prove the (unique) solution  to problem $(PS)$ belongs to $W^{1,m}_0(\Om)$ for $m<\frac{p-1+\de}{\de-1}$ for all $\de>1$ in the range $(p-2)\de<2(p-1)$. That is, for the case $1<p\le 2$, the result is true for all $\de>1$ and for $p>2$, it is true for $\de<2+2/(p-2)$. The significance of the result can be understood as follows.
In \cite{JDS}, it is proved that the solution $u$ of problem $(PS)$ is in $C^{1,\al}(\ov\Om)$, for some $\al\in(0,1)$ and thus in all $W^{1,m}_0(\Om)$ when $0<\de<1$. For the case $\de\ge 1$, $u$ is only $C^{0,\al}(\ov\Om)$ regular with $u\in W^{1,p}_0(\Om)$ if and only if $\de<2+1/(p-1)$ (see \cite[Theorems 1.4,1.7,1.8]{JDS}). Thus, the above mentioned regularity result of Theorem \ref{sob-int} improves the integrability condition of the gradient as $u\in W^{1,m}_0(\Om)$ for $m<(p-1+\de)/(\de-1)$, clearly which is bigger than $p$ in the case of $\de<2+1/(p-1)$, whereas for the case $2+1/(p-1)\leq \de <2+2/(p-2)$, it wasn't known in the previous works whether $u$ is in some Sobolev space. To prove the Theorem, using the approach of DiBenedetto and Manfredi \cite{dibMan} (see also \cite{caff}) we first prove higher integrability result for solution of some nonhomogeneous equation (involving a more general quasilinear operator) whose right hand side is the divergence of some vector field (see \eqref{eq81}). Then, we suitably choose this function satisfying the equation $-\Delta w= u^{-\de}+b(x)$, for $b\in L^\infty(\Om)$, with the help of Green function and thus complete the proof.  Subsequently, we use the higher Sobolev integrability result of Theorem \ref{sob-int} to improve the regularity of solution to $(P_t)$ as in corollary \ref{cor3}. Precisely, under some additional assumption on $u_0$, we prove that the solution $u\in C([0,T];W^{1,m}_0(\Om))$ for all $p\le m<(p-1+\de)/(\de-1)$. To the best of our knowledge, these regularity results are new even for the equations involving $p$-Laplacian operator with singular nonlinearities.\par
Concerning the case of super-homogeneous growth, we apply nonlinear semigroup theory and Picard iteration process to obtain first a solution to the problem $(P_t)$ where the nonlinear term is replaced by $f(v)$, for $v\in L^\infty(Q_T)$. Then, applying Banach fixed point theorem, we get the local existence result as in Theorem \ref{thm7}. For the blow up result, the available literature deals with only the equations involving only superhomogeneous nonlinearity. In this work, we generalise the previous works of blow up phenomena to the equations where the nonlinear term involves singularity. 
The main difficulty in this regard is presence of the singular term in the equation, which causes non differentiability of the energy functional. We overcome this difficulty by establishing a suitable comparison of the solution to a sufficiently regular function (see lemma \ref{lem9}). Additional difficulty is that, for the case $\de<1$, the singular nonlinearity makes the energy functional to exhibit concave-convex type growth which forces us to prove the blow up behaviour only for a small parameter. Using the energy method and concavity method, we prove that the solution $u$ of problem $(P_t)$ blows up in the sense that $\| u(t)\|_{L^\infty(\Om)}\ra\infty$ as $t\ra\infty$, for a small parameter $\vth>0$ (its threshold value depends only on the initial data) in the case $\de\leq 1$ while for all $\vth>0$ in the case $\de>1$.   To the best of our knowledge, there is no work dealing with the blow up phenomenon for equation involving singular as well as super-homogeneous nonlinearity even for the case $p=q$ (that is equations involving only homogeneous operator $-\De_p$).

%
 
 \section{Main results}
 In this section, we state our main results concerning the existence, uniqueness, regularity and asymptotic behaviour of weak solutions of problem $(P_t)$. Here we also discuss several results regarding the stationary problems which are used to prove the aforementioned properties of solution to $(P_t)$. Based on the growth rate of the nonlinear term $f$, we distinguish the two cases as sub-homogeneous problem and super-homogeneous problem. \\
 For the sub-homogeneous growth condition, we assume that the function $f$ satisfies the following:
 \begin{enumerate}
  \item[$(f_1)$] $0\le\lim_{s\ra\infty}\frac{f(x,s)}{s^{q-1}}:=\al_f<\infty$, and
  \item[$(f_2)$] the map $s\mapsto\frac{f(x,s)}{s^{q-1}}$ is nonincreasing in $\mb R^+$ for a.e. $x\in\Om$.
 \end{enumerate}
 We first study the following general form of problem $(P_t)$,
  \begin{equation*}
    (G_t)\; \left\{\begin{array}{rllll}
     u_t-\Delta_{p}u -\Delta_{q}u & =\vth \; u^{-\de}+ g(x,t), \; u>0 \text{ in } Q_T, \\ u&=0 \quad \text{ on }  \Ga_T, \\
     u(x,0)&= u_0(x) \; \text{ in }\Om,
    \end{array}
    \right.
  \end{equation*}
 where $g\in L^\infty(Q_T)$. 
 \begin{Definition}
  Let $\mc V(Q_T): = \big\{ u\in L^\infty(Q_T)\; : \: u_t\in L^2(Q_T) \mbox{ and } u\in L^\infty\big(0,T; W^{1,p}_0(\Om)\big)  \big\}$ and 
  for $u\in\mc V(Q_T)$, we say that $u>0$ in $Q_T$ if for any compact set $K\subset Q_T$, $\textnormal{ess}\inf_K u>0$.
 \end{Definition}
 We define the notion of weak solution to problem $(G_t)$ as follows.
 \begin{Definition}
 A function $u\in\mc V(Q_T)$ is said to be a weak solution of $(G_t)$ if $u>0$ in $Q_T$, $u(x,0)=u_0(x)$ a.e. in $\Om$ and it satisfies 
  \begin{align*}
  	\int_{Q_T} \left( \phi\frac{\pa u}{\pa t}+\big(|\na u|^{p-2}+ |\na u|^{q-2} \big)\na u\na\phi -\big(\vth \; u^{-\de}+ g(x,t) \big)\phi \right)dx dt=0,
  \end{align*}
  for all $\phi\in\mc V(Q_T)$.
 \end{Definition}
\begin{Definition}
 We define the conical shell $\mc C_\de\subset L^\infty(\Om)$ as the set of functions $u$ satisfying the following in $\Om$, 
 \[ c_1 \varphi_\de(d(x)) \leq u(x) \leq c_2 \varphi_\de(d(x)), \]
  where $d(x):= \textnormal{dist}(x,\pa\Om)$, $c_1,c_2>0$ are constants and for $A>0$ large enough, the function $\varphi_\de:[0,\infty)\to [0,\infty)$ is defined as
 \begin{align*}
	\varphi_\de(s):=\begin{cases}
	s\qquad \mbox{if }\de<1,\\
	s\log^{1/p}\Big(\frac{A}{s}\Big) \quad \mbox{if }\de=1, \\
	s^\frac{p}{p-1+\de} \quad \mbox{if }\de>1.
	\end{cases}
	\end{align*}
 \end{Definition}
 We follow the approach similar to \cite{badra}, to obtain the existence result for $(G_t)$. In this regard, we consider the following stationary problem:  for $h\in L^\infty(\Om)$ and $\la>0$,
  \begin{equation*}
    (S_\la)\; \left\{\begin{array}{rllll}
      u-\la\big(\Delta_{p}u +\Delta_{q}u+ \vth \; u^{-\de}\big) & = h, \; u>0 \text{ in } \Om, \\ u&=0 \quad \text{ on }  \pa\Om.
     \end{array}
    \right.
  \end{equation*}
  \begin{Theorem}\label{thm1}
    Let $h\in L^\infty(\Om)$ and $0<\de<2+1/(p-1)$. Then, for any $\la,\vth>0$, problem $(S_\la)$ admits a unique solution $u_\la\in W^{1,p}_0(\Om)\cap\mc C_{\de}\cap C_0(\ov\Om)$.
 \end{Theorem}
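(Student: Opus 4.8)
The plan is to establish existence via the method of sub- and super-solutions, combined with a monotone iteration (or fixed-point) scheme, and then treat uniqueness through a comparison argument. The equation $(S_\la)$ rewrites as $u - \la A_{p,q} u - \la\vth u^{-\de} = h$, so I would define the solution operator whose fixed points solve $(S_\la)$, and seek a weak solution in $W^{1,p}_0(\Om)$ squeezed between an ordered pair of sub/super solutions lying in the conical shell $\mc C_\de$.

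First I would construct the ordered pair. As the introduction indicates, the key device is to use the solutions $u_\rho$ of the purely singular problem $(PS)$ with a parameter $\rho$ in front of the singular term, invoking the limiting behaviour of Propositions \ref{sup} and \ref{prop3}: $u_\rho\to 0$ uniformly as $\rho\ra 0$ and $u_\rho\to\infty$ as $\rho\ra\infty$. I would choose $\rho$ small to produce a subsolution $\ul u$ and $\rho$ large to produce a supersolution $\ov u$, both comparable to $\varphi_\de(d(x))$ and hence in $\mc C_\de$, with $\ul u\le\ov u$. The regularity theory of \cite{JDS} guarantees these belong to $C_0(\ov\Om)$ and control their boundary behaviour precisely through $\varphi_\de$, which is exactly what places the eventual solution in $\mc C_\de$.

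Next I would run the iteration. Because the map $s\mapsto \vth s^{-\de}$ is singular and decreasing, the operator is not monotone in the naive sense, so I would regularize the singular term (replace $u^{-\de}$ by $(u+1/n)^{-\de}$ or truncate near zero) to obtain a well-posed, coercive problem to which standard variational/monotone-operator methods apply. Solving the regularized problem inside the interval $[\ul u,\ov u]$ yields approximate solutions $u_n$; the $\mc C_\de$-bounds give uniform lower barriers $u_n\ge\ul u\ge c\,\varphi_\de(d(x))>0$ on compact subsets, which keep the singular term $u_n^{-\de}$ locally bounded in $L^1$ and prevent degeneration at the boundary. I would then pass to the limit $n\to\infty$, extracting a weakly convergent subsequence in $W^{1,p}_0(\Om)$, using the a priori $\mc C_\de$-bounds to justify passage to the limit in the singular term via dominated convergence, and the monotonicity of the $(p,q)$-operator (Minty's trick) to identify the limit of the nonlinear flux. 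This produces a weak solution $u_\la\in W^{1,p}_0(\Om)\cap\mc C_\de\cap C_0(\ov\Om)$, the last membership following from the $C^{0,\al}(\ov\Om)$-regularity of \cite{JDS}.

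Finally, uniqueness. Since the left-hand side contains the strictly monotone zeroth-order term $u$ together with the singular term $-\la\vth u^{-\de}$ (which is decreasing in $u$) and the monotone operator $-\la A_{p,q}$, the full nonlinearity $u\mapsto u-\la A_{p,q}u-\la\vth u^{-\de}$ is strictly monotone. Concretely, for two solutions $u_1,u_2$ I would test the difference of the two equations with $(u_1-u_2)^+$ (and symmetrically), using the elementary monotonicity inequality for the $p$- and $q$-Laplacians and the sign of $\big(u_1^{-\de}-u_2^{-\de}\big)(u_1-u_2)^+\le 0$; the zeroth-order term then forces $(u_1-u_2)^+=0$. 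The hard part will be the construction of the ordered sub/super solutions within $\mc C_\de$ that are genuinely comparable and yield the sharp boundary profile $\varphi_\de$ across all three regimes $\de<1,\ =1,\ >1$ (the restriction $\de<2+1/(p-1)$ enters precisely to keep $\ov u\in W^{1,p}_0(\Om)$), together with the delicate limit passage in the singular term, where the $\mc C_\de$ lower bound is what makes the argument work.
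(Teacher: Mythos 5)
Your proposal matches the paper's proof in all essentials: barriers built from the parameterized problems of Propositions \ref{sup} and \ref{prop3} (with the caveat that for $\de<1$ the paper's subsolution actually comes from the nonsingular problem \eqref{eqS} of Lemma \ref{lemsub}, not the singular one), a solution trapped between them lying in $W^{1,p}_0(\Om)\cap\mc C_\de\cap C_0(\ov\Om)$, the restriction $\de<2+1/(p-1)$ entering exactly as you say, and uniqueness from the strict monotonicity of $u\mapsto u-\la\De_p u-\la\De_q u-\la\vth u^{-\de}$. The only methodological difference is that the paper produces the solution by direct minimization of the associated energy functional --- with the singular term truncated at the subsolution level and G\^ateaux differentiability supplied by \cite[Lemma A2]{giacomoni} for $\de<1$, and a truncation argument plus the weak comparison principle of \cite[Theorem 1.5]{JDS} for $1\le\de<2+1/(p-1)$ --- rather than by your $\e$-regularization plus Minty passage, which is an equivalent and equally standard device.
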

\begin{Definition}
  For the case $\de\ge 2+1/(p-1)$,  $u\in W^{1,p}_{loc}(\Om)$ is said to be a weak solution of problem $(S_\la)$ if 
	\begin{align*}
	\int_{\Om} \big(u\phi+\la|\na u|^{p-2}\na u\na\phi+ \la|\na u|^{q-2}\na u\na\phi\big)dx =\int_{\Om} \big(\la \vth \;u^{-\de}+ h \big)\phi~dx,
	\end{align*}
  for all $\phi\in C_c^\infty(\Om)$ and the Dirichlet datum is understood in the sense that there exists $\nu\ge 1$ such that $u^\nu\in W^{1,p}_0(\Om)$. 
\end{Definition}
\begin{Theorem}\label{thm3}
Let $\de\ge 2+1/(p-1)$ and $h\in L^\infty(\Om)$. Then, for any $\la,\vth>0$, there exists a weak solution $u\in W^{1,p}_{loc}(\Om)\cap\mc C_\de\cap C_0(\ov\Om)$ of problem $(S_\la)$ such that $u\not\in W^{1,p}_0(\Om)$.	
\end{Theorem}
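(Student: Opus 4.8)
The plan is to construct $u$ as the monotone limit of solutions to problems in which the singular term has been truncated, and then to read off its conical behaviour, its boundary value and the failure of global $W^{1,p}$-integrability from uniform bounds in $\mc C_\de$.

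\textbf{Truncation.} For $n\in\mb N$ I would replace $\vth s^{-\de}$ by the bounded nonlinearity $\vth(s+1/n)^{-\de}$ and solve
\[ u_n-\la\Delta_p u_n-\la\Delta_q u_n=\la\vth(u_n+1/n)^{-\de}+h \ \text{ in }\Om,\quad u_n=0 \ \text{ on }\pa\Om. \]
For each fixed $n$ the right-hand side is bounded by $\la\vth n^\de+\|h\|_{L^\infty(\Om)}$, so this is a standard $(p,q)$-Laplacian problem with bounded data; existence of $u_n\in W^{1,p}_0(\Om)\cap C_0(\ov\Om)$ follows from the monotone operator / sub- and super-solution method already used for $(S_\la)$ in Theorem \ref{thm1}. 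Since $s\mapsto\vth(s+1/n)^{-\de}$ increases with $n$, the comparison principle for the monotone operator $I-\la\Delta_p-\la\Delta_q$ gives $u_n\le u_{n+1}$, so $\{u_n\}$ is increasing.

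\textbf{Uniform conical bounds.} The decisive step is to bound $\{u_n\}$ in $\mc C_\de$ uniformly in $n$. Using the profile $\varphi_\de(d(x))=d(x)^{p/(p-1+\de)}$ together with the limiting behaviour of the purely singular solutions $u_\rho$ obtained in Propositions \ref{sup} and \ref{prop3}, I would build $n$-independent sub- and super-solutions $c_1\varphi_\de(d(x))$ and $c_2\varphi_\de(d(x))$, yielding $c_1\varphi_\de(d(x))\le u_n\le c_2\varphi_\de(d(x))$ in $\Om$, uniformly in $n$. Verifying that $c\,\varphi_\de(d)$ is a sub-/super-solution is the main obstacle: the non-homogeneity forces one to estimate $\Delta_p$ and $\Delta_q$ acting on the same boundary profile and to check that the $p$-Laplacian term dominates and balances the singular term $\varphi_\de(d)^{-\de}$ near $\pa\Om$, which relies on the sharp boundary asymptotics of \cite{JDS}.

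\textbf{Passage to the limit.} On every compact $K\Subset\Om$ the lower bound gives $u_n\ge c_1\min_K\varphi_\de(d)>0$, hence $(u_n+1/n)^{-\de}$ is bounded on $K$ uniformly in $n$; testing against interior cut-offs then produces uniform $W^{1,p}_{loc}(\Om)$ estimates. Passing to the pointwise monotone limit $u=\lim_n u_n$, using local compactness with a Minty-type monotonicity argument to identify the limits of the operator terms, and monotone convergence for the singular term (controlled from below by $u\ge c_1\varphi_\de(d)$), I obtain $u\in W^{1,p}_{loc}(\Om)$ satisfying the weak formulation against all $\phi\in C_c^\infty(\Om)$. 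The upper bound $u\le c_2\varphi_\de(d)$ combined with interior regularity gives $u\in C_0(\ov\Om)\cap\mc C_\de$.

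\textbf{Dirichlet datum and non-membership in $W^{1,p}_0(\Om)$.} Since $u\asymp d^{p/(p-1+\de)}$ with matching gradient bounds from \cite{JDS}, the power $u^\nu$ has gradient $\asymp d^{\,\nu p/(p-1+\de)-1}$ near $\pa\Om$, which lies in $L^p$ as soon as $\nu>(p-1)(p-1+\de)/p^2$; choosing such a $\nu\ge 1$ realises the boundary datum in the required weak sense. Finally, suppose for contradiction that $u\in W^{1,p}_0(\Om)$. Testing the weak formulation with the truncations $T_k(u)=\min(u,k)\in W^{1,p}_0(\Om)\cap L^\infty(\Om)$ and letting $k\ra\infty$, the left-hand side stays bounded by $\int_\Om u^2+\la\int_\Om(|\na u|^p+|\na u|^q)+\int_\Om|h|\,u<\infty$, whereas by monotone convergence $\la\vth\int_\Om u^{-\de}T_k(u)\ra\la\vth\int_\Om u^{1-\de}$. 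A direct computation using $u\asymp d^{p/(p-1+\de)}$ shows $\int_\Om u^{1-\de}\,dx=+\infty$ precisely when $\de\ge 2+1/(p-1)$, giving a contradiction. Hence $u\notin W^{1,p}_0(\Om)$, which completes the proof.
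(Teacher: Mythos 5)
Your overall strategy is the paper's: the paper regularizes exactly as you do, solving the auxiliary problem $(S_\e)$ with nonlinearity $\vth(u_\e+\e)^{-\de}$ (your truncation with $\e=1/n$), derives two-sided boundary barriers of order $d^{p/(p-1+\de)}$, passes to a local limit using Lieberman's interior $C^{1}$ estimates, shows $u^\rho\in W^{1,p}_0(\Om)$ for $\rho>(p-1)(p-1+\de)/p^2$ to make sense of the Dirichlet datum, and gets $u\notin W^{1,p}_0(\Om)$ by contradiction from $\int_\Om u^{1-\de}=\infty$; your $T_k$-truncation argument and the exponent threshold computation agree with the paper's \eqref{eq60}-based contradiction.

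There is, however, one genuine error in your barrier step: the claimed $n$-independent lower bound $c_1\varphi_\de(d)\le u_n$ is false for every fixed $n$. Since the truncated right-hand side is bounded by $\la\vth n^\de+\|h\|_{L^\infty(\Om)}$, each $u_n$ lies in $C^{1,\sigma}(\ov\Om)$ and so $u_n\le C_n\, d$ near $\pa\Om$; but $\de\ge 2+1/(p-1)>1$ forces $\tau:=p/(p-1+\de)<1$, whence $\varphi_\de(d)=d^{\tau}\gg d$ as $d\to 0$ and no $n$-uniform $c_1>0$ can exist. At the PDE level the same failure is visible: near $\pa\Om$ one has $-\De_p(c_1 d^{\tau})\sim d^{-\tau\de}\to\infty$, while the truncated nonlinearity is bounded by $\la\vth n^{\de}$, so the subsolution inequality breaks in a boundary strip. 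This is precisely why the paper uses the $\e$-perturbed barriers of \cite[Lemma 2.2]{JDS},
\begin{align*}
\ul u_\e=\eta\big(\big(d+\e^{\frac{p-1+\de}{p}}\big)^{\frac{p}{p-1+\de}}-\e\big), \qquad
\ov u_\e=\Ga\big(\big(d+\e^{\frac{p-1+\de}{p}}\big)^{\frac{p}{p-1+\de}}-\e\big),
\end{align*}
which are linear in $d$ at scale $\e$ near the boundary (hence admissible for the regularized problem, and also securing positivity of $u_\e$ uniformly even when $h$ is very negative — a point your appeal to Theorem \ref{thm1}, which is stated only for $\de<2+1/(p-1)$, leaves open) and converge to $\eta d^{\tau}$, $\Ga d^{\tau}$ as $\e\to 0$, so the conical estimate for $u$ is obtained only in the limit. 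Your interior $W^{1,p}_{loc}$ estimates, as written, invoke the false uniform lower bound; they can be repaired either by your monotonicity $u_n\ge u_{n_0}$ (once positivity of some $u_{n_0}$ is secured via the perturbed subsolution), or, as the paper does, by testing with $u_\e^{\ga}$, $\ga\ge\de$, to get uniform global bounds on $u_\e^{\rho}$ in $W^{1,p}_0(\Om)$, avoiding cutoffs altogether. With this correction the remainder of your plan matches the published proof.
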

For the stabilization result, we study the following stationary problem associated to $(P_t)$,
\begin{equation*}
(P)\; \left\{\begin{array}{rllll}
-\Delta_{p}u -\Delta_{q}u & = \vth \; u^{-\de}+ f(x,u), \; u>0 \text{ in } \Om, \\ u&=0 \quad \text{ on }  \pa\Om. 
\end{array}
\right.
\end{equation*}
\begin{Theorem}\label{thm4}
	Let $0<\de<2+1/(p-1)$ and assume that (f1) and (f2) are satisfied. Then, for all $\vth>0$, there exists a unique solution $u_\infty$ of problem $(P)$ in $W^{1,p}_0(\Om)\cap\mc C_\de\cap C_0(\ov\Om)$.
\end{Theorem}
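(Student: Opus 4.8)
The plan is to obtain existence by the method of sub and super solutions and uniqueness by a Díaz--Saá type inequality; the two monotonicity hypotheses $(f_1)$ and $(f_2)$ are tailored precisely so that both steps go through. Throughout, write $g(x,s):=\vth\,s^{-\de}+f(x,s)$ for the full reaction term. Since $f$ is bounded below, say $f\ge -M$, and by $(f_1)$ satisfies $f(x,s)\le(\al_f+1)s^{q-1}$ for $s$ large (hence $f(x,s)\le C(1+s^{q-1})$ for all $s\ge 0$, using the local Lipschitz control near $s=0$), one has the two-sided bound $\vth s^{-\de}-M\le g(x,s)\le \vth s^{-\de}+C(1+s^{q-1})$, and this is all that enters the construction.

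First I would build an ordered pair of sub and super solutions lying in $\mc C_\de$ out of the purely singular problem $(PS)$ with a parameter $\rho$ in front of the singular term; let $u_\rho$ denote its (unique) solution. For the subsolution, take $\rho_1<\vth$ small. By the uniform convergence $u_\rho\ra 0$ as $\rho\ra 0$ (Propositions \ref{sup}, \ref{prop3}), the quantity $\|u_{\rho_1}\|_\infty$ is small, so $u_{\rho_1}^{-\de}$ is uniformly large and $(\vth-\rho_1)u_{\rho_1}^{-\de}\ge M$ pointwise; then $-\De_p u_{\rho_1}-\De_q u_{\rho_1}=\rho_1 u_{\rho_1}^{-\de}\le \vth u_{\rho_1}^{-\de}-M\le g(x,u_{\rho_1})$, so $\underline{u}:=u_{\rho_1}$ is a subsolution of $(P)$. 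For the supersolution, take $\rho_2$ large and set $\overline{u}:=u_{\rho_2}$; this is the delicate case, since the non-homogeneity of the operator rules out the rescaling used for $-\De_p$, forcing the use of the uniform blow-up $u_\rho\ra\infty$ as $\rho\ra\infty$ (Propositions \ref{sup}, \ref{prop3}). One must check $(\rho_2-\vth)t^{-\de}\ge C(1+t^{q-1})$ for all $t\in(0,\|u_{\rho_2}\|_\infty]$; as the left side is decreasing and the right side increasing in $t$, the binding point is $t=\|u_{\rho_2}\|_\infty$, where the operator response grows like a strictly larger power of $\rho_2$ than the $q$-homogeneous reaction (because $q-1<p-1$ and the leading $p$-Laplacian dominates for large data). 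Hence for $\rho_2$ large $\overline{u}$ is a supersolution; near $\pa\Om$, where $u_{\rho_2}\ra 0$, the singular term makes the left side blow up while $f$ stays bounded. Shrinking $\rho_1$ and enlarging $\rho_2$ further if needed gives $\underline{u}\le\overline{u}$, and both lie in $\mc C_\de$ with the common profile $\varphi_\de(d)$, differing only in the constants $c_1,c_2$; this is the content of \eqref{eq78}--\eqref{eq79}.

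Next I would produce a solution inside the order interval $[\underline{u},\overline{u}]$. To handle the singularity I would regularise, replacing $u^{-\de}$ by $(u+\tfrac1n)^{-\de}$, and invoke the standard sub and super solution theorem for the now non-singular, monotone $(p,q)$-operator to get $u_n\in[\underline{u},\overline{u}]$ solving the regularised problem. The uniform lower bound $u_n\ge\underline{u}\ge c_1\varphi_\de(d)$ controls $(u_n+\tfrac1n)^{-\de}$ locally uniformly, so compactness together with the $W^{1,p}_0(\Om)$ a priori bound, available exactly because $\de<2+1/(p-1)$ as in Theorem \ref{thm1}, lets me pass to the limit $n\ra\infty$ and recover a weak solution $u_\infty$ of $(P)$. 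The sandwiching $\underline{u}\le u_\infty\le\overline{u}$ then forces $u_\infty\in\mc C_\de$, whence $u_\infty\in W^{1,p}_0(\Om)\cap C_0(\ov\Om)$ by the regularity theory for singular $(p,q)$-problems.

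Finally, for uniqueness I would use a Díaz--Saá/Picone argument. By $(f_2)$, together with the fact that $s^{-\de}/s^{q-1}=s^{1-q-\de}$ is decreasing, the map $s\mapsto g(x,s)/s^{q-1}$ is nonincreasing. Given two solutions $u,v\in\mc C_\de$, I would test the equation for $u$ with $(u^q-v^q)/u^{q-1}$ and that for $v$ with $(v^q-u^q)/v^{q-1}$ (admissible since the $\mc C_\de$ bounds keep these ratios bounded up to $\pa\Om$), add, and combine the resulting inequality with the comparison principle recorded in Theorem \ref{cmp}; the sign of the reaction contribution, dictated by the monotonicity of $g(x,s)/s^{q-1}$, forces $u\equiv v$. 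I expect the main obstacle to be twofold: verifying the supersolution inequality globally (interior versus boundary) for the non-homogeneous operator, which is exactly why Propositions \ref{sup} and \ref{prop3} are indispensable, and establishing the Díaz--Saá inequality for the mixed $(p,q)$ operator, since the two distinct homogeneities prevent a single hidden-convexity substitution and require controlling the $p$- and $q$-Laplacian contributions simultaneously.
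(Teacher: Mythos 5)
Your overall architecture is the same as the paper's: sub and supersolutions built from the parameter-dependent singular problems of Propositions \ref{sup} and \ref{prop3}, a solution trapped in the order interval, and uniqueness via the D\'iaz--Sa\'a comparison principle of Theorem \ref{cmp} (which indeed applies since $s\mapsto \vth s^{1-q-\de}+f(x,s)/s^{q-1}$ is nonincreasing and the $\mc C_\de$ bounds give $\int_\Om u^{1-\de}<\infty$ exactly when $\de<2+1/(p-1)$). Your supersolution is a mild variant: the paper builds the full reaction $Mu^{-\de}+lu^{q-1}+L$ into the auxiliary problem (Proposition \ref{sup} for $\de<1$, equation \eqref{eq77} for $\de\ge1$) so the supersolution inequality is automatic with $l>m>\al_f$, whereas you take the purely singular problem with large $\rho_2$ and verify $(\rho_2-\vth)t^{-\de}\ge C(1+t^{q-1})$ by hand; your growth comparison is correct and rests on the same scaling $\|u_{\rho_2}\|_{L^\infty(\Om)}\le C\rho_2^{1/(p-1+\de)}$ that underlies the proofs of those propositions (since $(q-1+\de)/(p-1+\de)<1$). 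One imprecision: Proposition \ref{prop3} gives $u_\rho\ra0$ uniformly only on compact subsets of $\Om$, not $\|u_{\rho_1}\|_{L^\infty(\Om)}$ small; the pointwise bound $(\vth-\rho_1)u_{\rho_1}^{-\de}\ge M$ near $\pa\Om$ must instead come from the barrier \eqref{eq78}, which is exactly the two-region argument in the proofs of Theorems \ref{thm1} and \ref{thm2}.

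The genuine gap is in your middle step. After replacing $u^{-\de}$ by $(u+\tfrac1n)^{-\de}$, the pair $(\ul u,\ov u)$ is no longer an ordered sub/supersolution pair: since $(s+\tfrac1n)^{-\de}\le n^{\de}$, the regularised reaction is \emph{bounded} near $\pa\Om$, while $-\De_p\ul u-\De_q\ul u=\rho_1\ul u^{-\de}\ra\infty$ as $d(x)\ra0$. So $\ul u$ fails to be a subsolution of the regularised problem in a boundary strip, the order interval $[\ul u,\ov u]$ is not invariant, and the ``standard sub and super solution theorem'' does not deliver $u_n\ge\ul u$ as you claim. The repair requires $n$-dependent barriers of the type used in the proof of Theorem \ref{thm3}, namely $\ul u_{\e}=\eta\big((d+\e^{(p-1+\de)/p})^{p/(p-1+\de)}-\e\big)$, together with a separate argument (monotonicity of $u_n$ in $n$, or comparison in the limit) that the limit dominates $c_1\varphi_\de(d)$ — otherwise the uniform local control of $(u_n+\tfrac1n)^{-\de}$ you use for compactness is unjustified. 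Relatedly, since $f$ is only locally Lipschitz and not monotone, any sub-supersolution or iteration scheme needs the shift $s\mapsto f(x,s)+Ks$; the paper handles this via the constant $K$ in the scheme \eqref{eq63} for $\de\ge1$, and avoids regularisation altogether for $\de<1$ by truncation and global minimisation of a G\^ateaux-differentiable functional. With these repairs your route closes, but as written the existence step would fail.
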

Now, we state our result concerning higher Sobolev integrability.
\begin{Theorem}\label{sob-int}
 Let $\de>1$ and $u$ be the solution to the following problem:
 \begin{equation*}
  (PS)\left\{\begin{array}{rllll}
  -\De_p u -\De_q u& = u^{-\de}+b(x),  \quad u>0 \quad \mbox{in }\Om, \\
  u&=0 \quad \mbox{on }\pa\Om,
  \end{array}
  \right. 
 \end{equation*}
 where $b\in L^\infty(\Om)$. If $\de$ satisfies $(p-2)\de < 2(p-1)$, then $ u\in W_0^{1,m}(\Om)$ for all $m<\frac{p-1+\de}{\de-1}$.
\end{Theorem}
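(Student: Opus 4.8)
The plan is to localize the integrability problem to a boundary strip, to build a linear comparison potential whose gradient carries exactly the right (weighted) integrability, and then to transfer that integrability to $\nabla u$ through a Calder\'on--Zygmund estimate for the $(p,q)$-operator with divergence-form datum. First I would invoke the regularity already known for $(PS)$: since $u\in\mc C_\de$ we have $u(x)\asymp d(x)^{p/(p-1+\de)}$, and on every compact $K\subset\Om$ the solution is bounded away from $0$, so the operator is uniformly elliptic there and $u\in C^{1,\al}_{loc}(\Om)$; hence $|\na u|\in L^\infty_{loc}(\Om)$ and $\int_K|\na u|^m\,dx<\infty$. Thus it suffices to control $\na u$ in a neighbourhood $\Om_\eta=\{d(x)<\eta\}$ of $\pa\Om$. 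The cone bound gives $u^{-\de}\asymp d^{-\mu}$ with $\mu:=\frac{p\de}{p-1+\de}$, and an elementary rearrangement shows that the hypothesis $(p-2)\de<2(p-1)$ is \emph{exactly} the condition $\mu<2$.

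Next I would construct the auxiliary function $w$ solving $-\De w=u^{-\de}+b$ in $\Om$, $w=0$ on $\pa\Om$, via the Green function $w(x)=\int_\Om G(x,y)\big(u(y)^{-\de}+b(y)\big)\,dy$. Although $u^{-\de}\notin L^1(\Om)$ when $\de>1$, the boundary vanishing of $G$ supplies an extra factor $d(y)$, which upgrades the non-integrable $d(y)^{-\mu}$ to the integrable $d(y)^{1-\mu}$ precisely because $\mu<2$; this is the point where the standing hypothesis is used, and it guarantees that $w$ is well defined and bounded. Using $|\na_x G(x,y)|\lesssim|x-y|^{1-N}$ together with the boundary behaviour, I would prove the weighted gradient bound $|\na w(x)|\lesssim d(x)^{1-\mu}$ in $\Om_\eta$, so that $F:=\na w\in L^\sigma(\Om)$ for every $\sigma<\frac1{\mu-1}=\frac{p-1+\de}{(p-1)(\de-1)}$. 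To be fully rigorous I would carry this out for the truncated data $\min(u^{-\de},k)+b$, obtain the weighted estimate uniformly in $k$, and pass to the limit.

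Then comes the transfer step. Rewriting the equation as $-\De_p u-\De_q u=-\mathrm{div}(F)$, I would apply the auxiliary higher-integrability result (the ``step 1'' estimate, modelled on DiBenedetto--Manfredi \cite{dibMan} and Caffarelli--Peral \cite{caff}) for the $(p,q)$-Laplacian with divergence-form right-hand side: it yields $F\in L^\sigma\Rightarrow\na u\in L^{(p-1)\sigma}$. Heuristically, in $\Om_\eta$ one has $|\na u|\to\infty$, so the $p$-term dominates and the natural scaling $|\na u|^{p-1}\sim|F|$ governs, while the $q$-term is subcritical. Setting $(p-1)\sigma=m$ and letting $\sigma\uparrow\frac{p-1+\de}{(p-1)(\de-1)}$ gives $\na u\in L^m(\Om)$ for all $m<(p-1)\cdot\frac{p-1+\de}{(p-1)(\de-1)}=\frac{p-1+\de}{\de-1}$. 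Combined with the interior bound and with $u\in C_0(\ov\Om)$ (so the trace is zero), this upgrades to $u\in W^{1,m}_0(\Om)$, as claimed; note the exponents match sharply, consistent with $|\na u|\asymp d^{(1-\de)/(p-1+\de)}$ near $\pa\Om$.

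The hard part will be the transfer step, i.e.\ establishing the Calder\'on--Zygmund estimate \emph{up to the boundary} and for the possibly large exponents $m$ for the \emph{non-homogeneous} operator $-\De_p-\De_q$. Because this operator is not scale invariant (two competing homogeneities $p\neq q$), the comparison/freezing arguments and the Caffarelli--Peral perturbation scheme must be adapted, treating the $q$-term as a subcritical perturbation and flattening the boundary to localize. A secondary, more routine, obstacle is making the Green-potential estimate $|\na w|\lesssim d^{1-\mu}$ rigorous through the truncation-and-limit procedure; the exponent bookkeeping itself is clean and sharp, as the computations above show.
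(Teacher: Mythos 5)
Your proposal follows essentially the same route as the paper's proof: the cone estimate gives $u^{-\de}\asymp d^{-\mu}$ with $\mu=\frac{p\de}{p-1+\de}<2$ exactly under $(p-2)\de<2(p-1)$, the Green-function potential $w$ solving $-\De w=u^{-\de}+b$ with $w\asymp d^{2-\mu}$ and $|\na w|\lesssim d^{1-\mu}$ supplies the divergence-form datum $f=-\na w\in L^{m/(p-1)}(\Om;\mb R^N)$ for $m<\frac{p-1+\de}{\de-1}$, and the DiBenedetto--Manfredi/Caffarelli--Peral higher-integrability theorem transfers this to $\na u\in L^m(\Om)$, just as in the paper's Corollary \ref{cor2} and Theorem \ref{sob-int}. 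The only cosmetic differences are that your interior localization and truncation-in-$k$ step are refinements the paper skips, and the ``hard part'' you flag is already settled there by stating the Calder\'on--Zygmund result for general $A(x,z)$ under the $p$-growth and $p$-coercivity hypotheses (A1)--(A3), which the operator $A(\xi)=|\xi|^{p-2}\xi+|\xi|^{q-2}\xi$ satisfies outright since $q<p$, so no separate perturbation analysis of the $q$-term is required.
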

\begin{Remark}
 We remark that the result of Theorem \ref{sob-int} holds for equations involving a more general class of operators with singular nonlinearity as in $(PS)$, for example
 \begin{enumerate}
 	\item[(i)] The operator $-\textnormal{div}\big(|\na u|^{p-2}\na u + a(x) |\na u|^{q-2} \na u  \big)$, where $0\le a(x)\in W^{1,\infty}(\Om)\cap C(\ov\Om)$ with $1<q<p<\infty$.
 	\item[(ii)] The operator $-\textnormal{div}\big(a(x)|\na u|^{p-2}\na u +  |\na u|^{q-2} \na u  \big)$, where $0<\inf_{\bar\Om} a(x)\le a(x)\in  C^1(\ov\Om)$ with $1<q<p$.
 \end{enumerate}
\end{Remark}
Concerning the parabolic case, our first existence theorem in this regards is stated below.
\begin{Theorem}\label{thm2}
 Let $0<\de<2+1/(p-1)$, $g\in L^\infty(Q_T)$ and $u_0\in W^{1,p}_0(\Om)\cap\mc C_\de$. Then, for all $\vth>0$, there exists a unique solution $u$ to problem $(G_t)$ such that $u(t)\in\mc C_\de$ uniformly for $t\in[0,T]$. Moreover, $u\in C([0,T];W^{1,p}_0(\Om))$ and the following holds, for $\de\neq 1$, and for all $t\in[0,T]$,
	\begin{equation}\label{eq7}
	\begin{aligned}
	&\int_{0}^{t}\int_{\Om} \Big(\frac{\pa u}{\pa t}\Big)^2 dxd\tau+\frac{1}{p} \int_{\Om} |\na u|^p dx+ \frac{1}{q} \int_{\Om} |\na u|^q dx- \frac{\vth}{1-\de} \int_{\Om} u^{1-\de}dx \\
	&= \int_{0}^{t}\int_{\Om} g\frac{\pa u}{\pa t} dxd\tau+\frac{1}{p} \int_{\Om} |\na u_0|^p dx+ \frac{1}{q} \int_{\Om} |\na u_0|^q dx- \frac{\vth}{1-\de} \int_{\Om} u_0^{1-\de}dx.
	\end{aligned}
	\end{equation}
 (For $\de=1$, terms of the form $v^{1-\de}/(1-\de)$ are replaced by $\log v$ in the above expression).
\end{Theorem}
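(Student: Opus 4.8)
The plan is to construct the solution by semi-discretization in time (the Rothe / implicit Euler method), exactly as announced in the introduction, and then pass to the limit. Fix $n\in\mb N$, set $\tau=T/n$ and $t_k=k\tau$, and put $g_k:=\frac{1}{\tau}\int_{t_{k-1}}^{t_k}g(\cdot,s)\,ds\in L^\infty(\Om)$. Starting from $u_0$, I would define $u_k$ recursively as the solution of
\begin{equation*}
u_k-\tau\big(\Delta_p u_k+\Delta_q u_k+\vth\, u_k^{-\de}\big)=u_{k-1}+\tau g_k=:h_k .
\end{equation*}
This is precisely problem $(S_\la)$ with $\la=\tau$ and $h=h_k\in L^\infty(\Om)$, so Theorem \ref{thm1} (applicable since $\de<2+1/(p-1)$) gives a unique $u_k\in W^{1,p}_0(\Om)\cap\mc C_\de\cap C_0(\ov\Om)$ at each step. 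The first serious point is to obtain shell bounds $c_1\varphi_\de(d(x))\le u_k\le c_2\varphi_\de(d(x))$ with constants independent of $k$ and $n$. I would get these from the sub- and supersolutions comparable to $u_0$ (the functions $u_\rho$ of Propositions \ref{sup} and \ref{prop3}) together with a discrete comparison principle: if $u_{k-1}$ lies in the shell then so does $h_k$ up to controlled constants, and comparing $u_k$ against fixed sub/supersolutions of $(S_\tau)$ propagates the bounds. In particular this yields a uniform $L^\infty(Q_T)$ bound and, crucially, the uniform lower bound that tames the singular term.

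Next I would derive the energy estimates that drive the compactness. Testing the $k$-th equation with $u_k-u_{k-1}$ and using the convexity inequalities $|\na u_k|^{r-2}\na u_k\cdot\na(u_k-u_{k-1})\ge\frac1r\big(|\na u_k|^r-|\na u_{k-1}|^r\big)$ for $r=p,q$, together with the concavity of $t\mapsto\frac{\vth}{1-\de}t^{1-\de}$ (whose second derivative $-\vth\de\,t^{-\de-1}$ is negative for every $\de\neq1$), which gives $\vth u_k^{-\de}(u_k-u_{k-1})\le\frac{\vth}{1-\de}\big(u_k^{1-\de}-u_{k-1}^{1-\de}\big)$, and summing over $k=1,\dots,m$, I obtain a discrete energy inequality. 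The term $\int_\Om g_k(u_k-u_{k-1})$ is absorbed by Young's inequality into half of the time-difference term. Because the uniform shell bounds make $\int_\Om u_k^{1-\de}\,dx$ finite uniformly, this furnishes bounds, independent of $n$, on $\sum_k\tau\,\|(u_k-u_{k-1})/\tau\|_{L^2(\Om)}^2$, on $\max_k\|\na u_k\|_{L^p(\Om)}$ and on $\max_k\|\na u_k\|_{L^q(\Om)}$.

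I would then introduce the piecewise-constant interpolant $\bar u_n$ and the piecewise-affine interpolant $\hat u_n$ (so that $\partial_t\hat u_n=(u_k-u_{k-1})/\tau$ on each subinterval), which satisfy $\partial_t\hat u_n-\Delta_p\bar u_n-\Delta_q\bar u_n=\vth\bar u_n^{-\de}+\bar g_n$ weakly. The estimates give $\hat u_n$ bounded in $L^\infty(0,T;W^{1,p}_0(\Om))$, $\partial_t\hat u_n$ bounded in $L^2(Q_T)$, and $\|\bar u_n-\hat u_n\|_{L^2}\lesssim\tau\,\|\partial_t\hat u_n\|_{L^2}\to0$; an Aubin--Lions argument yields a subsequence with $\hat u_n,\bar u_n\to u$ strongly in $L^2(Q_T)$ and a.e., $\partial_t\hat u_n\rightharpoonup\partial_t u$ in $L^2(Q_T)$, and $\hat u_n\overset{*}{\rightharpoonup}u$ in $L^\infty(0,T;W^{1,p}_0(\Om))$. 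The main obstacle lies here, in the two nonlinear terms. For the $(p,q)$-Laplacian I expect to need strong convergence of gradients, obtained by a Minty-type monotonicity argument (testing with $\bar u_n-u$ and exploiting monotonicity of $A_{p,q}$) or equivalently by proving $\na\bar u_n\to\na u$ a.e.; for the singular term the uniform lower bound $\bar u_n\ge c_1\varphi_\de(d(x))$ gives the domination $\bar u_n^{-\de}\le c_1^{-\de}\varphi_\de(d(x))^{-\de}$, integrable against $\mc V(Q_T)$ test functions precisely because $\de<2+1/(p-1)$, so dominated convergence identifies the limit $\vth u^{-\de}$. This produces a weak solution $u$ with $u(t)\in\mc C_\de$ uniformly.

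Finally, for \eqref{eq7} and the continuity $u\in C([0,T];W^{1,p}_0(\Om))$: passing to the limit in the discrete energy inequality already yields the inequality form of \eqref{eq7} (with $\le$) by weak lower semicontinuity. The delicate point is the reverse inequality, for which I would establish strong convergence $\partial_t\hat u_n\to\partial_t u$ in $L^2(Q_T)$ — by matching the lower limit of $\sum_k\tau\|(u_k-u_{k-1})/\tau\|_{L^2(\Om)}^2$ with $\|\partial_t u\|_{L^2(Q_T)}^2$ — and justify the chain-rule identities $\frac{d}{dt}\frac1p\|\na u\|_p^p=\int_\Om|\na u|^{p-2}\na u\cdot\na\partial_t u$ and $\frac{d}{dt}\big(-\frac{\vth}{1-\de}\int_\Om u^{1-\de}\big)=-\vth\int_\Om u^{-\de}\partial_t u$, the latter relying on the uniform shell lower bound to control the singular integrand. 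Once \eqref{eq7} holds as an equality, its right-hand side is continuous in $t$, so $t\mapsto\frac1p\|\na u(t)\|_p^p+\frac1q\|\na u(t)\|_q^q$ is continuous; combined with the weak continuity $u(t_j)\rightharpoonup u(t_0)$ in $W^{1,p}_0(\Om)$ and uniform convexity, the Radon--Riesz property upgrades this to strong continuity in $W^{1,p}_0(\Om)$. Uniqueness is the easiest part: if $u,v$ are two solutions, subtracting the equations and testing with $u-v$ makes both the $p$- and $q$-Laplacian contributions nonnegative by monotonicity, while $\vth\int_\Om(u^{-\de}-v^{-\de})(u-v)\le0$ since $t\mapsto t^{-\de}$ is decreasing, so $\frac{d}{dt}\|u-v\|_{L^2(\Om)}^2\le0$ and $u(0)=v(0)=u_0$ forces $u\equiv v$.
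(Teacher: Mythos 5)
Your proposal follows essentially the same route as the paper: implicit Euler semi-discretization solved step by step via Theorem \ref{thm1}, uniform conical-shell bounds propagated from the sub/supersolutions of Propositions \ref{sup} and \ref{prop3} by comparison, the discrete energy estimate from testing with $u_k-u_{k-1}$, Aubin--Lions compactness with a monotonicity (Minty-type) identification of the $(p,q)$-Laplacian and domination of the singular term through the shell lower bound, and uniqueness by monotonicity of the operator and of $s\mapsto s^{-\de}$. The one step you should repair is the endgame for \eqref{eq7}: the chain-rule identity involving $\na\pa_t u$ cannot be justified as written (since $\pa_t u\in L^2(Q_T)$ has no spatial gradient a priori), and your plan to extract strong $L^2$ convergence of $\pa_t\hat u_n$ by matching lower limits is circular; the paper instead gets the reverse energy inequality directly at the limit level by testing with the difference quotients $\rho_k(u)(s)=(u(s+k)-u(s))/k$ and exploiting convexity, after which the two-sided inequality, norm convergence and uniform convexity yield $u\in C([0,T];W^{1,p}_0(\Om))$ exactly as you describe.
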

 
For problem $(P_t)$, we have existence result as follows.
\begin{Theorem}\label{thm5}
 Let $0<\de<2+1/(p-1)$ and $u_0\in W^{1,p}_0(\Om)\cap\mc C_\de$. Suppose (f1) and (f2) are satisfied. Then, for any $T>0$ and $\vth>0$, there exists a unique weak solution $u$ of $(P_t)$ such that $u(t)\in\mc C_\de$ uniformly for $t\in[0,T]$, $u\in C\big([0,T]; W^{1,p}_0(\Om)\big)$ and the following holds for $\de\neq 1$ and for any $t\in[0,T]$,
 \begin{equation*}
 \begin{aligned}
  &\int_{0}^{t}\int_{\Om} \Big(\frac{\pa u}{\pa t}\Big)^2 dxd\tau+\frac{1}{p} \int_{\Om} |\na u|^p dx+ \frac{1}{q} \int_{\Om} |\na u|^q dx- \frac{\vth}{1-\de} \int_{\Om} u^{1-\de}dx \\
  &= \int_{\Om} F(x,u(t)) dx+\frac{1}{p} \int_{\Om} |\na u_0|^p dx+ \frac{1}{q} \int_{\Om} |\na u_0|^q dx- \frac{\vth}{1-\de} \int_{\Om} u_0^{1-\de}dx -\int_{\Om} F(x,u_0)dx,
 \end{aligned}
 \end{equation*}
 where $F(x,w):=\int_{0}^{w} f(x,s)ds$. (For $\de=1$, the terms of the form $v^{1-\de}/(1-\de)$ are replaced by $\log v$ in the above expression).
\end{Theorem}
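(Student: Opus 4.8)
The plan is to deduce Theorem \ref{thm5} from Theorem \ref{thm2} by a fixed point argument that converts the $u$-dependent source $f(x,u)$ into a prescribed source $g\in L^\infty(Q_T)$. First I would use the subhomogeneous growth assumption $(f_1)$ together with the stationary existence theory (Theorems \ref{thm1} and \ref{thm4}, and the barrier analysis of Propositions \ref{sup} and \ref{prop3}) to build a time-independent subsolution $\ul u$ and supersolution $\ov u$ lying in $\mc C_\de$ and satisfying $\ul u\le u_0\le \ov u$; since $q-1<p-1$, the growth of $f$ is strictly below that of the leading operator, which is precisely what guarantees a bounded global supersolution. I would then work in the order interval
\[
K:=\big\{\, v\in C([0,T];L^2(\Om)) \ :\ \ul u\le v(t)\le \ov u \text{ a.e. in } \Om,\ \forall\, t\in[0,T]\,\big\},
\]
a closed, bounded, convex subset of $C([0,T];L^2(\Om))$.

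On $K$ I define the solution operator $\mc T:v\mapsto u$, where $u$ is the unique solution of $(G_t)$ furnished by Theorem \ref{thm2} with source $g:=f(\cdot,v(\cdot))\in L^\infty(Q_T)$ (bounded because $v$ takes values in $[0,\|\ov u\|_{L^\infty}]$ and $f$ is a bounded-below Carath\'eodory function of subhomogeneous growth). The first task is $\mc T(K)\subseteq K$: the constant-in-time barriers $\ul u,\ov u$ are sub/super solutions of the parabolic problem $(G_t)$ for this $g$, so a comparison principle for $(G_t)$ forces $\ul u\le u(t)\le \ov u$. Next I would establish the contraction estimate: for $v_1,v_2\in K$ with $u_i=\mc T(v_i)$, subtracting the two equations and testing with $u_1-u_2$, the monotonicity of the $(p,q)$-operator gives a nonnegative contribution, the singular term contributes $\vth\int_\Om(u_1^{-\de}-u_2^{-\de})(u_1-u_2)\le 0$ because $s\mapsto s^{-\de}$ is decreasing, and the local Lipschitz continuity of $f$ on the compact range $[0,\|\ov u\|_{L^\infty}]$ (with uniform constant $L$) yields
\[
\tfrac12\,\frac{d}{dt}\,\|u_1-u_2\|_{L^2(\Om)}^2 \le L\,\|v_1-v_2\|_{L^2(\Om)}\,\|u_1-u_2\|_{L^2(\Om)}.
\]
A Gr\"onwall argument (using that both solutions share the datum $u_0$) then makes $\mc T$ a contraction on $C([0,T_0];L^2(\Om))$ for $T_0$ depending only on $L$; the Banach fixed point theorem produces a unique $u=\mc T(u)$, the desired weak solution on $[0,T_0]$. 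Since $T_0$ depends only on the data-independent Lipschitz constant over the fixed range, I would iterate finitely many times, restarting from $u(T_0)\in\mc C_\de$, to reach the arbitrary $T$.

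The regularity $u\in C([0,T];W^{1,p}_0(\Om))$ and the uniform bound $u(t)\in\mc C_\de$ are inherited directly from Theorem \ref{thm2} and from $u\in K$. The energy identity is then immediate from \eqref{eq7}: with $g=f(\cdot,u)$ one has $u_t\in L^2(Q_T)$, and the chain rule (valid since $F(x,\cdot)$ is $C^1$ and $u\in C([0,T];W^{1,p}_0(\Om))$ with $u_t\in L^2$) gives $\int_0^t\int_\Om g\,u_t=\int_0^t\int_\Om f(x,u)\,u_t=\int_\Om F(x,u(t))-\int_\Om F(x,u_0)$, converting the right-hand side of the identity in Theorem \ref{thm2} into the stated one. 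Global uniqueness (beyond $K$) follows the same difference-and-Gr\"onwall scheme, packaged through the comparison principle of Theorem \ref{cmp}: any two weak solutions are bounded and in $\mc C_\de$, so operator monotonicity, the favourable sign of the singular term, and the local Lipschitz bound on $f$ force their $L^2$-difference to vanish.

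I expect the main obstacle to be the construction of the time-independent barriers $\ul u,\ov u\in\mc C_\de$ that bracket $u_0$ while remaining ordered sub/super solutions for $(G_t)$ — equivalently, securing a comparison principle for $(G_t)$ compatible with these barriers. This is where the nonhomogeneity of the $(p,q)$-operator bites, since scalar multiples of a single eigenfunction no longer suffice; one must exploit the behaviour of the purely singular solutions as $\rho\ra 0,\infty$ (Propositions \ref{sup}, \ref{prop3}) to produce barriers with the correct $\varphi_\de$-profile near $\pa\Om$. Everything downstream — the contraction, the passage to arbitrary $T$, and the energy identity — is comparatively routine once Theorem \ref{thm2} is in hand.
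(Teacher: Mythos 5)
Your proposal is correct in substance but takes a genuinely different route from the paper. The paper does not use Theorem \ref{thm2} as a black box: it re-runs the semi-discretization with the nonlinearity coupled into the implicit Euler scheme with a time lag, $u^n-\De_t\big(\De_p u^n+\De_q u^n+\vth (u^n)^{-\de}\big)=\De_t f(x,u^{n-1})+u^{n-1}$, brackets all iterates between the same stationary barriers you invoke (Lemma \ref{lemsub} and Corollary \ref{cor1} for $\de<1$; Proposition \ref{prop3} and \eqref{eq77} with $l=m$, via \eqref{eq78} and \eqref{eq80}, for $\de\ge 1$), shows $g_{\De_t}:=f(x,u_{\De_t}(\cdot-\De_t))\ra f(x,u)$ in $L^\infty(0,T;L^2(\Om))$ using the Lipschitz constant $\omega$ of $f$ on $[\ul u,\ov u]$, and then passes to the limit exactly as in the proof of Theorem \ref{thm2}, obtaining the energy identity at the discrete level following \cite{badra}. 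Your scheme --- freezing $f(\cdot,v)$ at the continuous level, invariance of the order interval $K$, contraction of $\mc T$ in $C([0,T_0];L^2(\Om))$, and iteration with a uniform $T_0$ because the Lipschitz constant is fixed once all iterates live in $[\ul u,\ov u]$ --- is sound and more modular: it quotes Theorem \ref{thm2} verbatim, delivers existence and uniqueness in one stroke, and the chain-rule conversion $\int_0^t\int_\Om f(x,u)\,u_t=\int_\Om F(x,u(t))-\int_\Om F(x,u_0)$ is legitimate since $u_t\in L^2(Q_T)$ and $u$ is bounded, whereas the paper performs this conversion through the discrete scheme of \cite{badra}. Two points in your writeup deserve care: (i) the invariance $\mc T(K)\subseteq K$ needs a \emph{parabolic} weak comparison principle in the singular setting, which the paper never states as a standalone result (it compares step-by-step at the elliptic level inside the Euler scheme); you can supply it by testing with $(\ul u-u)^+$ and $(u-\ov u)^+$ as in the proof of Lemma \ref{lem9}, the pairing with $u^{-\de}$ being integrable thanks to the uniform $\mc C_\de$ bounds and $\de<2+1/(p-1)$, and the supersolution property for all $v\in K$ rests on the global bound $f(x,s)\le m\,s^{q-1}+L$ extracted from (f1)--(f2), which is precisely why the paper takes $l=m$ in \eqref{eq77}; (ii) your passing appeal to Theorem \ref{cmp} for parabolic uniqueness is a misattribution, since that theorem is elliptic --- but the difference-and-Gr\"onwall argument you actually describe (operator monotonicity, the favourable sign of $s\mapsto s^{-\de}$, local Lipschitz bound on $f$) is the correct mechanism and is the one the paper intends.
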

\begin{Remark}
  For the case $\de\ge 2+1/(p-1)$, using Theorem \ref{thm3} together with the comparison principle of \cite[Theorem 1.5]{JDS} and the uniform behaviour of solution to $(S_\la)$ with respect to the distance function, one can generalize the existence result of Theorem \ref{thm5} for problem $(P_t)$ such that the solution $u^{(p-1+\nu)/p}\in L^p(0,T;W^{1,p}_0(\Om))$, for some $\nu\ge 2$, whenever $u_0\in W^{1,p}_0(\Om)\cap\mc C_\de$.
\end{Remark}
Furthermore, we prove the following regularity result for solution of problem $(P_t)$. Set 
\[ \mc D(\mc A):= \big\{ v\in\mc C_\de\cap W^{1,p}_0(\Om) : \mc A v:= -\De_p v-\De_q v- \vth \; v^{-\de}\in L^\infty(\Om) \big\}.  \]
\begin{Proposition}\label{prop2}
  Assume the hypotheses of Theorem \ref{thm5} are satisfied and $u_0\in\ov{\mc D(\mc A)}^{L^\infty(\Om)}$. Then the solution $u$ of problem $(P_t)$ belongs to $C([0,T]; C_0(\ov\Om))$ and 
	\begin{enumerate}
	  \item[(i)] If $v$ is a solution of $(P_t)$ with initial datum $v_0\in\ov{\mc D(\mc A)}^{L^\infty(\Om)}$, then the following holds
		\begin{align*}
		\|u(t)-v(t)\|_{L^\infty(\Om)} \le e^{\omega t}\|u_0-v_0\|_{L^\infty(\Om)}, \quad 0\le t\le T.
		\end{align*}
	  \item[(ii)] If $u_0\in\mc D(\mc A)$, then $u\in W^{1,\infty}(0,T; L^\infty(\Om))$ with $\De_p u+\De_q u+\vth \; u^{-\de}\in L^\infty(Q_T)$ and
		\begin{align*}
		\bigg\|\frac{du(t)}{dt}\bigg\|_{L^\infty(\Om)}\le e^{\omega t} \|\De_p u_0+\De_q u_0 +\vth u_0^{-\de}+f(x,u_0)\|_{L^\infty(\Om)},
		\end{align*}
	 \end{enumerate} 
  where $\omega>0$ is the Lipschitz constant for $f$ in $[\ul u,\ov u]$ with $\ul u$ and $\ov u$ as sub and supersolution to $(P)$ as constructed in the proof of Theorem \ref{thm4}, respectively. 
\end{Proposition}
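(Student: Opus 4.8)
The plan is to read $(P_t)$ as an abstract Cauchy problem $u'(t)+\mc Au(t)=f(\cdot,u(t))$ on the Banach space $L^\infty(\Om)$ and to invoke the theory of nonlinear semigroups generated by $\omega$-accretive operators. First I would verify that $\mc A$ is $m$-accretive on $L^\infty(\Om)$. The resolvent equation $(I+\la\mc A)u=h$ is precisely the stationary problem $(S_\la)$, so for $0<\de<2+1/(p-1)$ Theorem \ref{thm1} supplies, for every $h\in L^\infty(\Om)$ and every $\la>0$, a unique solution $u_\la\in W^{1,p}_0(\Om)\cap\mc C_\de\cap C_0(\ov\Om)$; this is the range condition $R(I+\la\mc A)=L^\infty(\Om)$. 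The accretivity of $\mc A$, equivalently the sup-norm contractivity of its resolvent, follows from the comparison principle of Theorem \ref{cmp}: the part $-\De_p-\De_q$ is monotone and, since $s\mapsto s^{-\de}$ is decreasing, $-\vth s^{-\de}$ is nondecreasing, so the comparison principle yields $\|(I+\la\mc A)^{-1}h_1-(I+\la\mc A)^{-1}h_2\|_{L^\infty(\Om)}\le\|h_1-h_2\|_{L^\infty(\Om)}$ for all $\la>0$. Hence $\mc A$ is $m$-accretive on $L^\infty(\Om)$.

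Next I would absorb the reaction term. Because the solution and the relevant Euler iterates are trapped in the order interval $[\ul u,\ov u]$ determined by the sub- and supersolutions built in the proof of Theorem \ref{thm4}, it suffices to use the Lipschitz constant $\omega$ of $f$ on $[\ul u,\ov u]$. A bounded $\omega$-Lipschitz perturbation of an $m$-accretive operator is $\omega$-$m$-accretive, so $\mc A-f(\cdot,\cdot)$ is $\omega$-$m$-accretive with domain $\mc D(\mc A)$. The Crandall--Liggett generation theorem then produces a semigroup $\{S(t)\}_{t\ge 0}$ of type $\omega$ on $\ov{\mc D(\mc A)}^{L^\infty(\Om)}$ via the exponential formula $S(t)u_0=\lim_{n\ra\infty}(I+\tfrac{t}{n}(\mc A-f))^{-n}u_0$, the limit being uniform on $[0,T]$. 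The implicit Euler scheme used in the proof of Theorem \ref{thm5} is exactly this approximation, so the mild solution $t\mapsto S(t)u_0$ coincides with the weak solution $u$ of $(P_t)$; in particular $u\in C([0,T];L^\infty(\Om))$.

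The assertion $u\in C([0,T];C_0(\ov\Om))$ then comes from a closed-subspace argument: each resolvent iterate lies in $C_0(\ov\Om)$ by Theorem \ref{thm1}, $C_0(\ov\Om)$ is closed in $L^\infty(\Om)$ for the sup norm, and the Crandall--Liggett approximants converge in $L^\infty(\Om)$; hence $u(t)\in C_0(\ov\Om)$ for every $t$ and $t\mapsto u(t)$ is sup-norm continuous. Claim (i) is the standard $\omega$-contraction estimate for semigroups of type $\omega$, namely $\|S(t)u_0-S(t)v_0\|_{L^\infty(\Om)}\le e^{\omega t}\|u_0-v_0\|_{L^\infty(\Om)}$, read off directly from the $\omega$-accretivity and the exponential formula. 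For claim (ii), when $u_0\in\mc D(\mc A)$ the B\'enilan--Crandall regularity theory for $\omega$-accretive operators gives that the mild solution is a strong solution, Lipschitz in time, with $u(t)\in\mc D(\mc A)$ for a.e.\ $t$ and
\[ \Big\|\frac{du}{dt}(t)\Big\|_{L^\infty(\Om)}\le e^{\omega t}\,\|(\mc A-f)u_0\|_{L^\infty(\Om)}=e^{\omega t}\,\|\De_p u_0+\De_q u_0+\vth u_0^{-\de}+f(x,u_0)\|_{L^\infty(\Om)}; \]
writing the equation as $\De_p u+\De_q u+\vth u^{-\de}=u_t-f(\cdot,u)$ and using $u_t\in L^\infty(Q_T)$ together with $f(\cdot,u)\in L^\infty(Q_T)$ then yields $\De_p u+\De_q u+\vth u^{-\de}\in L^\infty(Q_T)$ and $u\in W^{1,\infty}(0,T;L^\infty(\Om))$.

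The main obstacle, I expect, is the verification that $\mc A$ is genuinely $m$-accretive in $L^\infty(\Om)$ despite the singularity at $u=0$: the range condition is not an abstract surjectivity statement but the full strength of Theorem \ref{thm1} (valid only in the range $\de<2+1/(p-1)$), and the accretivity rests on the comparison principle for the singular $(p,q)$-operator, which must hold uniformly up to the boundary where both competitors degenerate like $\varphi_\de(d(x))$. A secondary technical point is the rigorous identification of the abstract mild solution with the weak solution of Theorem \ref{thm5} and the invariance of the order interval $[\ul u,\ov u]$ under the discrete scheme, which is what legitimises replacing the merely locally Lipschitz $f$ by a globally $\omega$-Lipschitz one.
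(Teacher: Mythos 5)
Your overall architecture coincides with the paper's: establish that $\mc A$ is $m$-accretive in $L^\infty(\Om)$ (the range condition $R(I+\la\mc A)=L^\infty(\Om)$ being exactly the solvability of $(S_\la)$, i.e.\ Theorem \ref{thm1} — the paper's proof actually cites Theorem \ref{thm2} here, which is evidently a slip), globalize $f$ to an $\omega$-Lipschitz perturbation via the invariant order interval $[\ul u,\ov u]$ from Theorem \ref{thm4}, and then delegate the conclusions — $C([0,T];C_0(\ov\Om))$ regularity, the $e^{\omega t}$ contraction in (i), and the strong-solution estimate in (ii) — to abstract nonlinear semigroup theory; the paper does precisely this, citing Barbu (Chap.~4, Theorems 4.2 and 4.4) or the steps of \cite[Proposition 0.2]{badra}.

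There is, however, one genuine gap, and it sits at the only step the paper proves in detail rather than cites: the sup-norm contraction of the resolvent. You derive it \emph{from the comparison principle of Theorem \ref{cmp}}, but that theorem does not apply to this situation. To compare $u_1=(I+\la\mc A)^{-1}h_1$ with the translate $u_2+\|h_1-h_2\|_{L^\infty(\Om)}$ one needs a comparison statement that (a) accommodates the zeroth-order term of $(S_\la)$, which Theorem \ref{cmp} does not contain, and (b) admits a competitor that does not vanish on $\pa\Om$, whereas Theorem \ref{cmp} requires both functions to lie in $L^\infty(\Om)\cap W^{1,p}_0(\Om)$ and a constant shift leaves $W^{1,p}_0(\Om)$. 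Nor can the affine term $(h-s)/\la$ be folded into the ``$g$'' of Theorem \ref{cmp}: condition (f2) demands that $s\mapsto g(x,s)/s^{q-1}$ be nonincreasing, and $s\mapsto -s^{2-q}$ is increasing when $q>2$ (and $h$ may change sign). The paper avoids all of this with a direct $T$-accretivity computation: for $u+\la\mc A u=f$ and $v+\la\mc A v=g$, test the difference of the two equations with $w:=\big(u-v-\|f-g\|_{L^\infty(\Om)}\big)^+\in W^{1,p}_0(\Om)$. On $\{w>0\}$ one has $u>v>0$, hence $u^{-\de}\le v^{-\de}$ and the singular terms contribute with the favorable sign; the gradient terms are nonnegative by monotonicity of $\xi\mapsto|\xi|^{p-2}\xi$ and $\xi\mapsto|\xi|^{q-2}\xi$ (the strong monotonicity inequalities from Cases (i)--(ii) of Theorem \ref{thm2}); and since $u-v=w+\|f-g\|_{L^\infty(\Om)}$ on $\{w>0\}$, the zeroth-order term absorbs $\int_\Om(f-g)w$, yielding $\int_\Om w^2\le 0$, i.e.\ $u-v\le\|f-g\|_{L^\infty(\Om)}$; exchanging the roles of $u$ and $v$ gives $\|u-v\|_{L^\infty(\Om)}\le\|f-g\|_{L^\infty(\Om)}$. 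Once your accretivity step is repaired along these lines, the rest of your proposal (Crandall--Liggett, identification of the mild solution with the weak solution through the implicit Euler scheme, closedness of $C_0(\ov\Om)$ in $L^\infty(\Om)$, and the type-$\omega$ estimates in (i)--(ii)) is exactly what the paper handles by citation, and your proof is complete.
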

 \begin{Corollary}\label{cor3}
 	Let the hypotheses of Theorem \ref{thm5} be true and assume $u_0\in\mc D(\mc A)$. Then, the solution $u$ of problem $(P_t)$ belongs to $C([0,T]; W^{1,m}_0(\Om))$ for all $m<\frac{p-1+\de}{\de-1}$, in the case of $1<\de<2+1/(p-1)$.\\
 	(Note that the result is true for all $m>1$ in the case of $\de<1$ from the $C^{1,\al}(\ov\Om)$ regularity, $\al\in(0,1)$, of solutions to $(PS)$.)
 \end{Corollary}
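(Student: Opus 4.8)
The plan is to freeze the time variable: for (almost) every $t$ the slice $u(t)$ solves \emph{exactly} a stationary purely singular problem of the type $(PS)$ with bounded datum, so the higher integrability of Theorem \ref{sob-int} applies slicewise. Making that estimate uniform in $t$ yields $u\in L^\infty\big(0,T;W^{1,m'}_0(\Om)\big)$ for every $m'<(p-1+\de)/(\de-1)$, and interpolating this with the continuity in $W^{1,p}_0$ already provided by Theorem \ref{thm5} produces the claimed $C([0,T];W^{1,m}_0(\Om))$ regularity.

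First I would invoke Proposition \ref{prop2}(ii): since $u_0\in\mc D(\mc A)$ we have $u\in W^{1,\infty}(0,T;L^\infty(\Om))$ and $\De_p u+\De_q u+\vth\,u^{-\de}\in L^\infty(Q_T)$. Rewriting the equation of $(P_t)$ pointwise in time,
\begin{equation*}
-\De_p u(t)-\De_q u(t)-\vth\,u(t)^{-\de}=f(x,u(t))-u_t(t)=:b(\cdot,t),
\end{equation*}
and using that $u(t)\in\mc C_\de\subset L^\infty(\Om)$ uniformly, that $f$ is bounded on bounded sets, and that $u_t\in L^\infty(Q_T)$, we get $b\in L^\infty(Q_T)$ with $M:=\sup_{t\in[0,T]}\|b(\cdot,t)\|_{L^\infty(\Om)}<\infty$. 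Thus for a.e. $t$ the function $u(t)$ is the solution of $-\De_p v-\De_q v=\vth\,v^{-\de}+b(\cdot,t)$ with $b(\cdot,t)\in L^\infty(\Om)$, which is precisely the framework of Theorem \ref{sob-int} (the positive constant $\vth$ in front of the singular term does not affect its proof). I would also record that the admissibility condition $(p-2)\de<2(p-1)$ is automatic on the range considered: for $1<p\le 2$ it holds for every $\de>1$, and for $p>2$ it reads $\de<2+2/(p-2)$, where a direct computation gives $2+1/(p-1)<2+2/(p-2)$, so $1<\de<2+1/(p-1)$ is always covered.

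Applying Theorem \ref{sob-int} slicewise then gives $u(t)\in W^{1,m'}_0(\Om)$ for every $m'<(p-1+\de)/(\de-1)$ and a.e. $t$. Inspecting its proof (the higher integrability for the nonhomogeneous $(p,q)$-equation following \cite{dibMan}, with the auxiliary $w$ solving $-\De w=\vth u^{-\de}+b$ via the Green function), the bound on $\|\na u(t)\|_{L^{m'}(\Om)}$ is controlled solely by $M$ and the structural data $\Om,p,q,\de,\vth$, hence is uniform in $t$. A standard weak lower semicontinuity argument, using reflexivity of $W^{1,m'}_0(\Om)$ together with $u\in C([0,T];W^{1,p}_0(\Om))$, promotes this bound from a.e. $t$ to every $t$, yielding $u\in L^\infty\big(0,T;W^{1,m'}_0(\Om)\big)$.

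To upgrade to continuity, fix $p\le m<(p-1+\de)/(\de-1)$ and choose $m'$ with $m<m'<(p-1+\de)/(\de-1)$, letting $\theta\in(0,1]$ solve $\tfrac1m=\tfrac{\theta}{p}+\tfrac{1-\theta}{m'}$. By Theorem \ref{thm5}, $\|\na u(t)-\na u(s)\|_{L^p(\Om)}\ra 0$ as $t\ra s$, while the previous step bounds $\|\na u(t)-\na u(s)\|_{L^{m'}(\Om)}$ uniformly, so by the log-convexity of $L^p$ norms,
\begin{equation*}
\|\na u(t)-\na u(s)\|_{L^m(\Om)}\le\|\na u(t)-\na u(s)\|_{L^p(\Om)}^{\theta}\,\|\na u(t)-\na u(s)\|_{L^{m'}(\Om)}^{1-\theta}\ra 0,
\end{equation*}
which is exactly $u\in C([0,T];W^{1,m}_0(\Om))$; the case $m<p$ is immediate from $W^{1,p}_0(\Om)\hookrightarrow W^{1,m}_0(\Om)$ on the bounded domain $\Om$. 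The main obstacle I anticipate is the uniformity in the third paragraph: one must verify that the higher-integrability estimate of Theorem \ref{sob-int} is genuinely \emph{quantitative}, its constant depending only on $\|b(\cdot,t)\|_{L^\infty}$ and structural constants rather than on qualitative features of the particular slice, so that the $W^{1,m'}_0$ bound is uniform in $t$. Once that is secured, the remaining steps are the elementary interpolation above.
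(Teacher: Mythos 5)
Your proposal is correct and follows essentially the same route as the paper: Proposition \ref{prop2}(ii) gives the uniform $L^\infty$ bound on $u_t$, the equation is then read slicewise as a problem of type $(PS)$ with $b(\cdot,t)=f(x,u(t))-u_t(\cdot,t)\in L^\infty(Q_T)$, Theorem \ref{sob-int} yields the uniform-in-$t$ bound $u\in L^\infty(0,T;W^{1,m'}_0(\Om))$, and interpolation with the $C([0,T];W^{1,p}_0(\Om))$ continuity of Theorem \ref{thm5} upgrades this to $C([0,T];W^{1,m}_0(\Om))$. Your write-up is in fact slightly more careful than the paper's at two points it glosses over --- you explicitly keep the $f(x,u(t))$ contribution in $b$ (the paper writes only $b:=u_t$) and you verify that the admissibility condition $(p-2)\de<2(p-1)$ of Theorem \ref{sob-int} is automatic on the range $1<\de<2+1/(p-1)$.
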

Next, we prove the following stabilization result.
\begin{Theorem}\label{thm6}
  Under the hypotheses of Theorem \ref{thm5}, the solution $u$ of problem $(P_t)$ is defined in $\Om\times(0,\infty)$ and it satisfies
  \begin{align*}
  	u(t)\ra u_\infty \quad\mbox{in }L^\infty(\Om) \ \mbox{ as }t\ra\infty,
  \end{align*}
  where $u_\infty$ is the solution to stationary problem $(P)$.
\end{Theorem}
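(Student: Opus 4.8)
The plan is to trap the trajectory between two solutions that are monotone in time, identify their limits with the unique stationary state via uniqueness, and then upgrade pointwise monotone convergence to $L^\infty$ through Dini's theorem. First I would record that the solution is global: Theorem~\ref{thm5} produces, for every $T>0$, a unique solution with $u(t)\in\mc C_\de$ uniformly on $[0,T]$, and the confining constants come from the stationary sub/supersolutions $\ul u\le\ov u$ constructed in the proof of Theorem~\ref{thm4}, hence are independent of $T$. By uniqueness these solutions are consistent on overlapping intervals and glue to a solution on $[0,\infty)$ with $\ul u\le u(t)\le\ov u$ for all $t\ge 0$. Choosing $\ul u,\ov u$ comparable with the datum so that $\ul u\le u_0\le\ov u$ (as in the construction for Theorem~\ref{thm4}), I let $\ul u(t)$ and $\ov u(t)$ denote the solutions of $(P_t)$ issued from $\ul u$ and $\ov u$.

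Next I would establish ordering and time-monotonicity through the comparison principle (Theorem~\ref{cmp} and the parabolic comparison used for Theorem~\ref{thm5}). Since $\ov u$ is a supersolution of $(P)$ and is time-independent, it is a stationary supersolution of $(P_t)$ with the same initial value as $\ov u(t)$, so $\ov u(t)\le\ov u$ for all $t$; comparing $\ov u(\cdot+h)$ with $\ov u(\cdot)$, whose initial values satisfy $\ov u(h)\le\ov u(0)$, then yields $\ov u(t+h)\le\ov u(t)$, i.e. $t\mapsto\ov u(t)$ is nonincreasing. Symmetrically $t\mapsto\ul u(t)$ is nondecreasing, and comparison of initial data gives $\ul u(t)\le u(t)\le\ov u(t)$. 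Being monotone and order-bounded (by $u_\infty$, itself a stationary solution, so that $u_\infty\le\ov u(t)$ and $\ul u(t)\le u_\infty$ by parabolic comparison), $\ov u(t)$ and $\ul u(t)$ converge pointwise a.e. to limits $\ov u_\infty\ge u_\infty\ge\ul u_\infty$.

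To identify these limits I would use the energy identity of Theorem~\ref{thm5}. Writing $E$ for the associated Lyapunov functional, the identity gives $\int_0^t\|\pa_t\ov u\|_{L^2}^2+E(\ov u(t))=E(\ov u)$; because the trajectory stays in a bounded subset of $W^{1,p}_0(\Om)\cap\mc C_\de$ and the assumption $\de<2+1/(p-1)$ guarantees finiteness of the singular energy term $\int\ov u^{1-\de}$, the quantity $E(\ov u(t))$ is bounded, and being nonincreasing it converges; hence $\int_0^\infty\|\pa_t\ov u\|_{L^2}^2<\infty$ and there is $t_n\ra\infty$ with $\pa_t\ov u(t_n)\ra 0$ in $L^2(\Om)$. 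Along $t_n$ the functions $\ov u(t_n)$ solve the elliptic equation with right-hand side $\vth\,\ov u(t_n)^{-\de}+f(x,\ov u(t_n))-\pa_t\ov u(t_n)$; using the uniform $W^{1,p}_0$ and $\mc C_\de$ bounds I would pass to the limit, the lower bound $\ov u(t_n)\ge c_1\varphi_\de(d)$ supplying the domination needed for the singular term and the $(S_+)$/monotonicity property of $-\De_p-\De_q$ giving strong gradient convergence, so that $\ov u_\infty$ solves $(P)$. By the uniqueness in Theorem~\ref{thm4}, $\ov u_\infty=u_\infty$, and likewise $\ul u_\infty=u_\infty$.

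Finally I would conclude in $L^\infty$. By Proposition~\ref{prop2} the maps $t\mapsto\ov u(t),\ul u(t)$ take values in $C_0(\ov\Om)$, and $u_\infty\in C_0(\ov\Om)$; since $\ov u(t)\downarrow u_\infty$ and $\ul u(t)\uparrow u_\infty$ monotonically and pointwise on the compact set $\ov\Om$ with continuous limit, Dini's theorem (in its monotone-family form) upgrades these to uniform convergence. The squeeze $0\le u(t)-\ul u(t)\le\ov u(t)-\ul u(t)$ then gives $\|u(t)-u_\infty\|_{L^\infty(\Om)}\le\|\ov u(t)-\ul u(t)\|_{L^\infty(\Om)}\ra 0$ as $t\ra\infty$. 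I expect the principal obstacle to be the limit passage of Step three: reconciling the singular term $u^{-\de}$, which has no integrable control near $\pa\Om$ a priori, with the non-homogeneous operator $-\De_p-\De_q$ when proving that $\ov u_\infty$ solves $(P)$. The $\mc C_\de$ confinement is exactly what resolves this, providing a time-uniform two-sided bound $c_1\varphi_\de(d)\le\ov u(t)\le c_2\varphi_\de(d)$ that both dominates the singular nonlinearity and, through the matching boundary profile $\varphi_\de$, legitimizes the concluding Dini step.
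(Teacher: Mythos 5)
Your overall architecture coincides with the paper's: trajectories issued from the sub/supersolution pair $\ul u\le u_0\le\ov u$ of Theorem \ref{thm4}, time-monotonicity and the squeeze $w_1(t)\le u(t)\le w_2(t)$, identification of the monotone limits with $u_\infty$ by uniqueness, and a final Dini upgrade to $L^\infty(\Om)$. You diverge in two places. First, you identify the limits by the Lyapunov/energy route ($\int_0^\infty\|\pa_t w\|_{L^2}^2<\infty$, a sequence $t_n$ with $\pa_t w(t_n)\to 0$ in $L^2(\Om)$, and the $(S_+)$ property to pass to the elliptic limit), whereas the paper argues through the semigroup: $w_1^\infty=\lim_{\tl t\ra\infty} S(\tl t+t)\ul u=S(t)w_1^\infty$, so the limit is a fixed point of the flow and hence a stationary solution, with no energy estimate needed. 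Your route is workable: the $\mc C_\de$ confinement together with $\de<2+1/(p-1)$ makes $\int_\Om u^{1-\de}$ finite, so the energy is bounded below and the singular term is dominated in the $(S_+)$ step, exactly as you say. Second, the paper obtains time-monotonicity at the discrete level of the implicit Euler scheme \eqref{eq53} under the step restriction $\De_t<1/\omega$, rather than by your continuous comparison of time-shifted trajectories; your version tacitly uses a parabolic comparison principle for $(P_t)$ that you do not prove, though it does hold by testing with positive parts as in Lemma \ref{lem9} and applying Gronwall with the Lipschitz bound on $f$ in $[\ul u,\ov u]$.

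The genuine gap is in your final step. You invoke Proposition \ref{prop2} to place the trajectories in $C_0(\ov\Om)$ — which is what Dini's theorem needs — but Proposition \ref{prop2} requires the initial datum to lie in $\ov{\mc D(\mc A)}^{L^\infty(\Om)}$, and $\ul u,\ov u$ do not obviously satisfy this: for instance the supersolution $\ov u=\tl u_M$ of \eqref{eq77} has $\mc A\ov u=(M-\vth)\ov u^{-\de}+m\,\ov u^{q-1}+L$, which blows up like $d(x)^{-p\de/(p-1+\de)}$ near $\pa\Om$, so $\ov u\notin\mc D(\mc A)$, and membership in the closure is not automatic. The paper devotes a separate argument to precisely this point: it truncates $h:=\mc A\ul u$ and $g:=\mc A\ov u$ to $h_n=\max\{h,-n\}$ and $g_n=\min\{g,n\}$, solves $\mc A u_n=h_n$, $\mc A v_n=g_n$, and uses the monotonicity of the resulting sequences together with Dini's theorem to get $u_n\ra\ul u$, $v_n\ra\ov u$ in $L^\infty(\Om)$, establishing $\ul u,\ov u\in\ov{\mc D(\mc A)}^{L^\infty(\Om)}$. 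Without this verification your continuity claim — hence the Dini step and the final uniform convergence — is unsupported; once it is supplied, your proof closes and is essentially the paper's, modulo the alternative energy-based identification of the limit.
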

Now, we turn to the super-homogeneous case. That is, we assume $f(x,u)$ is of the form $f(u)$ only and $f\in C(\mb R)$ together with
\begin{enumerate}
 \item[(f3)] there exists $r\in (q,p^*)$ and $c_r>0$ such that $c_r |s|^r\leq r F(s)\leq sf(s)$ for all $s\in\mb R$, where $F(u)=\int_{0}^{u}f(t)dt$, and $p^*:=Np/(N-p)$ if $p<N$ and $p^*<\infty$ be arbitrary large number if $p\geq N$.
\end{enumerate}
 We have the local existence result as below.
\begin{Theorem}\label{thm7}
 Let $u_0\in W^{1,p}_0(\Om)\cap\mc C_\de$.	
 Then, for all $\vth>0$ and $\de<2+1/(p-1)$, there exists $\hat T>0$ small enough, such that the problem $(P_t)$ has a solution $u\in C([0,T];L^\infty(\Om))$, which is defined in $Q_T$ for all $T<\hat T$.
\end{Theorem}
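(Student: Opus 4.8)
The plan is to set up a Banach fixed-point scheme in the space $X:=C([0,\hat T];L^\infty(\Om))$ that decouples the super-homogeneous reaction from the singular term. Given $v\in X$ with $v(0)=u_0$, the function $g_v(x,t):=f(v(x,t))$ belongs to $L^\infty(Q_{\hat T})$ since $f\in C(\mb R)$ and $v$ is bounded; hence the frozen problem $(G_t)$ with datum $g=g_v$ admits, by Theorem \ref{thm2}, a unique solution which I denote $u=\Phi(v)$. This $u$ is positive, satisfies $u(t)\in\mc C_\de$ uniformly in $t$, and — invoking Proposition \ref{prop2} — lies in $C([0,\hat T];L^\infty(\Om))$; in particular $\Phi$ maps $X$ into itself. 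A fixed point of $\Phi$ is exactly a weak solution of $(P_t)$, so it suffices to show that $\Phi$ is a contraction on a suitable closed ball of $X$ for $\hat T$ small enough.

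First I would record the contraction estimate, which is where the nonlinear semigroup theory enters. The operator $\mc A v=-\De_p v-\De_q v-\vth\,v^{-\de}$ is $m$-accretive in $L^\infty(\Om)$: indeed $(S_\la)$ reads $(I+\la\mc A)u=h$, and the unique solvability together with the comparison principle established for $(S_\la)$ (Theorem \ref{thm1}) show that the resolvent $(I+\la\mc A)^{-1}$ is an order-preserving contraction on $L^\infty(\Om)$. Consequently the integral (mild) solutions produced by the implicit Euler scheme depend Lipschitz-continuously on the forcing term: if $u_i=\Phi(v_i)$ ($i=1,2$) share the datum $u_0$, then
\begin{equation*}
\|u_1(t)-u_2(t)\|_{L^\infty(\Om)}\le\int_0^t\|g_{v_1}(\tau)-g_{v_2}(\tau)\|_{L^\infty(\Om)}\,d\tau,\qquad 0\le t\le\hat T.
\end{equation*}
Working inside the ball $B_R=\{v\in X:\ v(0)=u_0,\ \|v(t)-u_0\|_{L^\infty(\Om)}\le R\ \text{for all }t\}$, all competitors satisfy $\|v\|_{L^\infty}\le\|u_0\|_{L^\infty}+R=:M$, so the standing local Lipschitz hypothesis on $f$ gives $\|g_{v_1}-g_{v_2}\|_{L^\infty}\le L_M\|v_1-v_2\|_{L^\infty}$, whence $\|\Phi(v_1)-\Phi(v_2)\|_X\le L_M\hat T\,\|v_1-v_2\|_X$. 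Choosing $\hat T<1/L_M$ makes $\Phi$ a strict contraction.

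For the self-mapping property $\Phi(B_R)\subset B_R$ I would split $\Phi(v)(t)-u_0=\big(\Phi(v)(t)-S(t)u_0\big)+\big(S(t)u_0-u_0\big)$, where $S(t)=\lim_n (I+(t/n)\mc A)^{-n}$ is the contraction semigroup generated by $-\mc A$. The inhomogeneous part is bounded by $\int_0^t\|g_v\|_{L^\infty}\le C_M t$ with $C_M:=\sup_{[0,M]}|f|$, uniformly over $v\in B_R$, while the homogeneous part $\|S(t)u_0-u_0\|_{L^\infty}$ does not depend on $v$ at all and tends to $0$ as $t\to0$ whenever $u_0\in\ov{\mc D(\mc A)}^{L^\infty(\Om)}$. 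Hence for $\hat T$ small both contributions are $\le R/2$, giving $\Phi(v)\in B_R$. This is the step I expect to be the main obstacle: establishing the $L^\infty$-semigroup framework in the presence of the singular term $\vth\,v^{-\de}$ (so that the Crandall–Liggett limit and the integral-solution estimate above are legitimate), and making the convergence $S(t)u_0\to u_0$ quantitative near $t=0$. The latter is exactly where the $\mc C_\de$ localization of the solutions and the $L^\infty$ estimates underlying Proposition \ref{prop2} are needed to place $u_0\in W^{1,p}_0(\Om)\cap\mc C_\de$ (after approximation) in the closure $\ov{\mc D(\mc A)}^{L^\infty(\Om)}$.

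With contraction and self-mapping in hand, the Banach fixed-point theorem yields a unique $u\in B_R$ with $u=\Phi(u)$, and this $u$ is the desired solution of $(P_t)$ on $Q_T$ for every $T<\hat T$. The positivity $u>0$, the uniform membership $u(t)\in\mc C_\de$, and the continuity $u\in C([0,\hat T];L^\infty(\Om))$ are inherited from the frozen problems through Theorem \ref{thm2} and Proposition \ref{prop2}, which completes the argument; note that the growth condition (f3) plays no role here, the local existence time $\hat T$ being governed solely by the local Lipschitz constant $L_M$ and the bound $C_M$ on a neighbourhood of the range of $u_0$.
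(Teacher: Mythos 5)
Your proposal is correct and follows essentially the paper's own proof: the paper likewise freezes the nonlinearity as $f(v)$ for $v\in C([0,\hat T];L^\infty(\Om))$, solves the frozen problem via Theorem \ref{thm2} together with the $m$-accretivity of $\mc A u=-\De_p u-\De_q u-\vth\, u^{-\de}$ established in the proof of Proposition \ref{prop2}, and applies the Banach fixed point theorem with $\hat T$ small, determined only by the local Lipschitz constant of $f$ and its bound near the range of $u_0$. The only cosmetic differences are that the paper works in the ball $\|v\|_{X}\le 2\|u_0\|_{L^\infty(\Om)}$ and uses the explicit representation formula \eqref{eq94}, whereas you center the ball at $u_0$ (hence additionally invoke $S(t)u_0\to u_0$ as $t\to 0$, i.e. $u_0\in\ov{\mc D(\mc A)}^{L^\infty(\Om)}$, a point you rightly flag and which the paper glosses in asserting $u\in C([0,T];L^\infty(\Om))$) and replace \eqref{eq94} by the equivalent integral-solution inequality for $m$-accretive operators.
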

\begin{Remark}
 We remark that the solution $u$ obtained in Theorem \ref{thm7} belongs to the space $C([0,T];W^{1,p}_0(\Om))$ and the energy estimate of Theorem \ref{thm5} holds in this case too. Moreover, the results of Proposition \ref{prop2} and Corollary \ref{cor3} also hold for small $T>0$ (possibly less than $\hat T$). Proofs of the aforementioned results are essentially same as in the sub-homogeneous case by noticing the fact that the solution is in $C([0,T];L^\infty(\Om))$ and the local Lipschitz nature of $f$ makes the problem $(P_t)$ of similar kind to problem $(G_t)$.
 \\ Moreover, as a consequence of the integral representation of the solution as in \eqref{eq94}, we see that the solution exists globally or there exists $T^*<\infty$ such that $\| u \|_{L^\infty(\Om)}\ra\infty$ as $t\ra T^*$.
\end{Remark}
 To obtain the blow up result, for $u\in W^{1,p}_0(\Om)$, we define the following, 
 \begin{align*}
 &J_\vth(u):= \frac{1}{p}\int_{\Om} |\na u|^p + \frac{1}{q}\int_{\Om} |\na u|^q -\frac{\vth}{1-\de} \int_{\Om} |u|^{1-\de}-\int_{\Om} F(u) \\
 &I_\vth(u):= \int_{\Om} |\na u|^p + \int_{\Om} |\na u|^q - \vth \int_{\Om} |u|^{1-\de}-\int_{\Om} f(u)u.
 \end{align*}	
 For $\de=1$, the term $\frac{1}{1-\de}|u|^{1-\de}$ in the first equation is replaced by $\log |u|$.
 The corresponding Nehari manifold is defined as 
 \begin{align*}
 \mc N_{\vth}:= \{ u\in W^{1,p}_0(\Om) \ : \ I_{\vth}(u)=0\} \quad \mbox{ with } \  \Theta_{\vth}=\inf_{u\in\mc N_{\vth}}J_{\vth}(u).
 \end{align*}
 We note that $\Theta_{\vth}<0$, for the case $\de<1$. Indeed, proceeding similar to \cite[Lemma 4.4]{dpk}, we can prove that the infimum over $\mc N^+ \cup\mc N^0\subset\mc N$ is negative for $\vth<\la_*$. 	
\begin{Theorem}\label{thm8}
  Let the hypotheses in Theorem \ref{thm7} be satisfied and the function $f$ satisfies (f3) with $r\in [p,p^*)$ and $r> 2$. Then, the solution $u$ of problem $(P_t)$ blows up in finite time in the sense that, there exists $T^*<\infty$ such that
 	\begin{align*}
 	 \lim_{t\ra T^*} \| u(\cdot,t) \|_{L^\infty(\Om)}=\infty,
 	\end{align*}
 under the following conditions:
  \begin{enumerate}
    \item [(i)] For $\de \leq 1$, $2N/(N-2)\le q$ and for all $\vth<\vth_*$, where $\vth_*>0$ is a constant, if $J_\vth(u_0)\leq \Theta_{\vth}$ for $\de<1$ and $J_\vth(u_0)\leq \min\{\Theta_{\vth},0\}$ for $\de=1$, and $I_\vth(u_0)<0$. 
    \item[(ii)] For $\de\in \big(1, 2+\frac{1}{p-1}\big)$ and for all $\vth>0$, if $J_\vth(u_0)\leq 0$.
  \end{enumerate}
 \end{Theorem}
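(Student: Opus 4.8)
The plan is to run the classical concavity (Levine-type) argument, adapted to the $(p,q)$ structure and, crucially, to the singular term, on the \emph{time-integrated} functional
\[
M(t) := \int_0^t \|u(s)\|_{L^2(\Om)}^2\,ds + (T_1-t)\,\|u_0\|_{L^2(\Om)}^2 + \beta\,(t+\tau_0)^2,
\]
with constants $T_1,\beta,\tau_0>0$ fixed later. Integrating in time is precisely what circumvents the non-differentiability of the energy caused by the singular nonlinearity: the second derivative $M''$ will involve only the first time derivative of $u$ through the weak formulation, not $\frac{d}{dt}I_\vth(u(t))$. I would first record the energy identity of Theorem \ref{thm5} as $\int_0^t\!\int_\Om u_\tau^2 + J_\vth(u(t)) = J_\vth(u_0)$, so that $t\mapsto J_\vth(u(t))$ is non-increasing and $\frac{d}{dt}J_\vth(u(t))=-\int_\Om u_t^2$. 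Testing the equation with $u$ itself — the step that requires Lemma \ref{lem9}, both to legitimise the pairing against $\vth u^{-\de}$ and to guarantee $\int_\Om u^{1-\de}<\infty$ (finiteness being exactly the content of $\de<2+1/(p-1)$ combined with $u(t)\in\mc C_\de$) — yields $\int_\Om u\,u_t = -I_\vth(u(t))$. Hence
\[
M'(t) = 2\Big(\int_0^t\!\!\int_\Om u\,u_s + \beta(t+\tau_0)\Big), \qquad M''(t) = -2\,I_\vth(u(t)) + 2\beta .
\]

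The algebraic heart of the proof is the identity
\[
rJ_\vth(u) - I_\vth(u) = \Big(\tfrac rp-1\Big)\!\int_\Om|\na u|^p + \Big(\tfrac rq-1\Big)\!\int_\Om|\na u|^q + \vth\Big(1-\tfrac r{1-\de}\Big)\!\int_\Om u^{1-\de} + \int_\Om\!\big(u f(u)-rF(u)\big).
\]
By (f3) the last integral is non-negative, and since $r\ge p>q$ the two gradient terms are non-negative. For $\de>1$ (case (ii)) the factor $1-\frac r{1-\de}$ is positive, so the whole right-hand side is $\ge 0$; combined with $J_\vth(u(t))\le J_\vth(u_0)\le 0$ this gives $-I_\vth(u(t)) \ge -rJ_\vth(u(t)) = -rJ_\vth(u_0)+r\int_0^t\!\int_\Om u_s^2 \ge r\int_0^t\!\int_\Om u_s^2$. (When $J_\vth(u_0)=0$ one checks first that $I_\vth(u_0)<0$ for $u_0\neq0$ — since $I_\vth(u)=0$ forces $J_\vth(u)>0$ — and restarts at a small $t_1$ where $J_\vth(u(t_1))<0$.) For $\de\le 1$ (case (i)) the singular factor has the wrong sign, so the term $\vth(\tfrac r{1-\de}-1)\int_\Om u^{1-\de}$ must be absorbed into the coercive gradient terms via Young's and Sobolev inequalities; this leaves only a constant multiple of $\vth$, which the hypothesis $J_\vth(u_0)\le\Theta_\vth<0$ dominates once $\vth<\vth_*$, with $\vth_*$ depending only on $u_0$ (the technical embedding $2N/(N-2)\le q$ enters here, and the case $\de=1$ is identical with $\log$ replacing the power). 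Monotonicity of $J_\vth$ together with the definition of $\Theta_\vth$ and $I_\vth(u_0)<0$ keeps the trajectory in the unstable set $\{I_\vth<0\}$ throughout.

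The concavity estimate then closes as follows. Writing $\mc P(t)=\int_0^t\!\int_\Om u\,u_s + \beta(t+\tau_0)$, Cauchy--Schwarz on $L^2((0,t)\times\Om)\oplus\mb R$ applied to the vectors $\big(u,\sqrt\beta\,(t+\tau_0)\big)$ and $\big(u_s,\sqrt\beta\big)$ gives
\[
\mc P(t)^2 \le \Big(\int_0^t\|u\|_2^2 + \beta(t+\tau_0)^2\Big)\Big(\int_0^t\|u_s\|_2^2 + \beta\Big) \le M(t)\Big(\int_0^t\|u_s\|_2^2 + \beta\Big),
\]
the last inequality because $(T_1-t)\|u_0\|_2^2\ge 0$. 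Choosing $\alpha=\tfrac r2-1>0$ (this is where $r>2$ is used) and combining the lower bound for $M''$ with $(M')^2=4\mc P^2$, all the $\int_0^t\|u_s\|_2^2$ contributions cancel and one obtains $M(t)M''(t)-(1+\alpha)(M'(t))^2 \ge 2M(t)\big(-rJ_\vth(u_0)-(r-1)\beta\big)\ge 0$ after taking $0<\beta\le -rJ_\vth(u_0)/(r-1)$ (any $\beta>0$ when $J_\vth(u_0)<0$). Since $M(0)>0$ and $M'(0)=2\beta\tau_0>0$, the standard concavity lemma forces $M(t)\to\infty$ at some finite $T^*\le M(0)/(\alpha M'(0))$. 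Because $M$ stays finite as long as $u$ remains bounded in $L^\infty$, this is incompatible with a globally bounded solution, so the continuation alternative noted after Theorem \ref{thm7} yields $\|u(t)\|_{L^\infty(\Om)}\to\infty$ at a finite time.

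The main obstacle is the singular term: making the pairing $\int_\Om u^{-\de}u$ rigorous and finite (which is where Lemma \ref{lem9} and the sharp range $\de<2+1/(p-1)$ are indispensable), and, in the regime $\de\le 1$, absorbing the wrong-sign contribution $\vth\int_\Om u^{1-\de}$ into the coercive gradient terms — exactly what forces the smallness threshold $\vth<\vth_*$ and the embedding hypothesis $q\ge 2N/(N-2)$. Everything downstream of the lower bound for $-I_\vth(u(t))$ is routine bookkeeping around the concavity inequality.
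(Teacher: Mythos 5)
Your proposal is correct in substance and follows the same skeleton as the paper — energy identity justified via Lemma \ref{lem9}, testing with $u$ to get $\int_\Om u u_t=-I_\vth(u)$, the coercivity of $rJ_\vth(u)-I_\vth(u)$ (the paper writes this with a parameter $\la\in[1/r,1/p]$, your choice corresponds to $\la=1/r$), unstable-set/Nehari invariance in case (i), and a concavity argument on a time-integrated $L^2$ quantity, concluded by contradiction with the continuation alternative — but the concavity step is implemented differently. The paper works with the bare functional $M(t)=\frac12\int_0^t\|u(s)\|_{L^2(\Om)}^2\,ds$ and must split into two cases: for $p>2$ it derives $\sigma (M')^2\le M''M$ with $\sigma\in(1,p/2)$ only for large $t$ (after showing $M'(t)\ra\infty$), while for $p\le 2$ it switches to $\la=1/p$ and gets the ODE $C_r\,M'(t)^{r/2}\le M''(t)$, using $r>2$. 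Your classical Levine augmentation $M(t)=\int_0^t\|u\|_2^2+(T_1-t)\|u_0\|_2^2+\beta(t+\tau_0)^2$ with $\alpha=\frac r2-1$ makes the $\int_0^t\|u_s\|_2^2$ contributions cancel exactly in $MM''-(1+\alpha)(M')^2$, yielding a single unified argument for all $p$ from the sole hypothesis $r>2$; it also forces you to treat the borderline $J_\vth(u_0)=0$ in case (ii) explicitly (your restart via $I_\vth(u_0)<0$, which follows from $rJ_\vth-I_\vth\ge(\frac rq-1)\|\na u\|_q^q>0$ for $u_0\ne 0$, is sound), something the paper glosses over since there any $J_\vth(u_0)\le 0$ feeds directly into its inequality (3.95)-analogue. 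Your route is arguably cleaner and more self-contained; the paper's buys slightly more asymptotic information ($M''$ bounded below, $M'\ra\infty$) en route.

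One step in your case (i) is imprecise as stated: absorbing $\vth\big(\frac r{1-\de}-1\big)\int_\Om u^{1-\de}$ by Young's inequality ``leaves a constant multiple of $\vth$'' of size $C\vth^{q/(q-1+\de)}$, and you propose to dominate it by $-rJ_\vth(u_0)\ge -r\Theta_\vth$. But $-\Theta_\vth$ itself scales like $\vth^{q/(q-1+\de)}$ as $\vth\ra 0$ (optimize $\frac{s^q}{q}\|\na u\|_q^q-\frac{\vth s^{1-\de}}{1-\de}\int u^{1-\de}$ in $s$), so the two quantities are of the same order and there is no comparison of constants available: smallness of $\vth$ does not help. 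The correct mechanism — which the paper uses, and which you already have in hand via the unstable-set invariance you invoke — is that $I_\vth(u(t))<0$ gives $\frac{d}{dt}\|u(t)\|_2^2>0$, hence $\|u(t)\|_2\ge\|u_0\|_2>0$; then, since $q>1-\de$, the pointwise-in-time comparison $\big(\frac rq-1\big)\|\na u\|_q^q\ge \vth\big(\frac r{1-\de}-1\big)\int_\Om u^{1-\de}$ holds with no remainder at all once $\vth<\vth_*$ with $\vth_*$ an explicit multiple of $\|u_0\|_2^{q-1+\de}$ (this is also where the embedding hypothesis on $q$ enters, exactly as in the paper). With that substitution your argument closes completely.
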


\section{Existence and regularity results for stationary problems}
 In this section, we prove the existence and uniqueness of solution to stationary problems associated to $(P_t)$ using the method of sub and super solution. Furthermore, using Diaz-S\'aa inequality, we obtain the comparison principle as in Theorem \ref{cmp}. Moreover, we prove higher Sobolev integrability of solution to the purely singular problem, see Theorem \ref{sob-int}.\\
 To construct a suitable subsolution for the case $\de<1$, we recall the following proposition proved by Papageorgiou et al. \cite{papageorg}. 
  For this purpose, we define the following set
 \[ \text{int }C_+:= \big\{ u\in C^1(\ov\Om): u>0 \mbox{ in }\Om, u=0 \mbox{ on }\pa\Om, \frac{\pa u}{\pa\nu} \Big|_{\pa\Om}<0 \big\}.\]
 \begin{Lemma}\cite[Proposition 10]{papageorg} \label{lemsub}
  For all $\rho>0$, there exists a unique solution $\tl u_\rho\in\text{int } C_+$ to the following problem
 	\begin{align}\label{eqS}
 	-\De_p u -\De_q u =\rho \quad \mbox{in }\Om, \quad u=0 \ \ \mbox{on }\pa\Om.
 	\end{align}
  Furthermore, the map $\rho\mapsto \tl u_\rho$ is increasing from $(0,1]$ to $C^1_0(\ov\Om)$ and $\tl u_\rho\ra 0$ in $C^1_0(\ov\Om)$ as $\rho\ra 0^+$.
 \end{Lemma}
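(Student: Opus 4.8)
The plan is to combine a variational existence argument with regularity theory and a comparison principle. First I would obtain existence and uniqueness of a weak solution in $W^{1,p}_0(\Om)$ by the direct method applied to the energy functional
\begin{align*}
E_\rho(u):=\frac{1}{p}\int_\Om |\na u|^p + \frac{1}{q}\int_\Om |\na u|^q - \rho\int_\Om u.
\end{align*}
This functional is coercive on $W^{1,p}_0(\Om)$ (since $p>q$ the $p$-term dominates at large gradients, and the Poincar\'e inequality controls the linear term), strictly convex, and weakly lower semicontinuous, so it admits a unique minimizer $\tl u_\rho$, which is the unique weak solution; equivalently, uniqueness follows from the strict monotonicity of the operator $-\De_p-\De_q$ on $W^{1,p}_0(\Om)$.

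Next I would address regularity and sign. Since the datum $\rho$ is a bounded constant, a De Giorgi--Moser iteration gives $\tl u_\rho\in L^\infty(\Om)$, and then the global regularity theory of Lieberman for the $(p,q)$-Laplacian upgrades this to $\tl u_\rho\in C^{1,\al}(\ov\Om)$ for some $\al\in(0,1)$. Because $\rho>0$, the strong maximum principle for $-\De_p-\De_q$ forces $\tl u_\rho>0$ in $\Om$, and V\'azquez's Hopf-type lemma gives $\pa\tl u_\rho/\pa\nu<0$ on $\pa\Om$; hence $\tl u_\rho\in\text{int } C_+$. For the monotonicity, I would invoke the weak comparison principle for $-\De_p-\De_q$: if $0<\rho_1\le\rho_2\le 1$, then $\tl u_{\rho_1}$ is a subsolution of the problem with datum $\rho_2$, so $\tl u_{\rho_1}\le\tl u_{\rho_2}$ in $\Om$, which shows $\rho\mapsto\tl u_\rho$ is order-preserving into $C^1_0(\ov\Om)$.

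Finally, for the convergence as $\rho\ra 0^+$, testing the equation against $\tl u_\rho$ gives
\begin{align*}
\int_\Om |\na \tl u_\rho|^p + \int_\Om |\na \tl u_\rho|^q = \rho\int_\Om \tl u_\rho \le \rho\, C\,\|\tl u_\rho\|_{W^{1,p}_0(\Om)},
\end{align*}
so $\|\tl u_\rho\|_{W^{1,p}_0(\Om)}\ra 0$ as $\rho\ra 0$. The step I expect to be the chief obstacle is upgrading this $W^{1,p}$-convergence to convergence in $C^1_0(\ov\Om)$: this requires the $C^{1,\al}(\ov\Om)$ bounds to be uniform as $\rho\ra 0$, which holds because Lieberman's estimate depends on $\rho$ only through $\|\rho\|_{L^\infty(\Om)}$ and hence stays bounded. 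Granting this, the compact embedding $C^{1,\al}(\ov\Om)\hookrightarrow C^1(\ov\Om)$ shows that every sequence $\rho_n\ra 0$ has a subsequence along which $\tl u_{\rho_n}\ra w$ in $C^1_0(\ov\Om)$; since $\tl u_{\rho_n}\ra 0$ in $W^{1,p}_0(\Om)$ we must have $w=0$, and uniqueness of the limit yields $\tl u_\rho\ra 0$ in $C^1_0(\ov\Om)$, completing the proof.
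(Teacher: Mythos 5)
Your proof is correct, and it follows essentially the same route as the source: note that the paper itself does not prove this lemma but imports it verbatim from \cite[Proposition 10]{papageorg}, whose proof proceeds exactly along your lines (direct method for the strictly convex, coercive energy; Moser iteration plus Lieberman's global $C^{1,\al}(\ov\Om)$ theory; the strong maximum principle and a V\'azquez--Hopf boundary lemma for $-\De_p-\De_q$; weak comparison for monotonicity; and uniform $C^{1,\al}$ bounds plus compactness for the convergence $\tl u_\rho\ra 0$ in $C^1_0(\ov\Om)$). The one point worth making explicit is that the uniform $C^{1,\al}(\ov\Om)$ bound for $\rho\in(0,1]$ requires a uniform $L^\infty$ bound on $\tl u_\rho$ in addition to the bound on the datum; this is supplied by your own monotonicity step, since $\tl u_\rho\le\tl u_1$ in $\Om$ for all $\rho\in(0,1]$.
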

\begin{Corollary}\label{cor1}
	We have $\frac{\tl u_\rho}{d}\ra 0$ uniformly in $\Om$, as $\rho\ra 0$. 
\end{Corollary}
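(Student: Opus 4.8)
The plan is to upgrade the $C^1_0(\ov\Om)$ convergence $\tl u_\rho\ra 0$ furnished by Lemma \ref{lemsub} to the ratio statement $\tl u_\rho/d\ra 0$ uniformly. The only extra ingredient is the elementary observation that, for a $C^1$ function vanishing on a $C^2$ boundary, the quotient by the distance function is controlled by the gradient near $\pa\Om$ and by the sup-norm away from it. Concretely, I would first fix the tubular-neighbourhood width: since $\pa\Om$ is $C^2$, there is $\eta_0>0$ such that $d$ is $C^2$ on the collar $\Om_{\eta_0}:=\{x\in\Om:d(x)<\eta_0\}$ and every $x\in\Om_{\eta_0}$ has a unique nearest boundary point $p(x)$, with the closed segment $[p(x),x]\subset\ov\Om$. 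Then I would split $\Om$ into this collar and the interior core $\{d\ge\eta_0\}$ and estimate the ratio on each piece separately.

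On the core, the bound is immediate: for $d(x)\ge\eta_0$ one has
\[ \frac{\tl u_\rho(x)}{d(x)}\le \frac{\|\tl u_\rho\|_{L^\infty(\Om)}}{\eta_0}, \]
which tends to $0$ because $\|\tl u_\rho\|_{L^\infty(\Om)}\le \|\tl u_\rho\|_{C^1_0(\ov\Om)}\ra 0$. On the collar, writing $\omega:=(x-p(x))/|x-p(x)|$ for the inward unit direction and using $\tl u_\rho(p(x))=0$, the fundamental theorem of calculus along the admissible segment gives
\[ \tl u_\rho(x)=\int_0^{d(x)} \na\tl u_\rho\big(p(x)+t\,\omega\big)\cdot\omega\,dt, \]
whence $\tl u_\rho(x)\le d(x)\,\|\na\tl u_\rho\|_{L^\infty(\Om)}$ and therefore
\[ \frac{\tl u_\rho(x)}{d(x)}\le \|\na\tl u_\rho\|_{L^\infty(\Om)}\le \|\tl u_\rho\|_{C^1_0(\ov\Om)}\ra 0. \]
Since $\tl u_\rho>0$ and $d>0$ in $\Om$, combining the two estimates gives $0<\sup_\Om \tl u_\rho/d\ra 0$ as $\rho\ra 0$, which is exactly the claim.

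The two displayed bounds are routine; the only point requiring care is the geometric set-up on the collar, namely that for a $C^2$ domain there is a fixed-width tubular neighbourhood on which the nearest-point projection is well defined and the segment $[p(x),x]$ remains in $\ov\Om$, so that the line integral above is legitimate. This is standard for $C^2$ boundaries, so I do not anticipate a genuine obstacle; the substance of the corollary is simply that $C^1_0(\ov\Om)$ convergence to zero automatically forces the boundary-normalized quotient $\tl u_\rho/d$ to zero as well.
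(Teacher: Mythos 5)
Your proof is correct and is precisely the argument the paper leaves implicit: Corollary \ref{cor1} is stated without proof as an immediate consequence of the $C^1_0(\ov\Om)$ convergence in Lemma \ref{lemsub}, via the standard bound $\tl u_\rho(x)\le \|\na \tl u_\rho\|_{L^\infty(\Om)}\,d(x)$ obtained by integrating along the segment to the nearest boundary point. (As a minor aside, that segment lies in $\ov\Om$ for any domain, not just $C^2$ ones, since otherwise it would cross $\pa\Om$ at a point strictly closer to $x$ than $d(x)$, so even your collar decomposition is more care than strictly needed.)
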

Below we mention results which will be used in the sequel to construct supersolutions.
\begin{Proposition}\label{sup}
 Let $M>1$, $L,l\ge 0$ and $\de<1$. Then,
 there exists $v_M\in W^{1,p}_0(\Om)\cap\mc C_\de$, solution to the following problem: 
 	 \begin{equation}\label{eq52}
 	  (\mc M)\; \left\{\begin{array}{rllll}
 	  -\Delta_{p}u -\Delta_{q}u & = M u^{-\de}+l u^{q-1}+L, \; u>0 \text{ in } \Om, \\ u&=0 \quad \text{ on }  \pa\Om.
 	  \end{array}
 	 \right.
 	 \end{equation}
 Moreover, $\frac{v_M}{d}\ra \infty$  uniformly in $\Om$, as $M\ra\infty$. 
\end{Proposition}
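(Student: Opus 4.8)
The plan is to separate the statement into (a) existence of $v_M\in W^{1,p}_0(\Om)\cap\mc C_\de$ solving $(\mc M)$, and (b) the blow‑up $v_M/d\to\infty$, and to handle the non‑homogeneity of $A_{p,q}=-\De_p-\De_q$ throughout by comparison with the constant‑source torsion functions $\tl u_\rho$ of Lemma \ref{lemsub}.

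For existence I would use the method of ordered sub‑ and supersolutions. As a subsolution take $\ul u=\tl u_{\rho_0}$, the solution of $-\De_p u-\De_q u=\rho_0$: since $\tl u_{\rho_0}\to0$ uniformly as $\rho_0\to0$ (Lemma \ref{lemsub}), for $\rho_0$ small one has $\rho_0\le M\|\tl u_{\rho_0}\|_{L^\infty(\Om)}^{-\de}\le M\ul u^{-\de}+l\ul u^{q-1}+L$, so $\ul u$ is a subsolution, and $\ul u\in\text{int } C_+$ gives $\ul u\ge c\,d$. As a supersolution I would take the solution $\ov u$ of an auxiliary purely singular problem $-\De_p u-\De_q u=\mu u^{-\de}+C_0$ with $\mu\ge M$ and $C_0$ large, whose existence and membership in $\mc C_\de\cap W^{1,p}_0(\Om)$ follow from the theory of \cite{JDS}; because $q-1<p-1$, the subhomogeneous term $l\ov u^{q-1}+L$ is absorbed by $C_0$ for $C_0$ large, so $\ov u$ is a supersolution with $\ul u\le\ov u$. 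A standard construction (truncation together with a pseudomonotone/monotone‑iteration argument, the singular datum being controlled on $[\ul u,\ov u]$ since $\ul u\ge c\,d$ and $\de<1$) then yields a weak solution $v_M\in W^{1,p}_0(\Om)$ with $\ul u\le v_M\le\ov u$; the sandwich immediately gives $c_1 d\le v_M\le c_2 d$, i.e. $v_M\in\mc C_\de$. Finally, since both $s\mapsto(Ms^{-\de}+ls^{q-1}+L)/s^{p-1}$ and its analogue with $s^{q-1}$ are nonincreasing, the D\'iaz--S\'aa comparison principle (Theorem \ref{cmp}) applies and yields $v_M\ge\psi$ for every subsolution $\psi$ of $(\mc M)$.

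For the asymptotics I would again compare with the torsion functions. Set $\rho(M):=c_*\,M^{(p-1)/(p-1+\de)}$ with $c_*$ small. Using $l,L\ge0$, the function $\tl u_{\rho(M)}$ is a subsolution of $(\mc M)$ as soon as $\rho(M)\le M\|\tl u_{\rho(M)}\|_{L^\infty(\Om)}^{-\de}$; since (next step) $\|\tl u_\rho\|_{L^\infty(\Om)}$ grows like $\rho^{1/(p-1)}$, this holds for $c_*$ small and all large $M$, and the comparison principle above gives $v_M\ge\tl u_{\rho(M)}$. To capture the growth of $\tl u_\rho$ I would rescale: with $z_\rho:=\rho^{-1/(p-1)}\tl u_\rho$ and $\e_\rho:=\rho^{(q-p)/(p-1)}$, the function $z_\rho$ solves $-\De_p z_\rho+\e_\rho(-\De_q z_\rho)=1$ in $\Om$, $z_\rho=0$ on $\pa\Om$, with $\e_\rho\to0$ as $\rho\to\infty$. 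Testing with $z_\rho$ gives a uniform $W^{1,p}_0(\Om)$ bound, hence a uniform $L^\infty$ bound, and since the operators $\xi\mapsto|\xi|^{p-2}\xi+\e|\xi|^{q-2}\xi$ satisfy the structure conditions of the $C^{1,\al}$ theory uniformly for $\e\in[0,1]$, the family $\{z_\rho\}$ is bounded in $C^{1,\al}(\ov\Om)$. By Arzel\`a--Ascoli and uniqueness for the limit equation $-\De_p z=1$, $z=0$ on $\pa\Om$, we get $z_\rho\to\phi_p$ in $C^1(\ov\Om)$, where $\phi_p\in\text{int } C_+$ is the $p$‑torsion function; as $\phi_p/d\ge c_0>0$ on $\ov\Om$, the $C^1$‑convergence yields $z_\rho/d\ge c_0/2$ for large $\rho$. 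Hence $\tl u_\rho/d=\rho^{1/(p-1)}z_\rho/d\ge(c_0/2)\rho^{1/(p-1)}\to\infty$, and combining with $v_M\ge\tl u_{\rho(M)}$ gives $v_M/d\ge c\,M^{1/(p-1+\de)}\to\infty$ uniformly in $\Om$.

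The main obstacle is exactly the non‑homogeneity of $A_{p,q}$: there is no exact scaling, so the growth rate of the solution in $M$ cannot be read off directly, and a naive pointwise‑scaled subsolution fails because the sign of $\De_q$ of the comparison function is not controlled. The device that circumvents this is to transfer the whole lower bound to the constant‑source problems $\tl u_\rho$ and to extract their rate through the rescaling in which the $q$‑Laplacian coefficient $\e_\rho$ degenerates to $0$, reducing everything to the homogeneous $p$‑torsion function. A secondary technical point is the supersolution: because the singular term forces the right‑hand side of $(\mc M)$ to blow up like $d^{-\de}$ near $\pa\Om$, no bounded (constant‑source) supersolution can work, and one must use a genuinely singular comparison function provided by \cite{JDS}.
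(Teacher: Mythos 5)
Your proposal is correct in substance but takes a genuinely different route from the paper on both halves of the statement. For existence, the paper does not run a sub/supersolution iteration: it truncates the singular term below the solution $\ul v$ of $-\De_p v-\De_q v=v^{-\de}$ (from \cite[Theorem 1.4]{JDS}), minimizes the truncated energy directly (coercivity coming from $q<p$, $l\ge 0$ and $\de<1$), uses \cite[Lemma A2]{giacomoni} for G\^ateaux differentiability, and removes the truncation by comparison with $\ul v$; your ordered sub/supersolution scheme is an equivalent alternative, though your supersolution step (``$l\ov u^{q-1}+L$ is absorbed by $C_0$'') silently uses an a priori bound of the form $\|\ov u\|_{L^\infty(\Om)}\le C\,C_0^{1/(p-1)}$, which you should state and justify (Moser iteration as in \cite[Lemma 3.2]{dpk}), since $\ov u$ itself depends on $C_0$. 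For the asymptotics, the paper rescales the solution itself, $w_M=M^{-1/(p-1+\de)}v_M$, derives $M$-independent $L^\infty$, $w_M\le C_2\,d$ and $C^{1,\ga}(\ov\Om)$ bounds from the singular regularity theory of \cite{JDS}, and identifies the $C^1(\ov\Om)$ limit as the unique positive solution of $-\De_p w=w^{-\de}$; you instead slide a family of torsion functions $\tl u_{\rho(M)}$, $\rho(M)\sim M^{(p-1)/(p-1+\de)}$, below $v_M$ via Theorem \ref{cmp} and read off the rate from the rescaled torsion problem, whose limit is the $p$-torsion function. Both arguments yield the same rate $v_M/d\gtrsim M^{1/(p-1+\de)}$. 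What your route buys is elementariness: the compactness step needs only uniform $C^{1,\al}$ theory for equations with \emph{bounded} right-hand side, not uniform estimates for singular problems; the price is that you obtain only a lower bound on $v_M$ (which suffices here) rather than the sharp limiting profile. Two details to tighten: when $q<2\le p$ the uniform-in-$\e$ regularity claim requires Lieberman's generalized structure conditions \cite{liebm91} with $g_\e(t)=t^{p-1}+\e t^{q-1}$ (uniform since $q-1\le t g_\e'(t)/g_\e(t)\le p-1$), because $|\pa_\xi A_\e(\xi)|$ behaves like $\e|\xi|^{q-2}$ near $\xi=0$ and is not of pure $p$-type; and in the application of Theorem \ref{cmp} you should check hypothesis \eqref{eq51} with $w=d$, which holds because $\tl u_{\rho}\in\text{int }C_+$ and $v_M\in\mc C_\de$ with $\de<1$ are both comparable to $d$ and $\int_\Om d^{1-\de}\,dx<\infty$.
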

\begin{proof}
We first consider the following problem:
 \begin{align*}
 -\De_p v -\De_q v = v^{-\de} \quad \mbox{in }\Om, \quad
 v=0 \quad \mbox{on }\pa\Om.
 \end{align*}
 Then, by \cite[Theorem 1.4]{JDS} with $\beta=0$, there exists a unique positive solution $\ul v\in W^{1,p}_0(\Om)\cap\mc C_\de$. Next, set 
 \begin{align*}
 \tl h(x,v(x)):= \begin{cases}
 v(x)^{-\de} \quad \mbox{if }v(x)\ge \ul v(x) \\
 \ul v(x)^{-\de} \quad \mbox{otherwise}
 \end{cases}
 \end{align*}
 and $\tl H(x,t):=\int_{0}^{t} \tl h(x,s)ds$.
 Define $E: W^{1,p}_0(\Om) \to\mb R$ as 
 \begin{align*}
 E(u)= \frac{1}{p} \int_{\Om} |\na u|^p dx+\frac{1}{q} \int_{\Om} \big(|\na u|^q- l|u|^q\big) dx- M\int_{\Om} \tl H(x,u) dx-L\int_{\Om} u dx.
 \end{align*}
 Due to the fact that  $l \ge 0$, $\de<1$ and $q<p$, $E$ is coercive and weakly lower semi continuous in $W^{1,p}_0(\Om)$, and hence bounded below. Let $\{v_n\}\in W^{1,p}_0(\Om)$ be a minimizing sequence for $E$. Since $\inf_{u\in W^{1,p}_0(\Om)} E(u)<0$, it is easy to prove that the sequence $\{v_n\}$ is bounded in $W^{1,p}_0(\Om)$. Therefore, up to a subsequence, we have the following
 \begin{align*}
 v_n \rightharpoonup v_M \quad \mbox{ weakly in } W^{1,p}_0(\Om) \quad \mbox{and } v_n \ra v_M \quad \mbox{in }L^m(\Om), \mbox{ for }1\le m<p^*,
 \end{align*} 
 where $p^*:= Np/(N-p)$ for $p<N$ and $p^*<\infty$ otherwise. Using weak lower semicontinuity of norms and the aforementioned compactness results, we get that $v_M$ is a global minimizer for $E$ in $W^{1,p}_0(\Om)$. By \cite[Lemma A2]{giacomoni}, we know that $E$ is G\^ ateaux differentiable and thus $v_M$ satisfies 
 \begin{equation*}
 \left\{\begin{array}{rllll}
  -\De_p v_M -\De_q v_M = M \tl h(x,v_M) +l v_M^{q-1} +L \ \ \mbox{ in }\Om; \quad
  v_M=0 \ \mbox{ on }\pa\Om.
 \end{array}
 \right. 
 \end{equation*} 
 Employing comparison principle, we obtain $v_M\ge \ul v$, therefore $v_M$ is a weak solution of $(\mc M)$. Next, since $\de<1$, from \cite[Lemma 3.2]{dpk}, we obtain $v_M\in L^\infty(\Om)$. Consequently, using \cite[Remark 2.8]{JDS}, we get that $v_M\in\mc C_\de$.
 For the second part, we divide the equation in $(\mc M)$ by $M$ and substitute $w_M=M^{-1/(p-1+\de)}v_M$, then 
 \begin{equation}\label{eq50}
  \left\{\begin{array}{rllll}
  -\De_p w_M -M^\frac{q-p}{p-1+\de}\De_q w_M& = w_M^{-\de} +l \; M^\frac{q-p}{p-1+\de} w_M^{q-1} +L \;M^\frac{1-p}{p-1+\de} \quad \mbox{in }\Om, \\
  w_M&=0 \quad \mbox{on }\pa\Om.
  \end{array}
  \right. 
 \end{equation} 
 Due to the fact $M>1$, we note that the coefficients of terms on the right side of \eqref{eq50} are bounded above by quantities which are independent of $M$. Therefore, careful reading of \cite[Theorem 2, P. 361]{He} and \cite[Lemma 3.2]{dpk} implies that $\|w_M\|_{ L^\infty(\Om)}\leq C_1$, where $C_1>0$ is a constant independent of $M$. Now, using Remark 2.8 and Proposition 2.7 of \cite{JDS}, we get $w_M\leq C_2 d$ in $\Om$, where $C_2>0$ is a constant independent of $M$. Thus, invoking \cite[Theorem 1.7]{JDS}, we obtain $\| w_M \|_{C^{1,\ga}(\ov\Om)}\le C_3$, where $\ga\in(0,1)$ and $C_3>0$ is a constant independent of $M$. Therefore, there exists $w\in C^1(\ov\Om)$ such that $w_M\ra w$ as $M\ra\infty$. By uniqueness of solution (see \cite[Lemma 3.1]{giacomoni}) to the problem
 \begin{equation*}
 	\left\{\quad-\De_p u = u^{-\de},\ \ u>0 \quad\mbox{in }\Om; \qquad u=0 \quad \mbox{on }\pa\Om,\right.
 \end{equation*}
 we get that $w\not\equiv 0$ in $\Om$ and it satisfies the above equation. Due to the convergence of $w_M$ to $w$ in $C^1(\ov\Om)$, we conclude that 
 \[ \frac{w_M}{d} \ra \frac{w}{d} \quad\mbox{ uniformly in }\Om, \ \mbox{ as }M\ra\infty. \] 
 Since $w> 0$ in $\Om$, we get the required result of the proposition by using definition of $w_M$. \QED
\end{proof}
Next, we consider the case $\de \ge 1$.
In what follows, we fix the constants $c_1,c_2$ appearing in the definition of $\mc C_\de$ for $u_0$ as $k_1$ and $k_2$, respectively. That is, for the case $\de>1$,
\begin{align}\label{init}
 k_1 d(x)^{p/(p-1+\de)} \le u_0(x) \le k_2 d(x)^{p/(p-1+\de)} \quad\mbox{in }\Om.	
\end{align} 
\begin{Proposition}\label{prop3}
 Let $M\ge 1$, $L\geq 0$, $\rho>0$ and $1\leq\de<2+1/(p-1)$. Then,
	\begin{enumerate}
	  \item[(a)] There exists $v_\rho\in W^{1,p}_0(\Om)\cap\mc C_\de$, solution of  the following equation:       
		\begin{equation}\label{eq72}
		\left\{\begin{array}{rllll}
		-\De_p v -\De_q v& =\rho v^{-\de},  \quad v>0 \quad \mbox{in }\Om, \\
		v&=0 \quad \mbox{on }\pa\Om,
		\end{array}
		\right. 
		\end{equation}
	   such that the sequence $v_\rho \ra 0$ uniformly in  $K$, as $\rho\ra 0$, for every compact subset $K$ of $\Om$. Moreover, for sufficiently small $\rho>0$, the following holds
	  \begin{align}\label{eq78}
	  	v_\rho(x) \leq \begin{cases}
	  	  k_1 d(x)^\frac{p}{p-1+\de}  \ \mbox{ if }\de>1\\
	  	  k_1 d(x)\log(A/d(x)) \ \mbox{ if }\de=1
	  	\end{cases}\quad\mbox{in }\Om,
	  \end{align}
	  where $k_1$ is as in \eqref{init} and $A$ is large enough constant.
	  \item[(b)]  Consider the problem:  
	  \begin{equation}\label{eq71}
	  \left\{\begin{array}{rllll}
	  -\Delta_{p}u -\Delta_{q}u & = M u^{-\de}, \; u>0 \text{ in } \Om, \\ u&=0 \quad \text{ on }  \pa\Om.
	  \end{array}
	  \right.
	  \end{equation}
 Then, there exists $u_M\in W^{1,p}_0(\Om)\cap\mc C_\de$, solution of \eqref{eq71} such that the sequence $u_M\ra \infty$ uniformly in $K$, as $M\ra\infty$, for every compact subset $K$ of $\Om$. Furthermore, for sufficiently large $M$, the following holds
 \begin{align}\label{eq79}
  u_M(x) \geq \begin{cases}
  k_2 d(x)^\frac{p}{p-1+\de}  \ \mbox{ if }\de>1\\
  k_2 d(x)\log(A/d(x)) \ \mbox{ if }\de=1
  \end{cases}\quad\mbox{in }\Om,
 \end{align}
 where the constant $k_2$ appears in \eqref{init} and $A$ is large enough.
\end{enumerate}
\end{Proposition}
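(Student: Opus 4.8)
The plan is to treat the existence and the $\mc C_\de$-membership of $v_\rho$ and $u_M$ as a direct consequence of the purely singular stationary theory, and then to read off the two asymptotic regimes from complementary rescalings together with comparison. For existence I would invoke \cite[Theorem 1.4]{JDS} exactly as in the proof of Proposition \ref{sup}: for each fixed $\rho>0$ (resp. $M\ge1$) the right-hand side $\rho\,v^{-\de}$ (resp. $M u^{-\de}$) is of the admissible purely singular type, so \eqref{eq72} and \eqref{eq71} admit a unique solution $v_\rho$ (resp. $u_M$) in $W^{1,p}_0(\Om)\cap\mc C_\de$ throughout the range $1\le\de<2+1/(p-1)$. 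Uniqueness together with the comparison principle \cite[Theorem 1.5]{JDS} also gives the monotonicity of $\rho\mapsto v_\rho$ and $M\mapsto u_M$, so that the limits as $\rho\ra0$ and $M\ra\infty$ exist; the remaining quantitative content is what I extract below. The essential point is that $\De_p$ and $\De_q$ play asymmetric roles in the two regimes.

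For part (b) I would follow the rescaling already used for Proposition \ref{sup}: setting $w_M:=M^{-1/(p-1+\de)}u_M$ transforms \eqref{eq71} into
\[ -\De_p w_M - M^{\frac{q-p}{p-1+\de}}\De_q w_M = w_M^{-\de}\quad\text{in }\Om,\qquad w_M=0\ \text{on }\pa\Om, \]
which is precisely \eqref{eq50} with $l=L=0$; the $\De_q$-coefficient lies in $(0,1]$ for $M\ge1$ and tends to $0$ as $M\ra\infty$. The uniform estimates of \cite{He} and \cite[Remark 2.8, Proposition 2.7, Theorem 1.7]{JDS}, applied verbatim as in Proposition \ref{sup}, then bound $w_M$ independently of $M$ and yield $w_M\ra w$, where $w$ is the nontrivial solution of the purely $p$-singular problem $-\De_p w=w^{-\de}$, so that $w\in\mc C_\de$ with $w\ge c_1 d^{p/(p-1+\de)}$. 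Undoing the scaling gives $u_M\ge \tfrac12 c_1\,M^{1/(p-1+\de)}\,d^{p/(p-1+\de)}$ for $M$ large, which simultaneously produces the uniform blow-up of $u_M$ on every compact $K$ (where $d\ge d_K>0$) and the boundary bound \eqref{eq79} once $\tfrac12 c_1 M^{1/(p-1+\de)}\ge k_2$; the case $\de=1$ is identical after replacing $d^{p/(p-1+\de)}$ by $d\log(A/d)$.

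Part (a) is more delicate because the natural rescaling here is the $q$-rescaling $v_\rho:=\rho^{1/(q-1+\de)}z_\rho$, under which \eqref{eq72} becomes
\[ \rho^{\frac{p-q}{q-1+\de}}(-\De_p z_\rho)-\De_q z_\rho = z_\rho^{-\de}\quad\text{in }\Om, \]
with $\De_p$-coefficient in $(0,1]$ tending to $0$ as $\rho\ra0$. The same uniform $L^\infty$-estimates bound $z_\rho$ independently of $\rho$, whence $v_\rho\le C\rho^{1/(q-1+\de)}\ra0$ uniformly, giving the uniform convergence $v_\rho\ra0$ on compact subsets. This interior scaling is governed by the $q$-exponent and does not by itself deliver the $p$-exponent boundary estimate \eqref{eq78}; for the latter I would argue directly by comparison with the barrier $\Phi:=k_1 d^{p/(p-1+\de)}$. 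A computation near $\pa\Om$ shows $-\De_p\Phi$ is comparable to $d^{-\de p/(p-1+\de)}$, exactly the order of $\Phi^{-\de}$, while the $\De_q$-contribution is of strictly lower order and favourable sign; hence there is $\rho_1=\rho_1(k_1)>0$ such that $\Phi$ is a supersolution of \eqref{eq72} in a boundary strip $\{d<d_0\}$ for all $\rho<\rho_1$. On $\{d=d_0\}$ one has $\Phi\equiv k_1 d_0^{p/(p-1+\de)}>0$ whereas $v_\rho\ra0$ uniformly there, so $v_\rho\le\Phi$ on $\{d=d_0\}$ for $\rho$ small; the comparison principle in the strip then gives $v_\rho\le\Phi$ near $\pa\Om$, and in $\{d\ge d_0\}$ the inequality $\Phi\ge k_1 d_0^{p/(p-1+\de)}\ge v_\rho$ is immediate, proving \eqref{eq78} (with the logarithmic profile for $\de=1$).

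The main obstacle is the boundary analysis in part (a): reconciling the fact that, as $\rho\ra0$, the solution follows the $q$-exponent profile in the interior yet must still obey the $p$-exponent estimate in the thin boundary layer where $|\na v_\rho|$ is large and $-\De_p$ dominates. Making the barrier rigorous requires care with the non-smoothness of $d$ away from $\pa\Om$ (handled by replacing $d$ by a smooth positive function in the interior) and with the lower-order $\De d$ and $\De_q$ terms in the supersolution test, with additional logarithmic corrections when $\de=1$. The corresponding technical point in part (b) is the uniform-in-$M$ lower $\mc C_\de$-estimate needed to pass from $w_M\ra w$ to the quantitative lower bound underlying \eqref{eq79}.
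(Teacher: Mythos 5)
Your overall architecture (existence and $\mc C_\de$-membership from the purely singular theory of \cite{JDS}, rescaling plus uniform estimates for the interior asymptotics, and a barrier of the form $k\,d^{p/(p-1+\de)}$ compared in a boundary strip) is the paper's architecture, but there is a genuine gap in part (b), precisely at the step you rely on for \eqref{eq79}. From $w_M\ra w$ and $w\ge c_1 d^{p/(p-1+\de)}$ you conclude $u_M\ge \tfrac12 c_1 M^{1/(p-1+\de)} d^{p/(p-1+\de)}$, i.e.\ $w_M\ge \tfrac12 c_1 d^{p/(p-1+\de)}$ for large $M$. This does not follow: the convergence you have (from the uniform $C^{0,\al}(\ov\Om)$ bound and Arzel\`a--Ascoli) controls the \emph{difference} $\|w_M-w\|_{L^\infty}$, not the \emph{ratio} $w_M/d^{p/(p-1+\de)}$. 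On the boundary layer where $d^{p/(p-1+\de)}$ is below any fixed $\eta$, the inequality $w_M\ge w-\|w_M-w\|_{L^\infty}$ is vacuous, and nothing in the compactness argument prevents $w_M$ from vanishing at a different rate there: each $w_M$ does lie in $\mc C_\de$, but with cone constants a priori depending on $M$ --- the paper flags exactly this point when introducing the rescaling (``here coefficients of the function $d^\tau$ may depend on $M$''). You acknowledge the missing ``uniform-in-$M$ lower $\mc C_\de$-estimate'' in your closing paragraph, but it is the crux of (b), not a technical afterthought. The paper closes it with the same strip-comparison device you yourself deploy in part (a), run in the opposite direction: $\ov w=k_2 d^{p/(p-1+\de)}$ satisfies $-\De_p\ov w-\De_q\ov w\le c(k_2)\,\ov w^{-\de}$ in a thin strip $\Om_\varrho$ (where $d\in C^2$ and $|\De d|\le D$), the uniform divergence $u_M\ra\infty$ on the compact set $\{d\ge\varrho\}$ gives $u_M\ge k_2 d^{p/(p-1+\de)}$ on the inner boundary once $M\ge M_1\ge c(k_2)$, and the weak comparison principle in the strip yields \eqref{eq79}; for $\de=1$ the same argument runs with $k_2 d\log(A/d)$ and the bounds of \cite[Lemma 2.3]{JDS}. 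Replacing your ``undoing the scaling'' step by this symmetric barrier argument repairs the proof; the uniform blow-up of $u_M$ on compacts, which you do obtain correctly from $w_M\ra w>0$, is exactly what feeds the inner-boundary ordering.

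Two further remarks on part (a). Your boundary estimate \eqref{eq78} is proved exactly as in the paper (there $\ul w=k_1 d^\tau$, $\tau=p/(p-1+\de)$, is shown to dominate $v_\rho$ in $\Om_\varrho$ via the computation $-\De_p\ul w-\De_q\ul w\ge c(k_1)\ul w^{-\de}$ and the requirement $\rho\le\rho_1\le c(k_1)$), so that portion is fine. Your \emph{interior} argument, however, genuinely differs: the paper does not rescale in $\rho$; it bounds $v_\rho\le v_1$ by comparison, extracts $\rho$-independent $C^{0,\al}(\ov\Om)$ bounds from the proof of \cite[Theorem 1.8]{JDS}, and identifies the Arzel\`a--Ascoli limit as $0$ because the limit equation $-\De_p v-\De_q v=0$ has only the trivial solution. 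Your $q$-rescaling $z_\rho=\rho^{-1/(q-1+\de)}v_\rho$ would give more --- a quantitative rate $v_\rho\le C\rho^{1/(q-1+\de)}$ and uniform smallness on all of $\Om$, not merely on compacts --- but the ``same uniform $L^\infty$-estimates'' are not literally available: you need the $q$-analogue of Step (i) of the paper's proof (the power substitution $w=z^\ga$ with $-\de+(\ga-1)(q-1)>0$ and a Moser iteration driven by the uniformly coercive $q$-structure, the $\rho$-weighted $p$-terms entering with favourable sign), which does go through since $\de<2+1/(p-1)<2+1/(q-1)$, but it is an argument to be written out, not a citation.
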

\begin{proof} 
\textbf{Proof of (a)}:\\
 Proceeding similar to the proof of \cite[Theorem 1.4]{JDS} for $\ba=0$, we get the existence of $v_\rho\in W^{1,p}_{loc}(\Om)\cap\mc C_\de$. Moreover, since $\de<2+1/(p-1)$, we have $v_\rho\in W^{1,p}_0(\Om)$ and the regularity result of \cite[Theorem 1.8]{JDS} implies $v_\rho\in C^{0,\al}(\ov\Om)$, for some $\al\in(0,1)$ (which may depend on $\rho$). 
 By weak comparison principle, we have $ v_\rho \leq v_1$ for all $\rho\in (0,1)$. Since $v_1\in \mc C_\de$, there exists $c_1>0$ such that 
\begin{align*}
0< v_\rho(x) \le v_1(x) \leq c_1 d(x)^{p/(p-1+\de)} \leq c_1 \mathrm{diam}(\Om)^{p/(p-1+\de)} \quad\mbox{in }\Om.
\end{align*} 
That is, $v_\rho\leq c_1 d(x)^{p/(p-1+\de)}$ and $\| v_\rho\|_{L^\infty(\Om)} \leq c_2$, where $c_2$ is independent of $\rho$. Therefore, from the proof of claim (i) of \cite[Theorem 1.8]{JDS}, we see that 
\begin{align*}
\int_{B_r} |\na v_\rho|^p \leq C_3 \; r^{N-p},
\end{align*}
here $C_3>0$ is a constant independent of $\rho$. Thus, completing the proof similarly as in the Theorem, we get that $v_\rho\in C^{0,\al}(\ov\Om)$ and the bound on the norm is independent of $\rho$. Therefore, by Arzela-Ascoli theorem, up to a subsequence, $v_\rho\ra v$ uniformly in $K$, for every compact subset $K$ of $\Om$. Moreover, from the equation \eqref{eq72}, it is clear that $v=0$ in $\Om$, because
the equation 
\begin{align*}
-\De_p v -\De_q v =0 \; \mbox{ in }\Om \quad v=0 \; \mbox{ on }\pa\Om,
\end{align*}  
has only trivial solution. Therefore, $v_\rho \ra 0$ uniformly in  $K$, as $\rho\ra 0$, for every compact subset $K$ of $\Om$.\\
Now, we will show that $v_\rho(x) \leq k_1 d(x)^{p/(p-1+\de)}$, for sufficiently small $\rho>0$. Set $\ul w = k_1 d(x)^\tau$ with $\tau={p/(p-1+\de)}$. Then, it is easy to observe that 
{\small \begin{align*}
-\De_p \ul w - \De_q \ul w = \frac{k_1^\de}{\ul w^\de}\Big[(k_1\tau)^{p-1}\big(-\De d \; d+(p-1)(1-\tau)\big)+ (k_1\tau)^{q-1} \big(-\De d \; d+(q-1)(1-\tau)\big) \Big] 
\end{align*}}
Furthermore, noticing the fact that the boundary $\pa\Om$ is of class $C^2$, therefore $d\in C^2( \Om_\mu)$ and $|\De d|\le D$ in $\Om_\mu$. We choose $0<\varrho\leq \frac{(q-1)(1-\tau)}{2 D}$, which implies that 
\begin{align}\label{eq73}
-\De_p \ul w - \De_q \ul w \geq \frac{k_1^\de}{\ul w^\de} \min\{ (k_1\tau)^{p-1}, (k_1\tau)^{q-1}\}\frac{(q-1)(1-\tau)}{2}:= \frac{c(k_1)}{\ul w^\de}\quad \mbox{in }\Om_\varrho. 
\end{align}   
Since  $v_\rho \ra 0$ uniformly in $K$, for every compact subset $K$ of $\Om$, there exists $\rho_1>0$ such that $\rho_1 \leq c(k_1)$ and for all $\rho\le \rho_1$,
\begin{align*}
v_\rho \leq k_1 d(x)^\tau \quad\mbox{ in }\Om\setminus\Om_\varrho.
\end{align*}
Therefore, taking into account \eqref{eq72} and \eqref{eq73}, for all $\rho\le \rho_1<c(k_1)$, we have 
\begin{align*}
-\De_p \ul w - \De_q \ul w - c(k_1) \ul w^{-\de} \geq -\Delta_{p}v_\rho -\Delta_{q}v_\rho- c(k_1) v_\rho^{-\de},
\end{align*} 
which, by weak comparison principle, implies $v_\rho \leq k_1 d(x)^\tau$ in $\Om_\varrho$. Hence, for $\rho>0$ small enough, we have $v_\rho$ satisfies \eqref{eq72} and $v_\rho(x) \le k_1 d(x)^\tau \leq u_0(x)$ in $\Om$.\par
\textbf{Proof of (b)}:\\  
 Proceeding similar to the case (a), we get the existence of solution  $u_M\in W^{1,p}_0(\Om)\cap\mc C_\de$ of \eqref{eq71}. 
 Next, we prove the convergence result for sequence $u_M$ in the interior of $\Om$ in three steps.\\
 \textit{Step (i)}: Making the substitution $v_M=M^{-1/(p-1+\de)}u_M$, we see that \eqref{eq71} transform into
	\begin{equation}\label{eq74}
	\left\{\begin{array}{rllll}
	-\De_p v_M -M^\frac{q-p}{p-1+\de}\De_q v_M& = v_M^{-\de}   \quad \mbox{in }\Om; \quad
	v_M&=0 \ \mbox{ on }\pa\Om.
	\end{array}
	\right. 
	\end{equation} 
	Since $u_M\in W^{1,p}_0(\Om)\cap\mc C_\de$ for all $M\ge 1$ (here coefficients of the function $d^\tau$ may depend on $M$), we see that $u_M^\ga\in W^{1,p}_0(\Om)$ for all $\ga\geq 1$. Therefore, for fixed $\ga>1$ (to be specified later), we set $w_M = v_M^\ga$ and then \eqref{eq74} implies 
	\begin{align}\label{eq75}
	-\De_p w_M- &M^\frac{q-p}{p-1+\de} \ga^{p-q} w_M^{(\ga-1)(p-q)/\ga}\De_q w_M + (\ga-1)(p-1) \frac{|\na w_M|^p}{w_M} 
	\nonumber\\
	&\quad+ M^\frac{q-p}{p-1+\de}\ga^{p-q}\frac{(\ga-1)(q-1)|\na w_M|^q}{w_M^{1+(\ga-1)(q-p)/\ga}} 
	= \ga^{p-1} w_M^{\frac{-\de+(\ga-1)(p-1)}{\ga}}.
	\end{align}
	We choose $\ga>1$ such that ${-\de+(\ga-1)(p-1)}>0$, this implies that $\frac{-\de+(\ga-1)(p-1)}{\ga}\in(0,p-1)$. Therefore, from the equation, it is clear that the sequence $\{w_M\}$ is bounded in $W^{1,p}_0(\Om)$ with respect to $M$. Noticing the fact that the right hand side of \eqref{eq75} is bounded from above by $2\ga^{p-1} (1+|w_M|^{p^*-1})$ and the terms involving gradient of $w_M$ on the left are nonnegative, a standard Moser iteration procedure, e.g., see Ladyzhenskaya and Ura\'ltseva \cite{ladyzh}, implies that there exists a constant $C_1>0$, independent of $M$, such that 
	\begin{align}\label{eq76}
	\| w_M \|_{L^\infty(\Om)} \leq C_1.
	\end{align}
	\textit{Step (ii)}: We will prove that there exists $\Ga>0$, independent of $M$, such that $v_M \leq \Ga d^{p/(p-1+\de)}$.\\
	For this, set $\ov w(x)= M^{1/(p-1+\de)}\Ga d(x)^{p/(p-1+\de)}$ and then a simple computation gives us
	\begin{align*}
	-\De_p \ov w - \De_q \ov w \geq M (\ov w)^{-\de} \quad\mbox{in }\Om_\varrho,
	\end{align*} 
	and taking into account \eqref{eq76}, we can choose $\Ga>0$ independent of $M>0$ such that 
	\[ u_M \leq M^{1/(p-1+\de)} C_1^{1/\ga}\leq M^{1/(p-1+\de)}\Ga d^{p/(p-1+\de)} \quad\mbox{in }\Om\setminus\Om_\varrho. \]  
	Then, the weak comparison principle gives us $u_M \le M^{1/(p-1+\de)}\Ga d^{p/(p-1+\de)}$, i.e., $v_M \le \Ga d^{p/(p-1+\de)}$. \\
	\textit{Step (iii)}: Arguing as in the case (a) and using \cite[Theorem 1.8]{JDS}, we get that $\{v_M\}$ is uniformly bounded in $C^{0,\al}(\ov\Om)$ w.r.t. $M$. Therefore, up to a subsequence, $v_M\ra v$ uniformly in $K$, as $M\ra\infty$, for every compact subset $K$ of $\Om$. Moreover, we see that $v$ is the solution of 
	\begin{align*}
	-\De_p v = v^{-\de}, \ v>0   \ \ \mbox{in }\Om; \quad
	v&=0 \ \mbox{ on }\pa\Om,
	\end{align*}
	since the above equation possesses unique solution. This implies that $u_M\ra \infty$ uniformly in $K$, as $M\ra\infty$, for every compact subset $K$ of $\Om$.\\
To complete the proof, we set $\ov w = k_2 d(x)^\tau$ with $\tau={p/(p-1+\de)}$. Then, it is easy to observe that 
{\small \begin{align*}
-\De_p \ov w - \De_q \ov w &= \frac{k_2^\de}{\ov w^\de}\Big[(k_2\tau)^{p-1}\big(-\De d \; d+(p-1)(1-\tau)\big)+ (k_2\tau)^{q-1} \big(-\De d \; d+(q-1)(1-\tau)\big) \Big]  \\
&\le C \frac{k_2^\de}{\ov w^\de} := \frac{c(k_2)}{\ov w^\de} \qquad \mbox{in }\Om_\varrho,
\end{align*} }
where we have used $|\De d|\le D$ in $\Om_\varrho$. 
Since  $u_M \ra \infty$ uniformly in $K$, as $M\ra\infty$, for every compact subset $K$ of $\Om$, there exists $M_1>0$ such that $M_1 \geq c(k_3)$ and for all $M\ge M_1$
\begin{align*}
u_M \geq k_2 d(x)^\tau \quad\mbox{ in }\Om\setminus\Om_\varrho.
\end{align*}
Now, proceeding as in case (a), we complete the proof for the case $\de>1$.\\
For the case $\de=1$, the proof differs only in proving counterparts of the inequalities \eqref{eq78} and \eqref{eq79}. For these, we take $\ul w=k_1 d\log(A/d)$ and $\ov w= k_2 d\log(A/d)$, where $A$ is large enough constant satisfying $A>>\mathrm{diam}(\Om)$. Then, proceeding as above and using the bounds as in \cite[Lemma 2.3]{JDS}, we complete the prove of \eqref{eq78} and \eqref{eq79}. \QED
\end{proof}
\begin{Corollary}
	For every $M\ge 1$ and $l,L\ge 0$, there exists $\tl u_M\in W^{1,p}_0(\Om)\cap\mc C_\de$, solution to the following equation:
	\begin{equation}\label{eq77}
	\left\{\begin{array}{rllll}
	-\Delta_{p}u -\Delta_{q}u & = M u^{-\de}+l u^{q-1}+L, \; u>0 \text{ in } \Om; \quad u&=0 \  \text{ on }  \pa\Om. 
	\end{array}
	\right.
	\end{equation}
	Moreover, for sufficiently large $M$, the following holds
	\begin{align}\label{eq80}
	 \tl u_M(x) \geq \begin{cases}
	    k_2 d(x)^\frac{p}{p-1+\de}  \ \mbox{ if }\de>1\\
	    k_2 d(x)\log(A/d(x)) \ \mbox{ if }\de=1
	\end{cases}\quad\mbox{in }\Om,
	\end{align}
	where the constant $k_2$ appears in \eqref{init} and $A$ is large enough.
\end{Corollary}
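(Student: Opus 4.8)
The plan is to obtain existence by repeating the variational construction of Proposition~\ref{sup}, with the minor modifications forced by $\de\ge 1$, and then to deduce the lower bound \eqref{eq80} from the pure singular solution $u_M$ of Proposition~\ref{prop3}(b) by a one-line comparison. Concretely, let $\ul v\in W^{1,p}_0(\Om)\cap\mc C_\de$ be the unique solution of $-\De_p v-\De_q v=v^{-\de}$ in $\Om$, $v=0$ on $\pa\Om$ (its existence and $\mc C_\de$-membership follow from \cite[Theorem 1.4]{JDS} with $\ba=0$, since $\de<2+1/(p-1)$). Freezing the singularity below $\ul v$ by
\[ \tl h(x,v):=v^{-\de}\ \text{ if }v\ge\ul v,\qquad \tl h(x,v):=\ul v^{-\de}\ \text{ otherwise}, \]
and setting $\tl H(x,t):=\int_0^t\tl h(x,s)\,ds$, I would study the functional
\[ E(u)=\frac1p\int_\Om|\na u|^p+\frac1q\int_\Om\big(|\na u|^q-l|u|^q\big)-M\int_\Om\tl H(x,u)-L\int_\Om u \]
on $W^{1,p}_0(\Om)$.

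The step I expect to be the crux is the finiteness and coercivity of this energy when $\de\ge 1$, where the term $\int_\Om\tl H(x,u)$ is no longer manifestly harmless. Here one checks that $\tl H(x,t)\le \tfrac{\de}{\de-1}\,\ul v(x)^{1-\de}$ uniformly in $t$ (for $\de=1$ the bound is the corresponding logarithmic one), and that the lower bound $\ul v\ge c_1 d^{p/(p-1+\de)}$ gives $\ul v^{1-\de}\le C\,d^{\,p(1-\de)/(p-1+\de)}$ with exponent strictly larger than $-1$ \emph{precisely} because $\de<2+1/(p-1)$; hence $\ul v^{1-\de}\in L^1(\Om)$ and $E$ is well defined. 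Since $q<p$ and $l,L\ge 0$, the leading $p$-growth dominates the subordinate $q$-power and linear terms and the $\tl H$-term is bounded below by a constant, so $E$ is coercive; weak lower semicontinuity is standard, the $\tl H$-term being weakly continuous by dominated convergence along the compact Sobolev embeddings. A minimizer $\tl u_M$ thus exists, is nonnegative, and, exactly as in Proposition~\ref{sup}, the weak comparison principle yields $\tl u_M\ge\ul v$, so the freezing is inactive and $\tl u_M$ solves \eqref{eq77}.

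For the regularity $\tl u_M\in W^{1,p}_0(\Om)\cap\mc C_\de$ I would argue as in Propositions~\ref{sup} and \ref{prop3}: once $\tl u_M\in L^\infty(\Om)$ (Moser iteration, cf. \cite[Lemma 3.2]{dpk}), the extra terms $l\tl u_M^{q-1}+L$ are bounded, so \eqref{eq77} is a bounded perturbation of the purely singular equation and the boundary barrier estimates together with \cite[Remark 2.8, Proposition 2.7, Theorem 1.8]{JDS} give $\tl u_M\in\mc C_\de$, while $\de<2+1/(p-1)$ secures $\tl u_M\in W^{1,p}_0(\Om)$.

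Finally, the lower bound \eqref{eq80} needs no new barrier computation. Because $l\tl u_M^{q-1}+L\ge 0$, the function $\tl u_M$ is a supersolution of \eqref{eq71}, of which $u_M$ is the exact solution; as $s\mapsto Ms^{-\de}$ is nonincreasing, the weak comparison principle gives $\tl u_M\ge u_M$ in $\Om$, and for $M$ large Proposition~\ref{prop3}(b) and \eqref{eq79} furnish $u_M\ge k_2 d^{p/(p-1+\de)}$ (resp. $k_2 d\log(A/d)$ when $\de=1$), whence \eqref{eq80}. The only genuinely delicate point throughout is the endpoint $\de=1$, where both the integrability of $\tl H$ and the boundary profile must be handled with the logarithmic weight $\varphi_1$ and the sharp two-sided estimates of \cite[Lemma 2.3]{JDS}.
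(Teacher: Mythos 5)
Your proposal is correct and takes essentially the same route as the paper: the decisive second half (observing that $l\,\tl u_M^{q-1}+L\ge 0$ makes $\tl u_M$ a supersolution of \eqref{eq71}, applying the weak comparison principle of Theorem \ref{cmp} to get $\tl u_M\ge u_M$, and then invoking \eqref{eq79} from Proposition \ref{prop3}(b)) is exactly the paper's argument. For existence the paper simply cites the standard minimization technique together with \cite[Lemma 2.2]{JDS} and the proof of \cite[Theorem 1.4]{JDS}; your truncated functional with the checks that $\tl H(x,t)\le\frac{\de}{\de-1}\,\ul v(x)^{1-\de}$ and that $\ul v^{1-\de}\in L^1(\Om)$ precisely when $\de<2+1/(p-1)$ is a sound expansion of the details the paper delegates to those citations.
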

\begin{proof}
  Since $1<q<p$ and $M>0$, using the standard minimization technique and employing \cite[Lemma 2.2]{JDS} together with proof of \cite[Theorem 1.4]{JDS}, we get the existence of unique solution  $\tl u_M\in W^{1,p}_0(\Om)\cap\mc C_\de$ of \eqref{eq77} (uniqueness follows from Theorem \ref{cmp}). Furthermore, it is easy to observe that $\tl u_M$ is a supersolution of \eqref{eq71}, hence by weak comparison principle (Theorem \ref{cmp}), we get $\tl u_M \ge u_M$ for all $M\ge 1$, where $u_M$ is the unique solution of \eqref{eq71}. Therefore, the required result follows from part (b) of the Theorem.\QED
 \end{proof}

 \textbf{Proof of Theorem \ref{thm1}}: We distinguish the following cases:\\
 \textit{Case (i)}: When $\de<1$.\\
  For $\la>0$, we define the energy functional associated to problem $(S_\la)$ as follows
  \begin{align*}
  	I_\la(u):= \frac{1}{2} \int_{\Om} u^2 dx+\frac{\la}{p} \int_{\Om} |\na u|^p dx+\frac{\la}{q} \int_{\Om} |\na u|^q dx-\frac{\la \vth}{1-\de} \int_{\Om} u_+^{1-\de} dx-\int_{\Om} hu_+ dx.
  \end{align*}
 We see that the first term in $I_\la$ is well defined for all $u$ in $W^{1,p}_0(\Om)$ for the case $p\ge 2N/(N+2)$ and in $W^{1,p}_0(\Om)\cap L^2(\Om)$ for $1<p<2N/(N+2)$. Hence, functional $I_\la$ is well defined in $X$, where $X= W^{1,p}_0(\Om)$ if $p\ge 2N/(N+2)$ and $X=W^{1,p}_0(\Om)\cap L^2(\Om)$, if $1<p<2N/(N+2)$. Furthermore, due to the fact $\de<1$, it is easy to prove that $I_\la$ is continuous, coercive in $X$ and strictly convex on the positive cone of $X$. Therefore, there exists a unique minimizer $u_\la\in X$ of $I_\la$. Moreover, since $I_\la(u_+)\le I_\la(u)$ for all $u\in X$, we may assume $u_\la\ge 0$ a.e. in $\Om$. Next, we will show that this $u_\la$ is in fact a weak solution of $(S_\la)$.\\
 We first construct a sub solution to problem $(S_\la)$ with the help of family of function $\tilde{u}_\rho$ obtained in Lemma \ref{lemsub}. For $\la>0$, there exists $\rho_\la>0$, sufficiently small such that 
 \begin{align}\label{eq20}
 	\tilde{u}_{\rho_\la} -\la \big(\De_p \tilde{u}_{\rho_\la}+\De_q \tilde{u}_{\rho_\la} + \vth \tilde{u}_{\rho_\la}^{-\de} \big) < -\|h\|_{L^\infty(\Om)}\leq h \quad \mbox{in } \Om.
 \end{align}
 where $\tilde{u}_{\rho_\la}\in C^1_0(\ov\Om)$ is the unique solution of \eqref{eqS}. Set $v_\la:= (\tilde{u}_{\rho_\la}-u_\la)_+$ and $\Theta(t):= I_\la(u_\la+tv_\la)$ for $t\ge 0$. 
 Then applying the arguments of \cite[P. 5049]{badra} and using \eqref{eq20}, we have $c_{\rho_\la} d\leq \tilde{u}_{\rho_\la}\le u_\la$ which on account of \cite[Lemma A2]{giacomoni} implies that $I_\la$ is G\^ateaux differentiable. Consequently, ${u}_{\la}$ is a weak solution of $(S_\la)$. Next, choosing $C>0$ large enough so that $\|h\|_{L^\infty(\Om)}< C-\la C^{-\de}$ and using weak comparison principle, we get 
 $ u_\la\le C, $
 that is, $u_\la\in L^\infty(\Om)$. To complete the proof in this case, we need to prove $u_\la\in\mc C_\de$. For this purpose, we consider $u_M\in W^{1,p}_0(\Om)\cap\mc C_\de$, the unique solution to $(\mc M)$ given by proposition \ref{sup} for $\la L > \|h\|_{L^\infty(\Om)}$ and $M>\max\{1, \la \vth\}$. Then, it is easy to verify that $u_M$ is a super solution to problem $(S_\la)$. Therefore, by comparison principle, we get $u_\la \le u_M$ in $\Om$, consequently $u_\la\in\mc C_\de$. This completes the proof of Theorem for the case $\de<1$. \\
 \textit{Case (ii)}: When $1\le\de<2+ 1/(p-1)$. \\
 In this case, with the help of Proposition \ref{prop3}, it is easy to observe that $\ov u:=\tl u_M\in W^{1,p}_0(\Om)\cap\mc C_\de$ satisfying \eqref{eq77} with $M>\max\{1, \la \vth\}$, $L=\|h\|_{L^\infty(\Om)}$ and $l=0$, is a super solution of $(S_\la)$. To construct a subsolution, we take $\rho<\vth/2$ in \eqref{eq72} and $\ul u:=v_\rho$, for $\rho>0$ sufficiently small such that $ \| v_\rho\|_{L^\infty(\Om\setminus\Om_\varrho)}^\de\leq \frac{\vth}{2\|h\|_{L^\infty(\Om)}}$ for $\varrho>0$ small enough satisfying $ \varrho^\de \leq (2k_2^\de \vth^{-1}\|h\|_{L^\infty(\Om)})^{-1}$. The above choice of $\rho$ and $\varrho$ ensures that $(\vth-\rho) v_\rho^{-\de}\leq -\|h\|_{L^\infty(\Om)}$, that is, $v_\rho$ is a subsolution of $(S_\la)$. Then, noticing $\de<2+1/(p-1)$, a truncation argument similar to proposition \ref{sup} and weak comparison principle of \cite[Theorem 1.5]{JDS} proves the existence of a solution $u\in W^{1,p}_0(\Om)\cap\mc C_{\de}$ of $(S_\la)$ for all $\vth>0$ and  $\de\in(1,2+1/(p-1))$.
The strict monotonocity of the operator in $(S_\la)$ implies the uniqueness of $u$.\QED
 \textbf{Proof of Theorem \ref{thm3}}: The standard minimization technique (see \cite[Lemma 2.1]{JDS}) shows that there exists a unique solution $u_\e\in C^{1,\sigma}(\ov\Om)$, for some $\sigma\in(0,1)$, to the following auxiliary problem: 
 \begin{equation*}
  (S_\e)\; \left\{\begin{array}{rllll}
  u_\e-\la\big(\Delta_{p}u_\e +\Delta_{q}u_\e+ \vth (u_\e+\e)^{-\de}\big) & = h, \; u_\e>0 \text{ in } \Om; \quad u_\e&=0 \ \text{ on }  \pa\Om.
 \end{array}
 \right.
 \end{equation*}
 Moreover, following \cite[Lemma 2.2]{JDS}, with
\begin{equation*}
\begin{aligned}
 \ul u_\e(x)=\eta \big(\big( d(x)+ \e^\frac{p-1+\de}{p} \big)^{\frac{p}{p-1+\de}} -\e\big) \; \; \mbox{ and }
 \ov u_\e(x)=\Gamma \big(\big( d(x)+ \e^\frac{p-1+\de}{p} \big)^{\frac{p}{p-1+\de}}-\e\big)
\end{aligned}
\end{equation*}
 for $\eta>0$ small and $\Ga$ large enough, we can prove $\ul u_\e\le u_\e\le \ov u_\e$. Next, we take $u_\e^\ga$ as a test function in the weak formulation of problem $(S_\e)$ for some $\ga>0$,
\begin{align}\label{eq59}
 \int_{\Om} u_{\e}^{1+\ga} +\int_{\Om}|\na u_\e|^{p-2} \na u_\e \na u_\e^\ga + \int_{\Om} |\na u_\e|^{q-2} \na u_\e\na u_\e^\ga =\vth \int_{\Om}  u_\e^\ga\big( u_\e +\e\big)^{-\de}+\int_{\Om} h u_\e^\ga.
\end{align}
We then observe that 
\begin{align*}
\int_{\Om} |\na u_\e|^{p-2} \na u_\e \na u_\e^\ga = \ga\Big(\frac{p}{p+\ga-1} \Big)^p \int_{\Om} |\na u_\e^{(p+\ga-1)/p}|^p
\end{align*}
and similar result holds for the third term on the left of \eqref{eq59}. Therefore, using the relation $u_\e\le \ov u_\e$, for $\ga\ge\de$, 
 from \eqref{eq59}, we infer that
\begin{align*}
\ga\Big(\frac{p}{p+\ga-1} \Big)^p \int_{\Om} |\na u_\e^{(p+\ga-1)/p}|^p \le C \int_{\Om} \ov u_\e^{(\ga-\de)} dx+\|h\|_{L^\infty(\Om)} \int_{\Om} \ov u_\e^{\ga} dx.
\end{align*}
Furthermore, noticing $\ov u_\e\le\Ga d^\frac{p}{p-1+\de}$, we get that the right side quantity is finite if and only if $\ga>\de- \frac{p-1+\de}{p}$. Thus, ${u_\e}^\rho\in W^{1,p}_0(\Om)$ is uniformly bounded for all $\rho>  \frac{(p-1)(p-1+\de)}{p^2}$. Therefore, on account of embedding results of $W^{1,p}_0(\Om)$, we can extract a subsequence, still denoting by $u_\e$, such that $u_\e(x)\ra u(x)$ a.e. in $\Om$, for some $u\in W^{1,p}_{loc}(\Om)$. By the local H\"older regularity result of Lieberman \cite[Theorem 1.7]{liebm91}, we infer that the sequence $u_\e$ converges to $u$ in $C^1_{loc}(\Om)$. Therefore, $u$ satisfies equation $(P)$ in the sense of distribution. Moreover, from the relation $\ul u_\e\le u_\e\le \ov u_\e$ and passing to limit $\e\ra 0$, we deduce that 
\begin{align}\label{eq60}
 \eta d(x)^{\frac{p}{p-1+\de}}  \le u(x) \le \Ga  d(x)^{\frac{p}{p-1+\de}}.
\end{align}
Repeating the proof of boundedness of the sequence $\{u_\e^\rho\}$ and using above comparison estimate, we see that $u^{(p+\de-1)/p}\in W^{1,p}_0(\Om)$ and  taking $\lim_{x\ra x_0\in\pa\Om}u(x)$, we get $u\in C_0(\ov\Om)$, thus $u\in\mc C_\de$. Next, we will show that $u\not\in W^{1,p}_0(\Om)$. On the contrary, we assume $u\in W^{1,p}_0(\Om)$, then from the weak formulation, it is clear that $\int_{\Om} u^{1-\de}<\infty$. Using \eqref{eq60}, we obtain a contradiction. This completes proof of the Theorem.\QED

Next, we prove the following weak comparison principle for singular problems.
\begin{Theorem}\label{cmp}
  Let $1<q<p<\infty$ and $g:\Om\times \mb R^+\to\mb R$ be a Carath\'eodory function bounded from below such that it satisfies the condition (f2) with $g$ in place of $f$. Let $u,v\in L^\infty(\Om)\cap W^{1,p}_0(\Om)$ be such that $u,v>0$ in $\Om$, $\int_{\Om} u^{1-\de}dx<\infty$, $\int_{\Om} v^{1-\de}dx<\infty$ and the following holds, in weak sense, in $\Om$:
	\begin{align*}
	-\De_p u -\De_q u \le u^{-\de}+g(x,u) \quad \mbox{and } -\De_p v -\De_q v \ge v^{-\de}+g(x,v),
	\end{align*}
 Furthermore, suppose that there exists a  positive function $w\in L^\infty(\Om)$ and constants $c_1, c_2>0$ such that $c_1 w\le u, v \le c_2 w$ with 
	\begin{align}\label{eq51}
	\int_{\Om} |g(x,c_1 w)| w dx <\infty \quad \mbox{and }\int_{\Om} |g(x,c_2 w)| w dx <\infty.
	\end{align}
  Then, the comparison principle holds, that is, $u\le v$ in $\Om$.
\end{Theorem}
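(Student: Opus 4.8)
The plan is to use the Díaz–Sáa type inequality, which is the natural tool for comparing solutions when the reaction term has the monotonicity structure encoded in (f2). The idea is to test the two differential inequalities against a cleverly chosen pair of functions built from $u$ and $v$ so that the leading $(p,q)$-operator produces a nonnegative quantity, while the reaction side produces a quantity that is forced to be nonpositive by the monotonicity of $s\mapsto (s^{-\de}+g(x,s))/s^{q-1}$ (equivalently, of $s^{1-\de-q}+ g(x,s)s^{1-q}$ together with the singular term). First I would set up the test functions. A standard choice for this kind of argument is
\begin{align*}
\phi_1=\frac{u^{q}-v^{q}}{u^{q-1}}\,,\qquad \phi_2=\frac{u^{q}-v^{q}}{v^{q-1}}\,,
\end{align*}
or, more robustly, their truncated/regularised versions $\phi_1=\dfrac{(u^{q}-v^{q})_+}{(u+\e)^{q-1}}$ and similarly for $\phi_2$, with $\e\to 0$ at the end. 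These belong to $W^{1,p}_0(\Om)$ because of the two-sided bound $c_1 w\le u,v\le c_2 w$, which keeps the quotients bounded and controls their gradients.

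The key steps, in order, are as follows. I would subtract the weak form of the inequality for $v$ tested against $\phi_2$ from that for $u$ tested against $\phi_1$, on the set $\{u>v\}$. On the left-hand side, the hidden convexity/Díaz–Sáa inequality for the $p$-Laplacian and for the $q$-Laplacian separately gives
\begin{align*}
\int_{\{u>v\}}\Big(|\na u|^{p-2}\na u\cdot\na\phi_1-|\na v|^{p-2}\na v\cdot\na\phi_2\Big)\,dx\ \ge\ 0,
\end{align*}
and the analogous nonnegativity for the $q$-terms; these are the classical Díaz–Sáa inequalities applied with exponent $p$ and $q$ to the concave change of variables $u^q$. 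On the right-hand side the singular and reaction contributions combine into
\begin{align*}
\int_{\{u>v\}}\Big(\big(u^{-\de}+g(x,u)\big)u^{1-q}-\big(v^{-\de}+g(x,v)\big)v^{1-q}\Big)(u^{q}-v^{q})\,dx,
\end{align*}
and on $\{u>v\}$ the factor $(u^q-v^q)\ge 0$ while, by (f2) applied to $g$ and by monotonicity of $s\mapsto s^{-\de-q+1}$, the bracket is $\le 0$; hence the whole right side is $\le 0$. Chaining the inequalities forces both sides to vanish, whence $\na(u^q-v^q)_+\equiv 0$ on $\{u>v\}$, so $(u^q-v^q)_+$ is constant and, vanishing on $\pa\Om$, is identically zero. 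Therefore $u\le v$ in $\Om$.

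The main obstacle I anticipate is the justification that the test functions $\phi_1,\phi_2$ lie in $W^{1,p}_0(\Om)$ and, more delicately, that the singular integrals are finite and pass to the limit. This is exactly where the hypotheses $\int_\Om u^{1-\de}\,dx<\infty$, $\int_\Om v^{1-\de}\,dx<\infty$, the two-sided comparability $c_1 w\le u,v\le c_2 w$, and the integrability condition \eqref{eq51} enter: they guarantee that $u^{-\de}\phi_1$, $v^{-\de}\phi_2$, $g(x,u)\phi_1$ and $g(x,v)\phi_2$ are all integrable, so that testing is legitimate and no boundary concentration of the singular term is lost. To handle the singularity rigorously I would introduce the $\e$-regularised quotients above, verify the inequality for each $\e>0$ where everything is manifestly integrable, and then let $\e\to 0$ using dominated convergence, with the domination supplied by \eqref{eq51} and the bound $c_1w\le u,v$. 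The Díaz–Sáa nonnegativity itself is standard once the test functions are admissible, so the heart of the argument is this integrability bookkeeping near $\pa\Om$.
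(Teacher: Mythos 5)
Your plan coincides with the paper's proof in all of its machinery: the same quotient test functions (the paper uses $\phi=(u_\e^q-v_\e^q)/u_\e^{q-1}$ and $\psi=(v_\e^q-u_\e^q)/v_\e^{q-1}$ with $u_\e=u+\e$, $v_\e=v+\e$), the same Díaz--Sáa inequality \eqref{diaz} applied with exponent $r=q\le p$ to both the $p$- and the $q$-parts of the operator, and the same integrability bookkeeping via \eqref{eq51} and $c_1w\le u,v\le c_2w$ to justify the limit $\e\to0$ by dominated convergence. Up to the chain $0\le \mathrm{LHS}\le \mathrm{RHS}\le 0$ on $\Om_+:=\{u>v\}$, your argument matches the paper's.

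The gap is in your concluding step. Equality in the Díaz--Sáa inequality does \emph{not} yield $\na(u^q-v^q)_+\equiv 0$; its equality case is \emph{proportionality}: $u=kv$ on $\Om_+$ for some constant $k>0$, as stated after \eqref{diaz}. Indeed, if $u=kv$ with $k>1$ and $v$ nonconstant, both Díaz--Sáa quantities vanish while $\na(u^q-v^q)=(k^q-1)\na v^q\not\equiv 0$, so your deduction ``both sides vanish, hence $(u^q-v^q)_+$ is constant, hence zero'' fails in precisely the scenario it must exclude. The paper therefore needs a further, essential step: having obtained $u=kv$ on $\Om_+$ (via Fatou's lemma and the $q$-Laplacian equality), it tests the two differential inequalities once more and uses $k>1$ together with the monotonicity of $s\mapsto g(x,s)/s^{q-1}$ and the strict decrease of $s\mapsto s^{1-\de-q}$ to get $k^q\le k^{1-\de}$, i.e.\ $k^{q+\de-1}\le 1$, contradicting $k>1$ since $\de>0$ and $q>1$; this is what forces $|\Om_+|=0$. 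An alternative repair closer to your wording would be to show the right-hand side is \emph{strictly} negative whenever $|\Om_+|>0$, using that the bracket contains the strictly decreasing term $u^{1-q-\de}-v^{1-q-\de}$; but that requires passing the singular terms to the limit with genuine equality (dominations such as $u^{-\de}\phi\le q\,u^{1-\de}\in L^1(\Om)$), which you assert but do not carry out, and which the paper's Fatou argument deliberately avoids by retaining only the nonpositivity \eqref{eq56}. As written, your final deduction is false and the proof is incomplete without one of these two completions.
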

\begin{proof}
 For $\e>0$, set $u_\e =u+\e$ and $v_\e= v+\e$. Let 
	\begin{align*}
	\phi:= \frac{u_\e^q-v_\e^q}{u_\e^{q-1}} \quad \mbox{ and } \psi:= \frac{v_\e^q-u_\e^q}{v_\e^{q-1}}. 
	\end{align*}
 We recall the following Diaz-Sa\'a inequality \cite[Theorem 2.5]{takgiac} (see also \cite[Remark 2.10]{brasco})
	\begin{align}\label{diaz}
	\int_{\Om} \left( \frac{-\De_p w_1}{w_1^{r-1}}+ \frac{\De_p w_2}{w_2^{r-1}}\right) (w_1^r-w_2^r) dx \ge 0,
	\end{align}
 for all $1<r\le p$, and the equality holds if and only if $w_1=kw_2$, for some constant $k>0$. Set $\Om_+= \{ x\in\Om : u(x)>v(x) \}$, then we have $\phi\ge 0$ and $\psi \le 0$ in $\Om_+$. Now testing the first equation by $\phi$ and the second by $\psi$, we obtain
	\begin{align*}
	\int_{\Om_+} &(-\De_p u-\De_q u) \phi + \int_{\Om_+} (-\De_p v-\De_q v)  \psi \\
	&\le \int_{\Om_+} \big(u^{-\de}+g(x,u) \big) \phi  + \int_{\Om_+} \big(v^{-\de}+g(x,v) \big) \psi.
	\end{align*}
 Rearranging the terms and using Diaz-Sa\'a inequality \eqref{diaz}, we get
	\begin{equation}\label{eq55}
	\begin{aligned}
	0\le & \int_{\Om_+} \Big(\frac{-\De_p u_\e}{u_\e^{q-1}} + \frac{\De_p v_\e}{v_\e^{q-1}} \Big) (u_\e^q-v_\e^q) + \int_{\Om_+} \Big(\frac{-\De_q u_\e}{u_\e^{q-1}} + \frac{\De_q v_\e}{v_\e^{q-1}} \Big) (u_\e^q-v_\e^q) \\
	&\le \int_{\Om_+} \big(u^{-\de}+g(x,u) \big) \phi  + \int_{\Om_+} \big(v^{-\de}+g(x,v) \big) \psi.
	\end{aligned}
	\end{equation}
 The right side quantity of above equation simplifies to 
	\begin{align*}
	\int_{\Om_+} (u^{-\de}+g(x,u) ) \phi  + \int_{\Om_+} (v^{-\de}+g(x,v)) \psi \le \int_{\Om_+} \left[ \frac{g(x,u)}{u^{q-1}} \frac{u^{q-1}}{u_\e^{q-1}} - \frac{g(x,v)}{v^{q-1}} \frac{v^{q-1}}{v_\e^{q-1}}\right] (u_\e^q -v_\e^q)dx.
	\end{align*}
 Since $\frac{u}{u_\e} \ra 1$ and $\frac{v}{v_\e} \ra 1$ as $\e\ra 0^+$ a.e. in $\Om$, using \eqref{eq51} and dominated convergence theorem, we obtain
	\begin{align}\label{eq56}
	\lim_{\e\ra 0^+} \int_{\Om_+} \big( g(x,u)\phi +g(x,v)\psi \big) \le 0.
	\end{align}
  Taking into account \eqref{diaz}, \eqref{eq55}, \eqref{eq56} and using Fatou lemma, we deduce that 
	\begin{align*}
	\int_{\Om_+} \Big(\frac{-\De_q u}{u^{q-1}} + \frac{\De_q v}{v^{q-1}} \Big) (u^q-v^q)=0,
	\end{align*}
  therefore,
  there exists $k\in\mb R$ such that $u=k v$ in $\Om_+$. Now, we will prove that $k\le 1$. On the contrary assume $k>1$, then the following hold
    {\small\begin{equation*}
		\begin{aligned}
		&k^q \int_{\Om_+} (\na v|^p+|\na v|^q) \le \int_{\Om_+} (|\na u|^p+|\na u|^q) \le \int_{\Om_+} (u^{1-\de}+g(x,u)u )= \int_{\Om_+} k^{1-\de} v^{1-\de} +g(x,kv)kv \\
		&k^q \int_{\Om_+}( |\na v|^p+|\na v|^q )\ge k^q \int_{\Om_+} (v^{1-\de}+g(x,v)v )\ge \int_{\Om_+} (k^q v^{1-\de}+g(x,kv)kv)
		\end{aligned}
		\end{equation*}}
  where in the last inequality we have used the fact that $k>1$, $v>0$ and $\frac{g(x,s)}{s^{q-1}}$ is decreasing in $s$. Therefore, we get $k^{q+\de-1}\le 1$, which yields a contradiction, since $\de>0, q>1$. This implies that $u\le v$ in $\Om_+$ and from the definition of $\Om_+$, we get $u\le v$ in $\Om$. \QED
\end{proof}

\textbf{Proof of Theorem \ref{thm4}}: We first consider the case $\de<1$. For $0<\al_f <m$, let $L>0$ be such that $-L \le f(x,s) \le m s^{q-1}+L$. Fix $\e>0$ small enough so that the unique positive solution $\tl u_\e$ of \eqref{eqS} satisfies the following inequality in weak sense,
 \begin{align*}
    -\De_p \tl u_\e -\De_q \tl u_\e -\vth \tl u_\e^{-\de} \le -L \quad \mbox{in }\Om.
  \end{align*}
 Furthermore, we take $l>m$ and $M>\max\{1,\vth\}$ in proposition \ref{sup} and denote $v\in W^{1,p}_0(\Om)\cap\mc C_\de$ as the unique positive solution of problem $(\mc M)$ in \eqref{eq52}. Then, it follows that 
  \begin{align*}
    -\De_p v -\De_q v - \vth \; v^{-\de} \ge m v^{q-1}+ L \quad \mbox{in }\Om.
  \end{align*}
We set the following 
  \begin{align*}
    h(x,u(x))=\begin{cases}
       u(x)^{-\de} \ \mbox{if }u(x)\ge \tl u_\e(x) \\
       \tl u_\e^{-\de} \quad \mbox{otherwise}
      \end{cases}
    , \; \; g(x,u(x))= \begin{cases}
      f(x,\tl u_\e(x)) \ \mbox{if }\tl u_\e(x)\le u(x) \\
      f(x, u(x)) \ \mbox{if }\tl u_\e(x)\le u(x) \le v(x) \\
      f(x,v(x)) \ \mbox{if } u(x)\le v(x).  
     \end{cases}
  \end{align*}
 Let $H(x,t):= \int_{0}^{t} h(x,s) ds$ and $G(x,t):= \int_{0}^{t} g(x,s) ds$. We define the energy functional $\mc J :W^{1,p}_0(\Om) \ra \mb R$ as follows
  \begin{align*}
    \mc J(u)= \frac{1}{p} \int_{\Om} |\na u|^p dx+\frac{1}{q} \int_{\Om} |\na u|^q dx- \vth\int_{\Om} H(x,u) dx-\int_{\Om} G(x,u) dx.
 \end{align*}
 It is easy to verify that $\mc J$ is coercive and weakly lower semicontinuous in $W^{1,p}_0(\Om)$. Similar to the proof of proposition \ref{sup}, we obtain a global minimizer $u$ of $\mc J$ in $W^{1,p}_0(\Om)$. Again by using \cite[Lemma A2]{giacomoni}, we get that $\mc J$ is G\^ateaux differentiable and thus $u$ satisfies 
 \begin{align*}
    \left\{-\De_p u -\De_q u = \vth \; h(x,u) +g(x,u) \ \ \mbox{ in }\Om; \quad
      u=0 \ \mbox{ on }\pa\Om. \right.
 \end{align*}
 Then, by repeated application of weak comparison principle, first we can conclude that $\tl u_\e \le u$, thus $h(x,u)= u^{-\de}$ and then $u\le v$. Therefore, $u$ satisfies $(P)$ in the weak sense. From the above procedure it is clear that $u\in\mc C_\de$. \par
 Next, we consider the case $\de\ge 1$. Since $f$ is uniformly local Lipschitz function, there exists $K>0$ such that the map $t\mapsto f(x,t)+ Kt$ is nondecreasing in $[0,\|\ov u\|_{L^\infty(\Om)}]$, where $\ov u$ is specified below. Consider the following iterative scheme
 \begin{align}\label{eq63}
    -\De_p u_n -\De_q u_n - \vth \; u_n^{-\de} + Ku_n &= f(x,u_{n-1})+ K u_{n-1} \quad\mbox{in }\Om,\nonumber\\
     u_n &=0 \quad\mbox{on }\pa\Om,
  \end{align}
 with $u_0:=\ul u$. Now, we specify the choices for $\ul u$ and $\ov u$. Since $f(x,s)\ge -L$, we repeat the process of constructing subsolution of Theorem \ref{thm1} for the case $\de\ge 1$ with $L=\|h\|_{L^\infty(\Om)}$. Therefore, for sufficiently small $\rho>0$, we set $\ul u=v_\rho$. For the supersolution, we take $M>\max\{1,\vth\}$ and $l=m$ in \eqref{eq77} and set $\ov u= \tl u_M$. By the choice of $K$ and Theorem \ref{thm1}, we see that the scheme is well defined and produces a sequence $\{u_n\}\subset W^{1,p}_0(\Om)\cap\mc C_\de\cap C_0(\ov\Om)$ as solution to problem \eqref{eq63}. Using weak comparison principle (Theorem \ref{cmp}), we obtain $\ul u \le u_n \le \ov u$ and again using this together with the monotonocity of the map $t\mapsto f(x,t)+Kt$, we get that the sequence $\{u_n\}$ is monotone increasing. Moreover, as a consequence of \cite[Theorem 1.8]{JDS} $u_n\in C^{0,\sigma}(\ov\Om)$ and from the relation $\ul u \le u_n \le \ov u$, we get that the sequence $\{u_n\}$ is uniformly bounded in $\mc C_\de\cap C_0(\ov\Om)$ with respect to $n$. Therefore, by Arzela-Ascoli theorem, we get that $u_n\ra u$ in $\mc C_\de\cap C_0(\ov\Om)$, as $n\ra\infty$. This together with the weak formulation of \eqref{eq63} implies that the sequence $\{u_n\}$ is Cauchy in $W^{1,p}_0(\Om)$ and therefore converges to $u$ in $W^{1,p}_0(\Om)$. Then, passing to the limit as $n\ra\infty$ in the equation \eqref{eq63} and using Lebesgue dominated convergence theorem, we get that $u$ is a solution of problem $(S_\la)$.
Furthermore, the uniqueness of the solution follows by Theorem \ref{cmp}.   \QED
Now, we prove some higher Sobolev integrability result for the following equation:
 \begin{align}\label{eq81}
 	\mathrm{div}A(x,\na u) = \mathrm{div}f \quad\mbox{in }\Om,
 \end{align}
where $f\in L^{p/(p-1)}(\Om;\mb R^N)$, $\Om$ is a bounded domain and $A: \ov\Om\times\mb R^N \to \mb R^N$ satisfying the following:
 \begin{enumerate}
	\item[(A1)] $|A(x,z)|+|\pa_z A(x,z) z| \le \La (1+|z|^{p-1})$,
	\item[(A2)] $z.A(x,z)  \ge \nu |z|^p $,
	\item[(A3)] $\ds\sum_{i=1}^{N} |A^i(x,z)- A^i(y,z)| \le \La (1+|z|^{p-1}) |x-y|^\omega$,
 \end{enumerate}
where $0<\nu\le\La$ are constants. We have the following notation
 \[ D^m(\Om):=\{ g\in L^m(\Om;\mb R^N) : \ \exists \; u\in L^1_\mathrm{loc}(\Om) \mbox{ with } g=\na u \}. \]
 We state our theorem in this regard as follows.
\begin{Theorem}
 Let $f\in L^{p/(p-1)}(\Om;\mb R^N)$ and $u$ be a weak solution of \eqref{eq81} such that $\na u\in D^p(\Om)$. Suppose $f\in L^{m/(p-1)}(\Om;\mb R^N)$ for some $m\ge p$, then $\na u$ belongs to $L^m(\Om;\mb R^N)$ and 
  \begin{align}\label{eq82}
  	\| \na u \|_{L^m}^{p-1} \leq C_m \big(\| f \|_{L^{m/(p-1)}}+1\big),
  \end{align}
  where $C_m>0$ is a constant which depends only on $N,m,p,\Om$ and $\La,\nu,\omega$.
 \end{Theorem}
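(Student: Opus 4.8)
The plan is to read \eqref{eq81} as a nonlinear Calder\'on--Zygmund problem and to prove \eqref{eq82} by the comparison-and-perturbation scheme of DiBenedetto--Manfredi \cite{dibMan} combined with the level-set (good-$\la$) technique of Caffarelli--Peral \cite{caff}. It is convenient to set $g:=|f|^{1/(p-1)}$, so that $f\in L^{m/(p-1)}$ is the same as $g\in L^m$ with $\|g\|_{L^m}^{p-1}=\|f\|_{L^{m/(p-1)}}$; hence \eqref{eq82} is equivalent to $\|\na u\|_{L^m}\le C_m(\|g\|_{L^m}+1)$. The base case $m=p$ is nothing but the Caccioppoli energy estimate: testing \eqref{eq81} with (a cutoff of) $u$ and using the coercivity (A2) and growth (A1) gives $\|\na u\|_{L^p}\le C(\|g\|_{L^p}+1)$, which by H\"older and boundedness of $\Om$ is controlled by $\|g\|_{L^m}+1$ for every $m\ge p$. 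The whole issue is to upgrade this to arbitrary $m\ge p$, which forces the full Calder\'on--Zygmund machinery rather than a mere Gehring-type self-improvement.

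First I would fix a ball $B_{2r}\Subset\Om$ and introduce the comparison function $v\in u+W^{1,p}_0(B_{2r})$ solving the homogeneous equation $\mathrm{div}\,A(x,\na v)=0$ in $B_{2r}$ with $v=u$ on $\pa B_{2r}$. Subtracting the two weak formulations, testing with $u-v$, and invoking the monotonicity of $A$ that accompanies the $p$-structure conditions (A1)--(A2), together with Young's inequality applied to $\int |f|\,|\na(u-v)|=\int g^{p-1}|\na(u-v)|$, yields the averaged comparison estimate
\[ \dashint_{B_{2r}} |\na u-\na v|^p\,dx \le C\,\dashint_{B_{2r}} g^p\,dx \]
for $p\ge 2$; in the singular range $1<p<2$ one first uses the degenerate monotonicity inequality (with the weight $(|\na u|+|\na v|)^{2-p}$) and then re-absorbs by H\"older and Young to reach the same averaged bound. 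Simultaneously, the interior gradient regularity for the homogeneous equation --- which is exactly where the Hölder-in-$x$ hypothesis (A3) and the DiBenedetto--Manfredi $C^{1,\al}$ theory enter --- provides the scale-invariant sup-bound
\[ \sup_{B_{r}} |\na v|^p \le C\,\dashint_{B_{2r}} |\na v|^p\,dx, \]
so the comparison solution has locally bounded gradient with a constant independent of the ball.

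Combining these two ingredients and passing to (truncated) Hardy--Littlewood maximal functions of $|\na u|^p$ and $g^p$, a Vitali covering plus rescaling produces a good-$\la$ inequality of Caffarelli--Peral type,
\[ |\{M(|\na u|^p)>KN,\ M(g^p)\le \e N\}| \le C\,\e^{\kappa}\,|\{M(|\na u|^p)>N\}|, \]
valid for all levels $N$. Multiplying by $N^{m/p-1}$ and integrating over $N\in(0,\infty)$, then using the strong $(s,s)$ boundedness of $M$ with $s=m/p>1$, converts this into $\|\na u\|_{L^m}^p\le C_m(\|g\|_{L^m}^p+\|\na u\|_{L^p}^p)$; inserting the base energy estimate and unwinding $g=|f|^{1/(p-1)}$ gives exactly \eqref{eq82}, with $C_m$ entering through the $L^{m/p}$ operator norm of $M$. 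Global (rather than merely interior) control is obtained by running the same comparison argument on boundary balls after flattening the $C^2$ boundary.

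The main obstacle is twofold. First, one must make the comparison estimate uniform across the entire range $1<p<\infty$: the sub-quadratic regime requires the weighted monotonicity inequality and careful Young's-inequality bookkeeping so that no surplus power of $|\na u|$ survives on the right. Second, and more essentially, one must import the homogeneous gradient sup-bound under the mere Hölder-in-$x$ assumption (A3), with constants demonstrably independent of the radius $r$ so that the good-$\la$ machine can be iterated across all scales. Once these two scale-invariant facts are secured, the passage to arbitrary $m\ge p$ is the routine Caffarelli--Peral integration.
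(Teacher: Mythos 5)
Your proposal is correct and takes essentially the same route as the paper: both implement the DiBenedetto--Manfredi/Caffarelli--Peral nonlinear Calder\'on--Zygmund scheme --- comparison with a homogeneous problem on balls, a scale-invariant gradient sup-bound for the comparison solution (where (A3) enters), and a maximal-function level-set argument --- and the paper proves \eqref{eq82} precisely by deriving the averaged comparison and oscillation estimates and then invoking the maximal-operator argument of \cite{dibMan} (see also \cite{caff}). The only cosmetic difference is that the paper freezes the coefficients, solving $\mathrm{div}\,A(0,\na w)=0$ and absorbing the $x$-dependence via (A3) into an additional $C(\rho/R)^{\varsigma p}$ error term in the oscillation estimate, whereas you keep the variable-coefficient comparison problem and phrase the final step as a Caffarelli--Peral good-$\la$ inequality.
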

\begin{proof}
 Proof of the theorem follows by following the steps of \cite[Theorem 1.1]{dibMan}(see also \cite{caff}). Indeed, let $w\in W^{1,p}(B_R)$ be the unique solution to the problem:
  \begin{align*}
  	\mathrm{div} A(0,\na w)=0 \ \ \mbox{ in }B_R; \quad w=u \ \ \mbox{ on }\pa B_R,
  \end{align*}
  then, following the procedure of \cite{dibMan}, one can prove that: \\
  For $\eta\in(0,1)$, there exists a constant $C_{\eta}>0$ such that 
  \begin{align*} 
  \Xint-_{B_\rho}|\na(u-w)|^p \le \eta \Big(\frac{R}{\rho}\Big)^N \Xint-_{B_R}|\na w|^p + C_\eta \Big(\frac{R}{\rho}\Big)^N \Xint-_{B_R}|f|^{p'} \quad\mbox{for }0<\rho\le R.
  \end{align*} 
 Using the above inequality and estimates of \cite[Lemma 4.1]{JDS}, we have\\
 For $\eta\in(0,1)$, there exist $C_{\eta}>0$ and $h,\varsigma\in(0,1)$ such that, for $0<\rho\le hR$,
 \begin{align*} 
 \Xint-_{B_\rho}|\na u-(|\na u|^p)_{\rho}|^p \le C_\eta \Big(\frac{R}{\rho}\Big)^N \Xint-_{B_R}|f|^{p'}+ \left[\eta \Big(\frac{R}{\rho}\Big)^N+ C \Big(\frac{\rho}{R}\Big)^{\varsigma p}\right] \Xint-_{B_R}|\na w|^p+ C\Big(\frac{\rho}{R}\Big)^{\varsigma p},
 \end{align*}
 where $(v)_{\rho}=\Xint-_{B_\rho}v~dx$. Now, proof of \eqref{eq82} is a standard procedure by making the use of maximal operators and can be completed as in \cite[Proof of Theorem 1.1, pp 1113-1116]{dibMan}.\QED

\end{proof}
\begin{Corollary}\label{cor2}
 Let $1<q<p$, $p\ge 2$ and $u$ be a solution to the problem: 
  \begin{align*}
  	\left\{  \ -\De_p u -\De_q u = g(x)+b(x) \ \mbox{ in }\Om; \quad u=0 \mbox{ on }\pa\Om,      \right.
  \end{align*}
  where $b\in L^\infty(\Om)$, $c_1 d(x)^{-\al} \le g(x) \le c_2 d(x)^{-\al}$ with $\al\in(1,2)$ and $c_1,c_2>0$ are constants. Then, $|\na u|\in L^m(\Om)$ for $ m < (p-1)/(\al-1)$.
\end{Corollary}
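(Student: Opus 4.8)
The plan is to read the equation as a special instance of \eqref{eq81} and then invoke the Theorem just proved. The operator $-\De_p-\De_q$ is generated by the autonomous field $A(x,z)=A(z):=|z|^{p-2}z+|z|^{q-2}z$, so I would first verify that $A$ satisfies (A1)--(A3). Autonomy makes (A3) trivial, since its left-hand side vanishes identically; (A2) holds with $\nu=1$ because $z\cdot A(z)=|z|^p+|z|^q\ge|z|^p$; and, since $q<p$ gives $|z|^{q-1}\le 1+|z|^{p-1}$ and $p\ge 2$ makes $A$ of class $C^1$, both $|A(z)|$ and $|\pa_z A(z)z|$ are bounded by $\La(1+|z|^{p-1})$, which is (A1). (These are precisely the operators covered by the Remark following Theorem~\ref{sob-int}.)

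Next I would exhibit the right-hand side as a divergence. Let $w$ solve the linear Dirichlet problem $-\Delta w=g+b$ in $\Om$, $w=0$ on $\pa\Om$, and set $f:=\na w$. Since
\[
\mathrm{div}\,A(\na u)=-(g+b)=\Delta w=\mathrm{div}\,f
\]
holds in the weak sense, $u$ is a weak solution of \eqref{eq81} with this choice of $f$, and $\na u\in D^p(\Om)$ because $u\in W^{1,p}_0(\Om)$.

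The heart of the argument is to quantify the integrability of $f=\na w$. Writing $w(x)=\int_\Om G(x,y)\big(g(y)+b(y)\big)\,dy$ through the Green function $G$ of $-\Delta$ with zero Dirichlet data on the $C^2$ domain $\Om$, and using the gradient bound $|\na_x G(x,y)|\le C|x-y|^{1-N}$ together with its decay near $\pa\Om$, one propagates the boundary singularity $g(y)\le c_2\,d(y)^{-\al}$, $\al\in(1,2)$, into the pointwise estimate $|\na w(x)|\le C\,d(x)^{1-\al}$ near $\pa\Om$ (the contribution of $b\in L^\infty(\Om)$ being harmless). Hence, in boundary coordinates $s=d(x)$,
\[
\int_\Om|\na w|^t\,dx\le C\int_0^\mu s^{t(1-\al)}\,ds<\infty \qquad\Longleftrightarrow\qquad t(\al-1)<1,
\]
so that $f=\na w\in L^t(\Om;\mb R^N)$ for every $t<1/(\al-1)$.

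Finally I would apply the Theorem with $m=t(p-1)$. For $t<1/(\al-1)$ and $m\ge p$ it yields $\na u\in L^m(\Om)$ with $m<(p-1)/(\al-1)$, while for $m<p$ the conclusion is immediate from $\na u\in L^p(\Om)$ on the bounded domain $\Om$; I would note that the genuinely new regime is $\al<2-1/p$, and there $p/(p-1)<1/(\al-1)$, so the standing hypothesis $f\in L^{p/(p-1)}(\Om;\mb R^N)$ of the Theorem is indeed satisfied. This establishes $|\na u|\in L^m(\Om)$ for all $m<(p-1)/(\al-1)$. I expect the main obstacle to be the gradient estimate $|\na w(x)|\le C\,d(x)^{1-\al}$, namely controlling $\na_x G$ against the non-integrable weight $d^{-\al}$ up to the boundary; it is exactly this computation that forces the sharp threshold $1/(\al-1)$.
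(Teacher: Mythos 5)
Your proposal is correct and follows essentially the same route as the paper: solve the auxiliary linear problem $-\De w=g+b$, use Green function estimates on the $C^2$ domain (the paper cites Zhao and argues as in Crandall--Rabinowitz--Tartar) to obtain $|\na w|\le C\,d(x)^{1-\al}$, take $f=\na w$ as the divergence datum in \eqref{eq81}, and apply the higher integrability theorem with $A(\xi)=|\xi|^{p-2}\xi+|\xi|^{q-2}\xi$ to conclude $|\na u|\in L^m(\Om)$ for $m<(p-1)/(\al-1)$. One cosmetic caveat: your claim that $p\ge 2$ makes $A$ of class $C^1$ fails at the origin when $1<q<2$, but since $|\pa_z A(z)\,z|\le C\big(|z|^{q-1}+|z|^{p-1}\big)$ the structural bound (A1) still holds, so your verification of (A1)--(A3) --- a check the paper omits, along with your useful observation that the regime $\al\ge 2-1/p$ is subsumed by $\na u\in L^p$ --- goes through.
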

\begin{proof}
 We first consider the following problem:
  \begin{align*}
  \left\{  \ -\De w = g(x)+b(x) \ \mbox{ in }\Om; \quad w=0 \ \mbox{ on }\pa\Om.      \right.
  \end{align*}
 Since the domain is $C^2$ regular, there exists Green function $G(x,y)$ for $-\Delta$ in $\Om$. Furthermore, there exists a positive constant $c$ such that the following holds, for $x,y\in\Om$ (see \cite{zhao}),
 \begin{align*}
 	\frac{1}{c |x-y|^N} \min\{|x-y|^2, d(x)d(y)\}\le G(x,y) \le \frac{c}{|x-y|^N}\min\{|x-y|^2, d(x)d(y)\}.
 \end{align*}
 The solution $w$ is represented by $w(x):=\int_{\Om} G(x,y)g(y)b(y)dy$. Then, using the behaviour of $g$, it is not difficult to prove that 
 \begin{equation*}
 \left\{\begin{array}{rllll}
 &c_3 \; d(x)\le w(x)\le c_4 \; d(x) \qquad \mbox{if }\al<1,\\
 &c_7 \; d(x)^{2-\al} \le w(x) \le c_8 \; d(x)^{2-\al} \quad \mbox{if }\al\in(1,2).
 \end{array}
 \right.
 \end{equation*}
Proceeding similar to the proof of \cite[Theorem 2.5]{crandall}, we can prove that $|\na w|\le c_9 d(x)^{1-\al}$, for some positive constant $c_9$. Therefore, taking $f= -\na w$, we see that $f\in L^{m/(p-1)}(\Om;\mb R^N)$ for all $m<(p-1)/(\al-1)$. Hence, the required result follows from \eqref{eq82} for the choice of $A(\xi):=|\xi|^{p-2}\xi+|\xi|^{q-2}\xi$ with $\xi\in\mb R^N$. \QED
\end{proof}
\textbf{Proof of Theorem \ref{sob-int}}: Let $u\in W^{1,p}_{loc}(\Om)$ be the solution of problem $(PS)$, whose existence and uniqueness is guaranteed by similar steps of Theorems 1.4 and 1.5, respectively of \cite{JDS}. Moreover, $u$ satisfies the following
 \begin{align*}
 	c_1 d(x)^{p/(p-1+\de)}\le u(x) \le c_2 d(x)^{p/(p-1+\de)},
 \end{align*}
for some positive constants $c_1,c_2$. Indeed, one can construct a sub and super solution $\ul u$ and $\ov u$ of $(PS)$, respectively using the estimates of \cite[Lemma 2.2]{JDS} as below
 \begin{align*}
 	-\De_p \ul u -\De_q \ul u  \leq \ul u^{-\de} - \| b \|_{L^\infty(\Om)} \\
 	-\De_p \ov u -\De_q \ov u  \geq \ov u^{-\de} + \| b \|_{L^\infty(\Om)}.
 \end{align*} 
 Then, we observe that
 $u^{-\de}$ behaves as $d(x)^{\frac{-p\de}{p-1+\de}}$ with $\frac{p\de}{p-1+\de}<2$. Let $v$ be the solution of the following problem, as obtained in corollary \ref{cor2},
 \begin{align*}
 \left\{  \ -\De_p v -\De_q v = u^{-\de}+b(x) \ \mbox{ in }\Om; \quad u=0 \mbox{ on }\pa\Om.      \right.
 \end{align*}
By the uniqueness of solution to $(PS)$, we see that $v=u$ in $\Om$. Therefore, employing the result of corollary \ref{cor2}, we get that $|\na u|\in L^m(\Om)$ for all $m<\frac{p-1+\de}{\de-1}$. Noting this and the fact that $u\in C_0(\ov\Om)$, we get the required result of the theorem.\QED

\section{Existence and regularity of solution to $(G_t)$}
 In this section, we obtain the existence result for problem $(G_t)$ with the help of Theorem \ref{thm1} and using semi-discretization in time with implicit Euler method.\\
 \textbf{Proof of Theorem \ref{thm2}}: Fix $N_0\in\mb N$ and set $\De_t=\frac{T}{N_0}$. For $0\le n\le N_0$, define $t_n=n\De_t$ and $g^n(x)=\frac{1}{\De_t}\int_{t_{n-1}}^{t_n} g(x,\tau) d\tau$. Since $g\in L^\infty(Q_T)$, we have $g^n\in L^\infty(\Om)$. Set 
 \[ g_{\De_t}(x,t)= g^n(x), \quad \mbox{if }t\in[t_{n-1},t_n), \mbox{ for } 1\le n\le N_0.  \]
 It follows from Jensen inequality that, for $1<r<\infty$,
  \begin{align}\label{eq41}
  	\|g_{\De_t}\|_{L^r(Q_T)}\le \big( T|\Om| \big)^\frac{1}{r} \|g\|_{L^\infty(Q_T)}
  \end{align}
 and $g_{\De_t}\ra g$ in $L^r(Q_T)$ as $\De_t\ra 0$. We take $\la=\De_t, h= \De_t g^n+ u^{n-1}\in L^\infty(\Om)$ in $(S_\la)$ and define iteratively, $u^n\in W^{1,p}_0(\Om)\cap\mc C_\de$ with the following implicit Euler scheme,
  \begin{align}\label{eq38}
  	\frac{u^n-u^{n-1}}{\De_t} -&\De_p u^n-\De_q u^n -\vth (u^n)^{-\de}= g^n \quad \mbox{in }\Om, \nonumber\\
  	&u^n=0 \qquad \mbox{on }\pa\Om,
  \end{align}
 where $u^0=u_0\in W^{1,p}_0(\Om)\cap\mc C_\de$. Then, for all $n\in\{1,\dots, N_0 \}$ and $t\in[t_{n-1}, t_n)$, we set
 \begin{equation}\label{eq30}
  \begin{aligned}
  	u_{\De_t}(\cdot,t):= u^n(\cdot) \quad\mbox{and } \ 
  	\tl u_{\De_t}(\cdot,t):=\frac{(t-t_{n-1})}{\De_t} \big(u^n(\cdot)-u^{n-1}(\cdot)\big)+u^{n-1}(\cdot).
  \end{aligned}
  \end{equation}
 From the above definition, it is easy to observe that 
  \begin{align}\label{eq31}
  	\frac{\pa \tl u_{\De_t}}{\pa t}-\De_p u_{\De_t} -\De_q u_{\De_t} - \vth u_{\De_t}^{-\de} =g_{\De_t}\in L^\infty(Q_T).
  \end{align}
 Now, multiplying \eqref{eq38} by $\De_t u^n$ and summing from $n=1$ to $N_1\le N_0$, for $\e>0$, using Young inequality, we get 
 \begin{align*}
 	\sum_{n=1}^{N_1} \int_{\Om} (u^n-u^{n-1}) u^n +&\De_t \left( \sum_{n=1}^{N_1} \|u^n\|^p_{W^{1,p}_0(\Om)}+\sum_{n=1}^{N_1} \|u^n\|^q_{W^{1,q}_0(\Om)} -\vth \sum_{n=1}^{N_1} \int_{\Om} (u^n)^{1-\de}\right) \\ &\le C(\e) \|g\|_{L^\infty(\Om)}^{p'} +\e \De_t \sum_{n=1}^{N_1} \|u^n\|^p_{W^{1,p}_0(\Om)},
 \end{align*}
 where $C(\e)>0$ is a constant and $p'=p/(p-1)$. An easy manipulation yields
  \begin{align*}
    \sum_{n=1}^{N_1} \int_{\Om} (u^n-u^{n-1}) u^n= \frac{1}{2} \left( \sum_{n=1}^{N_1} \int_{\Om} |u^n-u^{n-1}|^2 +\int_{\Om} (u^{N_1})^2-\int_{\Om} u_0^2 \right).
  \end{align*} 
  To control the integrals involving singular term, we will construct suitable sub and super solution, 
  $\ul u$ and $\ov u$ in $W^{1,p}_0(\Om)\cap\mc C_\de$ such that $\ul u\le u_0 \le \ov u$ and the following hold in the weak sense in $\Om$,
 \begin{align}\label{eq40}
  \begin{cases}
 	-\De_p \ul u- \De_q \ul u -\vth \ul u^{-\de} \le -\|g\|_{L^\infty(Q_T)} \\
 	-\De_p \ov u- \De_q \ov u - \vth \ov u^{-\de} \ge \|g\|_{L^\infty(Q_T)}.
 	\end{cases}
 \end{align} 
Indeed, since $u_0\in\mc C_\de$, there exist positive constants $k_1,k_2$ such that $k_1 d(x)\le u_0(x)\le k_2 d(x)$ in $\Om$, for $\de<1$. On account of  corollary \ref{cor1}, we can choose $\rho>0$ sufficiently small in Lemma \ref{lemsub} such that $\tl u_\rho \le u_0$ and the first equation of \eqref{eq40} holds with $\ul u=\tl u_\rho$, where $\tl u_\rho$ is the unique positive solution of \eqref{eqS}. To obtain a suitable supersolution, we invoke proposition \ref{sup}. Let $u_M\in W^{1,p}_0(\Om)\cap\mc C_\de$ be the unique solution of problem $(\mc M)$ in \eqref{eq52} with $l=0$ and $M>\max\{1,\vth\}$. Since $\frac{u_M}{d}\ra\infty$ uniformly in $\Om$ as $M\ra\infty$, we can choose $\ov u=u_M$, for $M$ sufficiently large, such that the second equation in \eqref{eq40} holds and $u_0\leq \ov u=u_M$ in $\Om$. Whereas for the case $\de\ge 1$, we construct subsolution exactly as in the proof of Theorem \ref{thm1} with $L=\|g\|_{L^\infty(Q_T)}$ with an additional assumption that $\rho>0$ is taken sufficiently small such that \eqref{eq78} holds. That is, $\ul u=v_\rho$, where $v_\rho$ is the solution of \eqref{eq71} and $\rho>0$ sufficiently small such that $\ul u\le u_0$ (a consequence of \eqref{eq78}). While for the supersolution, we take $l=0$, $L=\|g\|_{L^\infty(Q_T)}$ and $M$ large enough in \eqref{eq77} such that the solution $\tl u_M:=\ov u$ satisfies \eqref{eq80}, that is $\ov u\ge u_0$.
 Then, repeated application of weak comparison principle gives us $\ul u\le u^n \le \ov u$ and thus 
 \begin{align}\label{eq46}
 \ul u \le u_{\De_t}, \tl u_{\De_t} \le \ov u. 
 \end{align}
 Therefore, 
 \begin{align*}
    \De_t \sum_{n=1}^{N_1} \int_{\Om} (u^n)^{1-\de}\le \begin{cases}
        T\int_{\Om} \ov u^{1-\de} <\infty \quad \mbox{if } \de\le 1,\\
         T\int_{\Om} \ul u^{1-\de} <\infty \quad \mbox{if } \de>1.
    \end{cases}
 \end{align*}
 Resuming all the information, we conclude that the sequences $u_{\De_t}, \tl u_{\De_t}\in\mc C_\de$ uniformly and are bounded in $L^p(0,T; W^{1,p}_0(\Om))\cap L^\infty(0,T; L^\infty(\Om))$. Proceeding similar to \cite[(2.10)-(2.12), Page 5054]{badra}, we obtain
 \begin{equation}\label{eq32}
  \begin{aligned}
    \frac{\De_t}{2} \sum_{n=1}^{N_1} \int_{\Om} \Big(\frac{u^n-u^{n-1}}{\De_t}\Big)^2 &+\frac{1}{p}  \int_{\Om} |\na u^{N_1}|^p -\frac{1}{p}\int_{\Om} |\na u_0|^p  +\frac{1}{q}  \int_{\Om} |\na u^{N_1}|^q -\frac{1}{q}\int_{\Om} |\na u_0|^q \\
     & -\frac{\vth}{1-\de} \left( \int_{\Om} |u^{N_1}|^{1-\de} -\int_{\Om} |u_0|^{1-\de} \right) \le \frac{|\Om| T}{2} \|g\|^2_{L^\infty(Q_T)}.
  \end{aligned}
 \end{equation}
 From above expression together with the fact $\int_{\Om} (u^n)^{1-\de}\le \max\{ \int_{\Om} \ov u^{1-\de}, \int_{\Om} \ul u^{1-\de}  \}$, we infer that 
  \begin{align}\label{eq33}
  	&\frac{\pa \tl u_{\De_t}}{\pa t} \text{ is bounded in } L^2(Q_T) \text{ uniformly in }\De_t, \\ \label{eq34}
  	u_{\De_t}, \tl u_{\De_t} &\text{ are bounded in } L^\infty(0,T; W^{1,p}_0(\Om)) \text{ uniformly in }\De_t.
  \end{align}
 On account of the preceding observation and \eqref{eq32}, an easy computation yields
  \begin{align}\label{eq35}
  	\| u_{\De_t}- \tl u_{\De_t} \|_{L^\infty(0,T;L^2(\Om))} \le \max_{1\le n\le N_1} \|u^n -u^{n-1} \|_{L^2(\Om)} \le C (\De_t)^{1/2}.
  \end{align}  
 Taking $N_1\ra\infty$, that is $\De_t\ra 0^+$, from \eqref{eq33} and \eqref{eq34}, up to a subsequence, we obtain 
 \begin{equation}\label{eq42}
  \left\{\quad 
  \begin{aligned}
  	&\tl u_{\De_t} \overset{\ast}{\rightharpoonup} u \quad \mbox{ in }L^\infty(0,T; W^{1,p}_0(\Om)\cap L^\infty(\Om)), \\
  	&u_{\De_t} \overset{\ast}{\rightharpoonup} v \quad \mbox{ in }L^\infty(0,T; W^{1,p}_0(\Om)\cap L^\infty(\Om)), \\
  	&\frac{\pa \tl u_{\De_t}}{\pa t} \rightharpoonup \frac{\pa u_{\De_t}}{\pa t} \qquad\mbox{ in } L^2(Q_T),
  \end{aligned}
  \right.
  \end{equation}
 as $\De_t\ra 0^+$, for some $u,v\in L^\infty(0,T; W^{1,p}_0(\Om)\cap L^\infty(\Om))$ with $u,v\in\mc C_\de$ uniformly and $\frac{\pa u_{\De_t}}{\pa t}\in L^2(Q_T)$. With the help of \eqref{eq35} and \eqref{eq46}, we conclude that $u\equiv v$ and $\ul u\le u \le\ov u$. Therefore, $u\in \mc V(Q_T)$.
  Now we will prove that $u$ is a solution of $(G_t)$. For this, set 
 	\begin{align*}
 	  S=\big\{ u\in L^\infty(0,T; W^{1,p}_0(\Om)) : \frac{\pa u}{\pa t}\in L^2(Q_T) \big\}.
 	\end{align*}
 By Aubin-Lions-Simon lemma, we get that $S$ is compactly embedded into $C([0,T];L^2(\Om))$. Therefore, by interpolation identity, for all $r>1$, up to a subsequence, we have
  \begin{align}\label{eq45}
  	\tl u_{\De_t} \ra u \quad \mbox{in } C(0,T; L^r(\Om)) \mbox{ as } \De_t\ra 0.
  \end{align}
 Using \eqref{eq35}, for all $r>1$, we deduce that 
    \begin{align*}
     u_{\De_t} \ra u \quad \mbox{in } L^\infty(0,T; L^r(\Om)) \mbox{ as } \De_t\ra 0.	
    \end{align*}
 Multiplying \eqref{eq31} by $(u_{\De_t}-u)$ and using the above convergence results, we obtain
 \begin{align*}
 	&\int_{Q_T} \left(\frac{\pa\tl u_{\De_t}}{\pa t} -\frac{\pa u_{\De_t}}{\pa t} \right) (\tl u_{\De_t}-u) dxdt-\int_{0}^{T} \ld \De_p u_{\De_t}, u_{\De_t}-u \rd dt 
 	- \int_{0}^{T} \ld \De_q u_{\De_t}, u_{\De_t}-u \rd dt \\
 	&\quad-\vth\int_{Q_T} u_{\De_t}^{-\de} (u_{\De_t}-u) dxdt =\int_{Q_T} g_{\De_t} (u_{\De_t}-u) dxdt + o_{\De_t}(1).
 \end{align*}
 Since $\ul u\le u_{\De_t}, \tl u_{\De_t} \le \ov u$, by dominated convergence theorem, we have 
 \begin{align*}
 	\int_{Q_T} u_{\De_t}^{-\de} (u_{\De_t}-u) dxdt = o_{\De_t}(1), \quad
 	\int_{Q_T} g_{\De_t} (u_{\De_t}-u) dxdt = o_{\De_t}(1).
 \end{align*}
 Furthermore, performing integration by parts and taking into account the convergence results of $u_{\De_t}$ and $\tl u_{\De_t}$, we deduce that 
  \begin{align}\label{eq37}
  	\frac{1}{2}\int_{\Om} |\tl u_{\De_t} -u|^2(T)dx - \int_{0}^{T} \big(\ld \De_p u_{\De_t}-\De_p u, u_{\De_t}-u \rd + 
  	 \ld \De_q u_{\De_t}-\De_q u, u_{\De_t}-u \rd \big)dt = o_{\De_t}(1).
  \end{align} 
 Now, we distinguish the following cases:\\
 \textit{Case}(i): $q\ge 2$.\\
 In this case, we will use the following inequality 
 \[ |a-b|^l \le 2^{l-2} (|a|^{l-2}a-|b|^{l-2}b)(a-b) \quad \mbox{for }a,b\in\mb R^N, \mbox{ and }l\ge 2.   \]
 Then, from \eqref{eq37}, we deduce that 
 \begin{align*}
 	\int_{0}^{T} \int_{\Om} |\na( u_{\De_t}- u)|^p dxdt =o_{\De_t}(1) \quad \mbox{and }  
 	\int_{0}^{T} \int_{\Om} |\na( u_{\De_t}- u)|^q dxdt =o_{\De_t}(1).
 \end{align*}
 \textit{Case}(ii): $1<q<p<2$.\\
 In this case, we consider the following inequality
 \[ |a-b|^l \le C_l \big( (|a|^{l-2}a-|b|^{l-2}b)(a-b) \big)^\frac{l}{2} (|a|^l +|b|^l)^\frac{2-l}{2} \quad \mbox{for }a,b\in\mb R^N, \mbox{ and } 1<l< 2,   \]
 where $C_l$ is a positive constant depending only on $l$. Therefore, noting the fact that $u\not\equiv 0$ and using H\"older inequality, we obtain
 \begin{align*}
 	0\le \int_{\Om} |\na (u_{\De_t}-u)|^p &\le C_p \int_{\Om} \big( (|\na u_{\De_t}|^{p-2}\na u_{\De_t}-|\na u|^{p-2}\na u) (\na u_{\De_t}-\na u)\big)^\frac{p}{2} \\ 
 	&\qquad\qquad \times \big(|\na u_{\De_t}|^p +|\na u|^p\big)^\frac{2-p}{2} ~dx \\
 	&\le C_p \left(\int_{\Om} (|\na u_{\De_t}|^{p-2}\na u_{\De_t}-|\na u|^{p-2}\na u) (\na u_{\De_t}-\na u)\right)^\frac{p}{2} \\
 	&\qquad
 	 \times\Big( \|u_{\De_t}\|^p_{W^{1,p}_0(\Om)} + \|u\|^p_{W^{1,p}_0(\Om)}\Big)^\frac{2-p}{2}.
 \end{align*}
 This implies 
 \begin{align*}
 	\frac{ \left( \int_{\Om} |\na (u_{\De_t}-u)|^p\right) ^\frac{2}{p} }{\Big( \|u_{\De_t}\|^p_{W^{1,p}_0(\Om)} + \|u\|^p_{W^{1,p}_0(\Om)}\Big)^\frac{2-p}{p} } \le C_p \int_{\Om} (|\na u_{\De_t}|^{p-2}\na u_{\De_t}-|\na u|^{p-2}\na u) (\na u_{\De_t}-\na u)dx
 \end{align*}
  and similar result holds for the integrals with exponent $q$. 
  Thus taking into account \eqref{eq35}, we deduce that 
  \begin{align*}
    \int_{0}^{T} \int_{\Om} |\na( u_{\De_t}- u)|^p dxdt =o_{\De_t}(1) \quad \mbox{and }  
    \int_{0}^{T} \int_{\Om} |\na( u_{\De_t}- u)|^q dxdt =o_{\De_t}(1).
  \end{align*}
 \textit{Case}(iii) : $1<q\le 2 \le p$.\\
 In this case the convergence results follows from cases (i) and (ii).\\
 Furthermore, from the above discussion, it is clear that 
 \begin{align}\label{eq43}
 	-\De_p u_{\De_t} \ra -\De_p u \;\mbox{ in } L^{p'} (0,T; W^{-1,p'}(\Om)) \quad \mbox{and }  -\De_q u_{\De_t} \ra -\De_q u \;\mbox{ in } L^{q'} (0,T; W^{-1,p'}(\Om)),
 \end{align}
 where $p'$ and $q'$ are H\"older conjugate of $p$ and $q$, respectively. Moreover, from \cite[(2.23), page 5056]{badra}, we have
 \begin{align}\label{eq44}
 	u_{\De_t}^{-\de} \ra u^{-\de} \quad \mbox{in }L^\infty(0,T; W^{-1,p'}(\Om)).
 \end{align}
 Therefore, from \eqref{eq41}, \eqref{eq42}, \eqref{eq43}, \eqref{eq44} and using dominated convergence theorem, we get that $u$ is a weak solution of $(G_t)$. 
 Next, we will show that such a weak solution $u\in\mc C_\de$ is unique. On the contrary, assume $v\in\mc C_\de$ is another weak solution of $(G_t)$. Then, from the weak formulation of $(G_t)$, we have 
 \begin{align*}
 	\int_{\Om} \frac{\pa(u-v)}{\pa t}(u-v) - \ld \De_p u-\De_p v, u-v \rd - \ld \De_q u -\De_q v, u-v \rd =\vth\int_{\Om} (u^{-\de}-v^{-\de})(u-v).
 \end{align*}
 Taking into account the inequalities used in Cases (i) and (ii), from above equation, we obtain 
 \begin{align*}
    \frac{\pa}{\pa t} \int_{\Om} \frac{1}{2} (u-v)^2 dx \le 0.
 \end{align*}
 Therefore, the function $H: [0,T]\to \mb R$, defined by $H(t)= \int_{\Om} \frac{1}{2} (u-v)^2 dx $ is decreasing. Since $u$ and $v$ are distinct and $u(\cdot, 0)= v(\cdot, 0)$, we have 
  \[ 0< H(t)\le H(0)=0 \quad \mbox{ for }t\in[0,T], \]
  which is a contradiction. Thus $H(t)=0$ for all $t\in[0,T]$ and hence $u\equiv v$. \\
  To complete the proof, we need to show that $u\in C([0,T]; W^{1,p}_0(\Om))$ and \eqref{eq7} holds. From \eqref{eq42} and \eqref{eq45}, it is clear that $u\in L^\infty(0,T; W^{1,p}_0(\Om))$ and $u\in C([0,T]; L^2(\Om))$. Therefore, it follows that $u :t\in[0,T]\ra W^{1,p}_0(\Om)$ is weakly continuous and $u(\cdot, t_0)\in W^{1,p}_0(\Om)$ together with $\| u(\cdot, t_0)\|_{W^{1,p}_0(\Om)}\le \liminf_{t\ra t_0} \| u(\cdot, t)\|_{W^{1,p}_0(\Om)}$ for all $t_0\in [0,T]$ (and analogous result holds for $W^{1,q}_0(\Om)$ also). Multiplying \eqref{eq38} by $u^n -u^{n-1}$ and integrating over $\Om$ then summing from $N_2 \le N_1$, we get 
  \begin{align*}
  	\frac{\De_t}{2} \sum_{n=N_2}^{N_1} \int_{\Om} & \Big[\frac{u^n-u^{n-1}}{\De_t}\Big]^2 +\frac{1}{p} \left[ \int_{\Om} |\na u^{N_1}|^p -\int_{\Om} |\na u^{N_2-1}|^p \right] +\frac{1}{q} \left[ \int_{\Om} |\na u^{N_1}|^q -\int_{\Om} |\na u^{N_2-1}|^q \right] \\
  	&\quad -\frac{\vth}{1-\de} \left[ \int_{\Om} |u^{N_1}|^{1-\de} -\int_{\Om} |u^{N_2-1}|^{1-\de} \right] \le \sum_{n=N_2}^{N_1}\int_{\Om} g_{\De_t} (u^n-u^{n-1})dx.
  \end{align*}
 For $t_1\in[t_0,T]$, taking $N_2$ and $N_1$ such that $N_2\De_t \ra t_0$ and 
 $N_1\De_t \ra t_1$ as $\De_t\ra 0^+$, we obtain
 \begin{align}\label{eq39}
  \int_{t_0}^{t_1} &\int_{\Om}  \Big(\frac{\pa u}{\pa t}\Big)^2 dxdt +\frac{1}{p} \|u(\cdot,t_1)\|^p_{W^{1,p}_0(\Om)}  +\frac{1}{q}  \|u(\cdot,t_1)\|^q_{W^{1,q}_0(\Om)} 
   -\frac{\vth}{1-\de}  \int_{\Om} |u(x,t_1)|^{1-\de} dx \nonumber\\
  & \le \int_{t_0}^{t_1}\int_{\Om} g \frac{\pa u}{\pa t}dxdt +\frac{1}{p} \|u(\cdot,t_0)\|^p_{W^{1,p}_0(\Om)} +\frac{1}{q}  \|u(\cdot,t_0)\|^q_{W^{1,q}_0(\Om)}- \frac{\vth}{1-\de}  \int_{\Om} |u(x,t_0)|^{1-\de} dx.
 \end{align}
 Assume that $\|u(\cdot,t_1)\|_{W^{1,p}_0(\Om)} \ra l_1$ and $\|u(\cdot,t_1)\|_{W^{1,q}_0(\Om)} \ra l_2$, as $t_1\ra t_0^+$. Then, using  the fact that $u\in L^\infty(0,T; L^r(\Om))$ for all $r>1$ and dominated convergence theorem, passing on the limit $t_1\ra t_0^+$, we get 
    \begin{align}\label{eq36}
    \frac{1}{p} l_1^p  +\frac{1}{q}  l_2^q  \le \frac{1}{p} \|u(\cdot,t_0)\|^p_{W^{1,p}_0(\Om)}  +\frac{1}{q}  \|u(\cdot,t_0)\|^q_{W^{1,q}_0(\Om)}.
  \end{align}
Since $l_1\ge \|u(\cdot,t_0)\|_{W^{1,p}_0(\Om)}$ and $l_2\ge \|u(\cdot,t_0)\|_{W^{1,q}_0(\Om)}$, from \eqref{eq36}, we obtain $\|u(\cdot,t_1)\|^p_{W^{1,p}_0(\Om)}\ra \|u(\cdot,t_0)\|^p_{W^{1,p}_0(\Om)}$ as $t_1\ra t_0^+$ (and similar result in $W^{1,q}_0(\Om)$). Therefore, $u(t)\ra u(t_0)$ in $W^{1,p}_0(\Om)$, as $t\ra t_0^+$, which implies that $u$ is right continuous on $[0,T]$.
To prove the left continuity, we take $t_1>t_0$ and $0<k<t_1-t_0$. Set $\rho_k(u)(s):= \frac{u(s+k)-u(s)}{k}$, then taking it as a test function in the weak formulation of $(G_t)$ and using convexity, we deduce that 
 {\small  \begin{align*}
    \int_{t_0}^{t_1} &\int_{\Om} \frac{\pa u}{\pa t}\rho_k(u) dxdt +\frac{1}{kp}\int_{t_0}^{t_1} \int_{\Om} (|\na u(t+k)|^p -|\na u(t)|^p) +\frac{1}{kq}\int_{t_0}^{t_1} \int_{\Om} (|\na u(t+k)|^q -|\na u(t)|^q) \\
 	&\quad -\frac{\vth}{k(1-\de)}\int_{t_0}^{t_1} \int_{\Om} \big(u^{1-\de}(t+k)-u^{1-\de}(t)  \big)\ge \int_{t_0}^{t_1}\int_{\Om} g\rho_k(u)dxdt.
 \end{align*} }
 Now proceeding similar to \cite[page 5068]{badra}, as $k\ra 0^+$, we get
 \begin{align*}
 	 \int_{t_0}^{t_1} &\int_{\Om}  \Big(\frac{\pa u}{\pa t}\Big)^2 dxdt +\frac{1}{p} \|u(\cdot,t_1)\|^p_{W^{1,p}_0(\Om)}  +\frac{1}{q}  \|u(\cdot,t_1)\|^q_{W^{1,q}_0(\Om)} 
 	-\frac{\vth}{1-\de}  \int_{\Om} |u(x,t_1)|^{1-\de} dx \\
 	& \ge \int_{t_0}^{t_1}\int_{\Om} g \frac{\pa u}{\pa t}dxdt +\frac{1}{p} \|u(\cdot,t_0)\|^p_{W^{1,p}_0(\Om)} +\frac{1}{q}  \|u(\cdot,t_0)\|^q_{W^{1,q}_0(\Om)}- \frac{\vth}{1-\de}  \int_{\Om} |u(x,t_0)|^{1-\de} dx.
 \end{align*}
 From \eqref{eq39}, it is clear that the above inequality is in fact an equality. Therefore, using the fact that $t\mapsto\int_{\Om} u(t)^{1-\de}dx$ is continuous, we obtain $u\in C([0,T]; W^{1,p}_0(\Om))$. Moreover, \eqref{eq7} follows by taking $t_0=0$ in the above equality.\QED
 

 \section{Solution of problem $(P_t)$: Existence, uniqueness, regularity and asymptotic behaviour}
 In this section we obtain solution of problem $(P_t)$ under different growth conditions on the nonlinear term $f$.
 \subsection{Sub-homogeneous case}
Here, we study problem $(P_t)$ when the nonlinear term $f$ exhibit the sub-homogeneous growth with respect to the exponent $q$. First, we prove the existence result. \\
 \textbf{Proof of Theorem \ref{thm5}}: Let $T>0$ and $n\in\mb N$. Set $\De_t := T/n$. We follow the proof done in Section 2, to construct a sequence of solution $\{u_n\}\subset L^\infty(\Om)\cap W^{1,p}_0(\Om)$ of the following problem:
 \begin{align}\label{eq53}
 	u^n -\De_t \Big(\De_p u^n+\De_q u^n + \vth (u^n)^{-\de}\Big)= \De_t f(x, u^{n-1})+u^{n-1} \quad \mbox{in }\Om.
 \end{align}
 To start the iteration we take $u^0 =u_0\in\mc C_\de$. Applying Theorem \ref{thm1} for $\la=\De_t$ and $h=\De_t f(x,u^0)+u^0\in L^\infty(\Om)$, we get $u^1\in\mc C_\de\cap W^{1,p}_0(\Om)$ and satisfies \eqref{eq53}. Continuing the above procedure we get the existence of a sequence $\{u_n\}\in W^{1,p}_0(\Om)\cap \mc C_\de$. Furthermore, since $u_0\in\mc C_\de$, there exists $0<c_1<1<c_2$ such that $c_1 d\le u_0 \le c_2 d$ a.e. in $\Om$. Taking into account Corollary \ref{cor1}, we take $\rho>0$ small enough in Lemma \ref{lemsub} such that $ \tl u_\rho(x)\le u_0(x)$ in $\Om$ and it forms a subsolution of \eqref{eq53} (here we use the relation $f(x,s)\ge -L$). For the supersolution, we choose $M > \max\{1, \vth \}$ large enough in Proposition \ref{sup} such that $u_0\le v_M$ in $\Om$, where $v_M$ is the solution of problem $(\mc M)$ as in \eqref{eq52}. Set $\ul u(x):=\tl u_\rho$ and $\ov u=v_M$, then using weak comparison principle, we see that $\ul u \le u_n \le \ov u$ holds uniformly in $n\in\mb N$. While for the case $\de\ge 1$, we take $\ul u:=v_\rho$ as in the proof of Theorem \ref{thm4} with $\rho>0$ sufficiently small such that \eqref{eq78} holds, that is $\ul u\le u_0$. For the supersolution, we take $l=m$ and $M$ large enough in \eqref{eq77} such that the solution $\tl u_M:=\ov u$ satisfies \eqref{eq80}, that is $\ov u\ge u_0$. \\
 Let $u_{\De_t}$, $\tl u_{\De_t}$ be defined as in \eqref{eq30}. Set $u_{\De_t}(\cdot, t)= u_0(\cdot)$ for $t<0$ and $g_{\De_t}:= f(x,u_{\De_t}(t-\De_t))$ on $[0,T]$. Then, it is easy to verify that $u_{\De_t}$ and $\tl u_{\De_t}$ satisfies \eqref{eq31}. Due to the fact that $u_{\De_t}\in [\ul u, \ov u]$ and the map $t\mapsto f(x,t)$ is continuous on $[\ul u, \ov u]$, we get that $g_{\De_t}$ is bounded in $L^\infty(Q_T)$. Following the proof of Theorem \ref{thm2}, we get the existence of $u\in L^\infty(Q_T)\cap L^\infty(0,T; W^{1,p}_0(\Om))$ such that, up to a subsequence, 
  \begin{align*}
  	&u_{\De_t}, \tl u_{\De_t} \overset{\ast}{\rightharpoonup} u \in L^\infty(0,T; W^{1,p}_0(\Om)) \ \mbox{ and in }L^\infty(Q_T), \\
  	&\qquad   \frac{\pa \tl u_{\De_t}}{\pa t} \rightharpoonup \frac{\pa u}{\pa t} \ \ \mbox{ in }L^2(Q_T) \\
  	&u_{\De_t}^{-\de}\in L^\infty(0,T; W^{-1,p'}(\Om)) \quad \mbox{and } \|\tl u_{\De_t}-u_{\De_t} \|_{L^2(\Om)} \le C(\De_t)^\frac{1}{2}.
  \end{align*}
 Using Aubin-Lions-Simon lemma and proceeding similar to the proof of Theorem \ref{thm2}, for all $r>1$, we obtain 
 \begin{align}\label{eq54}
   u_{\De_t}, \tl u_{\De_t} \ra u \ \ \mbox{ in }L^\infty(0,T; L^r(\Om)) \quad \mbox{and } u\in C([0,T], L^r(\Om)).
 \end{align}
 Let $\omega>0$ be the Lipschitz constant of $f$ on $[\ul u, \ov u]$, then we have 
 \begin{align*}
    \| f(x, u_{\De_t}(t-\De_t)) -f(x, u(t)) \|_{L^2(\Om)} \le \omega \| u_{\De_t} (t-\De_t) -u(t) \|_{L^2(\Om)}
 \end{align*}
 and from \eqref{eq54}, we infer that $g_{\De_t}= f(x, u_{\De_t}(\cdot-\De_t)) \ra f(x,u)$ in $L^\infty(0,T; L^2(\Om))$. We can now complete rest of the proof proceeding similar to the proof of Theorem \ref{thm2}. The other part of the Theorem follows from the proof of \cite[Theorem 0.13, 5067]{badra}.\QED

\textbf{Proof of Proposition \ref{prop2}}: Let $u_0\in \ov{\mc D(\mc A)}^{L^\infty(\Om)}$, $\la>0$ and $f,g\in L^\infty(\Om)$. Invoking Theorem \ref{thm2}, we obtain the existence of unique solution $u,v\in W^{1,p}_0(\Om)\cap\mc C_\de\cap C_0(\ov\Om)$ of the problem
\begin{align}\label{eq62}
\begin{cases}
u+\la \mc A(u)=f \quad\mbox{in }\Om,\\
v+\la \mc A(v)=g \quad\mbox{in }\Om,
\end{cases}
\end{align}
respectively. It is clear that $u,v\in\mc D(\mc A)$. Set $w:=(u-v-\|f-g\|_{L^\infty(\Om)})^+$ and taking this as a test function in the weak formulation of \eqref{eq62}, we deduce that 
\begin{align*}
\int_{\Om} w^2+ \la \int_{\Om} (|\na u|^{p-2}\na u -|\na v|^{p-2}\na v)\na w+ \la \int_{\Om} (|\na u|^{q-2}\na u -|\na v|^{q-2}\na v)\na w \le 0.
\end{align*}
Invoking the inequalities used in the proof of theorem \ref{thm2} case (i) and (ii), we deduce that $u-v \le \|f-g\|_{L^\infty(\Om)}$. Reversing the role of $u$ and $v$, we obtain $\|u-v\|_{L^\infty(\Om)}\le \|f-g\|_{L^\infty(\Om)}$. This proves that the operator $\mc A$ is $m$-accretive in $L^\infty(\Om)$. Now, rest of the proof can be completed using \cite[Chap.4, Theorem 4.2 and Theorem 4.4]{barbu} or one can follow the steps of \cite[Proposition 0.2]{badra}. \QED

\textbf{Proof of Corollary \ref{cor3}}: On account of the assumption $u_0\in\mc D(\mc A)$, we deduce from proposition \ref{prop2} that
 \begin{align}\label{eq64}
 	\sup_{t\in [0,T]} \| u_t(\cdot, t) \|_{L^\infty(\Om)} \le e^{\omega T} \| \De_p u_0 +\De_q u_0 +\vth u_0^{-\de}+f(x,u_0)\|_{L^\infty(\Om)}.
 \end{align}
 Then, we see that the problem $(P_t)$ falls into the category of $(PS)$ with $b(x):=u_t(x,t)$, for  $\de>1$ (thanks to \eqref{eq64}). Using Theorem \ref{sob-int}, we get that $u(t)\in W^{1,m}_0(\Om)$ for all $m<\frac{p-1+\de}{\de-1}$ and its bound depends only on $m,p,\Om,N$ and the bounds of $u_0$ and $u^{-\de}$. This together with the fact $u\in\mc C_\de$ uniformly in $t\in[0,T]$, implies that $u \in L^\infty(0,T;W^{1,m}_0(\Om))$. Now, the claim of the corollary follows by interpolation argument, that is, $u \in C([0,T];W^{1,l}_0(\Om))$ for all $p\le l<m<\frac{p-1+\de}{\de-1}$, for $1<\de<2+1/(p-1)$. \QED

\textbf{Proof of Theorem \ref{thm6}}: Let $\ul u,\ov u\in W^{1,p}_0(\Om)\cap\mc C_\de\cap C(\ov\Om)$ be the sub and supersolution, as considered in theorem \ref{thm4}, to the problem
\begin{equation*}
 (P)\; \left\{\begin{array}{rllll}
 -\Delta_{p}u -\Delta_{q}u & = \vth \; u^{-\de}+ f(x,u), \; u>0 \text{ in } \Om, \\ u&=0 \quad \text{ on }  \pa\Om 
\end{array}
\right.
\end{equation*}
such that $\ul u\le u_0\le \ov u$. Let $u$ be the solution of problem $(P_t)$. Invoking Theorem \ref{thm5} again, we get the existence of $w_1$ and $w_2$, the unique solution of problem $(P_t)$ with initial datum $\ul u$ and $\ov u$, respectively. We claim that $\ul u, \ov u\in\ov{\mc D(\mc A)}^{L^\infty(\Om)}$. Let $h,g\in W^{-1,\frac{p}{p-1}}(\Om)$ be such that $\mc A\ul u:=h$ and $\mc A\ov u:=g$. Then, by the construction of $\ul u$ and $\ov u$, we have $h:=\mc A(\ul u)\le -L\le 0$ and $g:=\mc A(\ov u)\ge L+m \ov u^{q-1}\ge 0$. Set $h_n:=\max\{ h,-n\}$ and $g_n:=\min\{g,n\}$. Let $\{u_n\}$ and $\{v_n\}$ be two sequences in $\mc D(\mc A)$ such that $\mc Au_n=h_n$ and $\mc Av_n=g_n$. As a consequence of weak comparison principle, we obtain $\{u_n\}$ is nonincreasing while $\{v_n\}$ is nondecreasing. Since $h_n\ra h$ and $g_n\ra g$ in $W^{-1,\frac{p}{p-1}}(\Om)$, as $n\ra\infty$, we get $u_n\ra \ul u$ and $v_n\ra \ov u$ in $W^{1,p}_0(\Om)$. Therefore, up to a subsequence, $u_n\ra \ul u$ and $v_n\ra \ov u$ pointwise a.e. in $\Om$ and using Dini's theorem, we obtain $u_n\ra \ul u$ and $v_n\ra \ov u$ in $L^\infty(\Om)$. This proves the claim.\\
Next, from the fact that  $\ul u, \ov u\in\ov{\mc D(\mc A)}^{L^\infty(\Om)}$ and Theorem \ref{thm5}, we get $w_1,w_2\in C([0,T];C_0(\ov\Om))$. Furthermore, since $\ul u, \ov u\in\mc C_\de$, taking $\ul u^0=\ul u$ in the iterative scheme \eqref{eq53}, we obtain a nondecreasing sequence $\{\ul u^n\}$, where $\ul u^n$ is solution to the problem \eqref{eq53}, for $\De_t<1/\omega$ with $\omega>0$ as the Lipschitz constant of $f$ on $[\ul u, \ov u]$. Analogously, by taking $\ov u^0=\ov u$, we get a nonincreasing sequence $\{\ov u^n\}$, where  $\ov u^n$ is solution to the problem \eqref{eq53}. Let $\{u^n\}$ be the sequence obtained in \eqref{eq53} with $u^0=u_0$, then by the choice of $\De_t$, we can show that 
\begin{align}\label{eq61}
	\ul u^n \le u^n \le \ov u^n.
\end{align}
Following the proof of Theorem \ref{thm5} and using \eqref{eq61}, we deduce that $w_1(t)\le u(t) \le w_2(t)$. Also, we note that the map $t\mapsto w_1(x,t)$ is nondecreasing while $t\mapsto w_2(x,t)$ is nonincreasing. Therefore, $w_1(t)\ra w_1^\infty$ and $w_2(t)\ra w_2^\infty$ as $t\ra\infty$, where $w_1^\infty$ and $w_2^\infty$ is solution to the stationary problem $(P)$. Let $S(t)$ be the semigroup on $L^\infty(\Om)$ generated by the evolution equation $u_t+\la \mc A(u)=f(x,u)$, then we have
\begin{align*}
  w_1^\infty=\lim_{\tl t\ra\infty} S(\tl t+t)(\ul u)= S(t) \lim_{\tl t\ra\infty} S(\tl t)(\ul u)=S(t)\lim_{\tl t\ra\infty} w_1(\tl t)=S(t)w_1^\infty,
\end{align*}
and analogously, $w_2^\infty= S(t)w_2^\infty$. By the uniqueness result for problem $(P)$, we have $w_1^\infty=w_2^\infty=u_\infty\in C(\ov\Om)$. Therefore, by Dini's theorem, $w_1(t)\ra u_\infty$ and $w_2(t)\ra u_\infty$,  in $L^\infty(\Om)$ as $t\ra\infty$, then the required result of the Theorem follows from $w_1(t)\le u(t) \le w_2(t)$.\QED

\subsection{Super-homogeneous Case}
 In this subsection we discuss problem $(P_t)$ under the assumption (f3) on $f$. We first prove the local existence result using nonlinear semigroup theory of $m$-accretive operators.\\
 \textbf{Proof of Theorem \ref{thm7}}:  Let us fix $v\in L^\infty(Q_T)$ and consider the problem
  \begin{equation*}
  (B_t)\; \left\{\begin{array}{rllll}
  u_t-\Delta_{p}u -\Delta_{q}u -\vth \; u^{-\de} & = f(v), \; u>0 \text{ in } \Om\times (0,T), \\ u&=0 \quad \text{ on }  \pa\Om\times (0,T), \\
  u(x,0)&= u_0(x) \; \text{ in }\Om,
  \end{array}
  \right.
  \end{equation*}
 Set $\mc A u:= -\De_p u -\De_q u - \vth \; u^{-\de}$, then from the proof of Proposition \ref{prop2}, it is clear that the operator $\mc A$ is $m$-accretive in $L^\infty(\Om)$. Therefore,  $\mc A$ generates a contraction semigroup $S(t)$. From Theorem \ref{thm2}, there exists a unique solution $u\in C([0,T],L^\infty(\Om))$ and by nonlinear Semigroup theory (see \cite{barbu1}), the following representation formula holds
 \begin{align}\label{eq94}
 	u(t)= S(t)u_0+ \int_{0}^{t}S(t-\tau)f(v(\tau))d\tau.
 \end{align} 
Now, we will apply fixed point theorem to get a solution of problem $(P_t)$ from here.  Define $\mc F: C([0,T]; L^\infty(\Om))\subset L^\infty(Q_T)\to C([0,T]; L^\infty(\Om))$ as $\mc F(v)=u$, where $u$ is the solution to $(B_t)$. We consider the case $u_0\not\equiv 0$, and take $R_1>0$ such that $R_1= 2\| u_0 \|_{L^\infty(\Om)}$. For simplicity, denote $X= C([0,T];L^\infty(\Om))$ and $\|v\|_{X}= \max_{t\in[0,T]}\|v(t)\|_{L^\infty(\Om)}$. Then, for $v\in X$ with $\| v\|_{X}\le R_1$,  from \eqref{eq94}, we infer that 
 \begin{align*}
 	|\mc F(v(x,t))|= |u(x,t)| \le |S(t)u_0(x)|+\int_{0}^{t}|S(t-s)f(v(x,s))|ds \quad\mbox{for }(x,t)\in \Om\times(0,T),
 \end{align*}
which upon using the fact that $S(t)$ is a contraction semigroup and $f$ is bounded on the compact set ${\|v\|_{X}\leq R_1}$, we get 
 \begin{align*}
 	\| \mc F(v)\|_{X} \le \| u_0 \|_{L^\infty(\Om)} + T \| f(v) \|_{X} 
 	    \leq \| u_0 \|_{L^\infty(\Om)} + T C_1,
 \end{align*} 
 where $| f(s)|\leq C_1$ for $|s|\le R_1$. Then, we choose $\hat T$ small enough so that $ \hat T C_1 \leq \|u_0 \|_{L^\infty(\Om)}$ (note that choice of $T$ depends only on $u_0$ and $f$). Thus, 
  \begin{align*}
 	\| \mc F(v) \|_{X} \le 2 \| u_0 \|_{L^\infty(\Om)}=R_1
 \end{align*}
 with $X= C([0,\hat T], L^\infty(\Om))$. Next, noting the fact that $f$ is locally Lipschitz, for $v,w\in X$ such that $\| v\|_{X}, \|w\|_{X}\leq R_1$, we have
  \begin{align*}
  	\| f(v)- f(w) \|_{X} \leq \omega \|v-w \|_{X},
  \end{align*}
 where $\omega>0$ is the Lipschitz constant for $f$ on $|s|\le R_1$.  Therefore, for $v,w\in X$ with $\| v \|_{X},\| w \|_{X}\le R_1$, we get
 \begin{align*}
 	\| \mc F(v) - \mc F(w) \| \leq \max_{t\in[0,\hat T]}\int_{0}^{t} |S(t-s)f(v(s))-f(w(s))| \leq \hat T \omega \| v -w\|_{X},
 \end{align*}
 further, taking $\hat T>0$ small enough so that $\hat T \omega<1$, we get that $\mc F$ is a contraction. Therefore, by Banach fixed point theorem, there exists $u\in C([0,\hat T];L^\infty(\Om))$ such that $\mc F(u)=u$, that is, $u$ is a solution of $(P_t)$ in $Q_{T}$, for $0<T<\hat T$. \QED

 \begin{Lemma}\label{lem9}
  For fixed $\e>0$ small enough and $L>0$ large enough, there exists a solution $\ul u_{\e}$ in $C([0,T]:C_0(\ov\Om))$ to the following problem:
 	\begin{equation*}
 	(P_\e)\; \left\{\begin{array}{rllll}
 	u_t-\Delta_{p}u -\Delta_{q}u & = \vartheta \; (u+\e)^{-\de}-L, \; u>0 \text{ in } Q_T, \\ u&=0 \quad \text{ on }  \Ga_T, \\
 	u(x,0)&= u_0(x) \; \text{ in }\Om,
 	\end{array}
 	\right.
 	\end{equation*}
 Moreover, $\ul u_\e \leq u$ in $Q_T$, where $u$ is a weak solution to problem $(P_t)$ under the hypothesis of Theorem \ref{thm8}. 
 \end{Lemma}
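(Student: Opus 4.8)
The plan is to exploit the fact that, once the singular term $u^{-\de}$ is replaced by the regularized term $(u+\e)^{-\de}$, problem $(P_\e)$ becomes a genuinely non-singular parabolic $(p,q)$-equation, so its solvability can be read off the machinery already developed; the actual content of the lemma is the comparison $\ul u_\e\le u$.

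First I would record that the reaction $R_\e(s):=\vth(s+\e)^{-\de}-L$ is bounded on $[0,\infty)$ (by $\vth\e^{-\de}+L$) and Lipschitz there, with derivative bounded by $\vth\de\,\e^{-\de-1}$. Setting $\mc A_\e u:=-\De_p u-\De_q u-\vth(u+\e)^{-\de}$ and observing that $s\mapsto -\vth(s+\e)^{-\de}$ is nondecreasing, the argument of Proposition \ref{prop2} shows that $\mc A_\e$ is $m$-accretive in $L^\infty(\Om)$. Hence $(P_\e)$ is the abstract Cauchy problem $u_t+\mc A_\e u=-L$ with constant $L^\infty$-source and datum $u_0$, and nonlinear semigroup theory (exactly as in the proof of Theorem \ref{thm7}) produces a unique mild solution $\ul u_\e\in C([0,T];L^\infty(\Om))$; alternatively one runs the implicit Euler scheme of Theorem \ref{thm2} with $(S_\la)$ replaced by its regularized analogue, whose resolvent problems are now solvable by minimizing a smooth functional. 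Since every resolvent problem has a bounded right-hand side, the regularity theory for the $(p,q)$-Laplacian (Lieberman) upgrades each iterate to $C_0(\ov\Om)$, giving $\ul u_\e\in C([0,T];C_0(\ov\Om))$.

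The core step is the comparison $\ul u_\e\le u$. I would subtract the weak formulations of $(P_\e)$ and $(P_t)$ and test with $w:=(\ul u_\e-u)^+\in L^\infty(0,T;W^{1,p}_0(\Om))$. The parabolic term yields $\tfrac12\frac{d}{dt}\|w(t)\|_{L^2(\Om)}^2$ and vanishes at $t=0$ because both solutions start from $u_0$; the differences of the $p$- and $q$-Laplacians tested against $w$ are nonnegative by the monotonicity inequalities already invoked in Theorem \ref{thm2}. On the set $\{\ul u_\e>u\}$ the reaction difference equals
\begin{align*}
 \vth\big((\ul u_\e+\e)^{-\de}-u^{-\de}\big)-\big(L+f(u)\big),
\end{align*}
and both brackets are nonpositive there: the first since $\ul u_\e+\e>u>0$ and $s\mapsto s^{-\de}$ is decreasing, the second since $f\ge 0$ on $[0,\infty)$ (a consequence of (f3), as $sf(s)\ge c_r s^r$ forces $f(s)\ge c_r s^{r-1}>0$ for $s>0$) and $L>0$. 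Therefore $\frac{d}{dt}\|w(t)\|_{L^2(\Om)}^2\le 0$ with $w(0)=0$, whence $w\equiv 0$, i.e. $\ul u_\e\le u$ in $Q_T$. All integrals are finite: the regularized term is bounded, and on $\{\ul u_\e>u\}$ one has $u^{-\de}(\ul u_\e-u)\le u^{-\de}\ul u_\e\sim u^{1-\de}\in L^1(\Om)$ because $u,\ul u_\e\in\mc C_\de$ and $\de<2+1/(p-1)$.

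Finally, for the strict positivity I would fix $\e$ so small that $\vth\e^{-\de}$ is large and then take $L$ large but still below $\vth\e^{-\de}$, so that the stationary regularized problem $-\De_p w-\De_q w=\vth(w+\e)^{-\de}-L$ admits a positive solution $\ul w_\e$ with $\ul w_\e\le u_0$; comparing $\ul u_\e$ from below with $\ul w_\e$ by the same test-function computation gives $\ul u_\e\ge \ul w_\e>0$ in the interior. I expect the main obstacle to be precisely this juggling of the two parameters — keeping $L$ large enough for $\ul u_\e$ to be a subsolution of $(P_t)$ (and for its later use in Theorem \ref{thm8}) while retaining a strictly positive lower barrier — together with the rigorous justification of the singular integrals in the comparison step, which is exactly what the restriction $\de<2+1/(p-1)$ secures through the $\mc C_\de$ bounds.
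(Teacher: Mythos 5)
Your proposal takes essentially the same route as the paper: existence for $(P_\e)$ follows because the regularized reaction $\vth(s+\e)^{-\de}-L$ is non-singular and bounded (the paper simply invokes ``standard existence and regularity theory,'' while you flesh this out via the $m$-accretivity/implicit-Euler machinery of Proposition \ref{prop2} and Theorem \ref{thm2}, which is consistent), and the comparison $\ul u_\e\le u$ is obtained exactly as in the paper by testing the difference of the two equations with $(\ul u_\e-u)^+$, using monotonicity of the $p$- and $q$-Laplacian parts, the sign of $\vth\big((\ul u_\e+\e)^{-\de}-u^{-\de}\big)$ on $\{\ul u_\e>u\}$, and $-L-f(u)\le 0$. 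Your derivation of $f\ge 0$ from (f3) is a harmless variant of the paper's direct use of $-L\le f(x,u)$, and your stationary barrier for positivity is extra detail the paper leaves implicit.

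One justification in your integrability check is wrong, though the conclusion survives. You claim $u^{-\de}(\ul u_\e-u)\le u^{-\de}\ul u_\e\sim u^{1-\de}$ ``because $u,\ul u_\e\in\mc C_\de$.'' First, on the very set $\{\ul u_\e>u\}$ where this bound is needed you cannot dominate $\ul u_\e$ by a multiple of $u$; second, $\ul u_\e\notin\mc C_\de$ in general for $\de\ge 1$: since its right-hand side is bounded (by $\vth\e^{-\de}$), near $\pa\Om$ it behaves like $d(x)$, so it satisfies the upper $\mc C_\de$ bound but not the lower bound $\ul u_\e\ge c\,d^{p/(p-1+\de)}$. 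The correct fix is immediate: boundary regularity for the non-singular problem gives $(\ul u_\e-u)^+\le\ul u_\e\le C\,d$, while $u\ge c\,\varphi_\de(d)$, so the singular integrand is $O\big(d^{\,1-\frac{p\de}{p-1+\de}}\big)$, which is integrable whenever $\frac{p\de}{p-1+\de}<2$, i.e. $(p-2)\de<2(p-1)$ --- a condition implied by $\de<2+1/(p-1)$. (The paper itself does not spell this out either; it only records the restriction $\de<2+1/(p-1)$ in the displayed inequality.) With this repair your argument is complete and matches the paper's proof.
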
 
\begin{proof}
 We first note that the problem $(P_\e)$ does not contain singularity on the nonlinear term and it is bounded above by constant depending on $\e$ also. Therefore, standard existence and regularity theorem proves the first part of the lemma. The second part follows from the comparison principle. Indeed, taking $(\ul u_\e-u)^+$ as a test function, for  $\de<2+1/(p-1)$, we get 
 \begin{align*}
 \frac{1}{2}\frac{d}{dt}\int_{\Om^+}(\ul u_\e-u)^2 dx &\le -\ld -\De_p \ul u_\e +\De_p u, \ul u_\e-u \rd - \ld -\De_q \ul u_\e +\De_q u, \ul u_\e-u \rd \\
 & \ +\int_{\Om^+} \vartheta \; ((\ul u_\e+\e)^{-\de}-u^{-\de})(\ul u_\e-u)+(-L-f(x,u))(\ul u_\e-u)\le 0,
 \end{align*}
 where $\Om^+:=\Om\cap\{\ul u_\e\ge u \}$ and $-L\le f(x,u)$. Since $\ul u_\e(x,0)=u_0=u(x,0)$, we deduce that $\ul u_\e \leq u$ in $Q_T$.\QED
\end{proof}	
 \begin{Lemma}
 	Under the hypothesis of Theorem \ref{thm8}, there holds
 	\begin{align}\label{eq91}
 	J_\vth(u(t))= J_\vth(u_0)-\int_{0}^{t}\int_{\Om} u_t^2 dx ds.
 	\end{align}
 \end{Lemma}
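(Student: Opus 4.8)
The identity \eqref{eq91} is, up to rearrangement, the energy balance already recorded in Theorem \ref{thm5}, so the plan is to transfer that balance to the present super-homogeneous setting and then collect terms. First I would invoke the Remark following Theorem \ref{thm7}: since the solution $u$ produced by the fixed point argument lies in $C([0,T];W^{1,p}_0(\Om))\cap\mc C_\de$ and $f$ is locally Lipschitz, problem $(P_t)$ is of the same type as $(G_t)$ on $[0,T]$, and the energy equality of Theorem \ref{thm5} holds verbatim. Explicitly, for $\de\neq 1$ and all $t\in[0,T]$,
\begin{equation*}
\begin{aligned}
\int_0^t\int_\Om u_t^2\,dx\,d\tau &+\frac1p\int_\Om|\na u|^p+\frac1q\int_\Om|\na u|^q-\frac{\vth}{1-\de}\int_\Om u^{1-\de} \\
&=\int_\Om F(u(t))+\frac1p\int_\Om|\na u_0|^p+\frac1q\int_\Om|\na u_0|^q-\frac{\vth}{1-\de}\int_\Om u_0^{1-\de}-\int_\Om F(u_0),
\end{aligned}
\end{equation*}
where $F(u)=\int_0^u f(s)\,ds$ (for $\de=1$ the terms $v^{1-\de}/(1-\de)$ are replaced by $\log v$).

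Moving $\int_\Om F(u(t))$ to the left and the dissipation $\int_0^t\int_\Om u_t^2$ to the right, the left-hand side becomes exactly
$$\frac1p\int_\Om|\na u|^p+\frac1q\int_\Om|\na u|^q-\frac{\vth}{1-\de}\int_\Om|u|^{1-\de}-\int_\Om F(u)=J_\vth(u(t)),$$
while the remaining constant block evaluates to $J_\vth(u_0)$. This yields $J_\vth(u(t))=J_\vth(u_0)-\int_0^t\int_\Om u_t^2\,dx\,ds$, which is \eqref{eq91}. The sign is consistent with the dissipative nature of the flow, so $t\mapsto J_\vth(u(t))$ is nonincreasing; this monotonicity is precisely what feeds the subsequent concavity argument for blow up.

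The one genuinely delicate point — and the reason this deserves a separate lemma — is the validity of the energy equality itself in the presence of the singular term $\vth\,u^{-\de}$, which makes $J_\vth$ fail to be G\^ateaux differentiable along the flow: $u_t$ is not an admissible test function and one cannot naively write $\frac{d}{dt}J_\vth(u)=-\int_\Om u_t^2$. I would clear this obstacle exactly as in the proof of Theorem \ref{thm2}, working at the level of the implicit Euler scheme \eqref{eq53}: testing with $u^n-u^{n-1}$ gives a discrete inequality of the form \eqref{eq39}, while the difference-quotient/convexity argument yields the reverse inequality, so that in the limit $\De_t\to 0^+$ the balance becomes an equality. The uniform two-sided bounds $\ul u\le u^n\le\ov u$ in $\mc C_\de$ keep $\int_\Om u^{1-\de}$ and $\int_\Om u^{-\de}u_t$ integrable up to $\pa\Om$ and legitimize the limit passage by dominated convergence; the comparison $\ul u_\e\le u$ of Lemma \ref{lem9} supplies the regular lower barrier that bounds $u$ away from zero in the interior, which is what justifies handling the singular energy term. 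Once this is in place, the identity follows by the rearrangement above.
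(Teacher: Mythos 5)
Your proposal is correct, but it takes a genuinely different route from the paper. The paper proves the lemma in three lines: it differentiates $t\mapsto J_\vth(u(t))$ directly along the flow, writing $\frac{d}{dt}J_\vth(u(t))=\int_{\Om}\big(|\na u|^{p-2}+|\na u|^{q-2}\big)\na u\,\na u_t-\int_{\Om}\big(u^{-\de}+f(u)\big)u_t=-\int_{\Om}u_t^2$, and then integrates from $0$ to $t$; the sole justification offered for differentiating the singular potential term is precisely the comparison $\ul u_\e\le u$ from Lemma \ref{lem9}. You instead bypass the chain rule entirely: you transfer the energy equality of Theorem \ref{thm5} to the super-homogeneous setting via the Remark after Theorem \ref{thm7} (legitimate, since on $[0,T]$ with $T<\hat T$ the solution is bounded in $L^\infty$, so $g:=f(u)\in L^\infty(Q_T)$ and $(P_t)$ is of type $(G_t)$, for which Theorem \ref{thm2} gives \eqref{eq7}), and you obtain \eqref{eq91} by pure rearrangement, with the identity $\int_0^t\int_\Om f(u)u_t=\int_\Om F(u(t))-\int_\Om F(u_0)$ absorbed into the constant block. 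When you then anchor the equality itself in the implicit Euler scheme — the forward inequality \eqref{eq39} and the reverse difference-quotient/convexity inequality, with the two-sided $\mc C_\de$ barriers $\ul u\le u^n\le\ov u$ controlling the singular integrals — you are in effect re-deriving the content of Theorem \ref{thm2} rather than quoting it, which is more work but makes the argument airtight: the paper's one-line differentiation implicitly uses $u_t$ as a test function in the weak formulation, a step that is formally delicate for singular problems, whereas your discretization route never needs it. The trade-off is length versus transparency: the paper's proof is immediate once Lemma \ref{lem9} is granted and makes the dissipative structure visible at a glance, while yours routes through already-established machinery and clarifies exactly where each inequality direction comes from; note also that in your version Lemma \ref{lem9} is not strictly needed for the energy identity itself (the Euler-scheme barriers suffice there), whereas it is the linchpin of the paper's proof.
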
	
 \begin{proof}
  We note that due to the relation $\ul u_\e \le u$, from lemma \ref{lem9}, we can differentiate the singular term with respect to $t$. Therefore,
 	\begin{align*}
 	\frac{d}{dt}J_\vth(u(t))&= \int_{\Om} (|\na u|^{p-2}+|\na u|^{q-2})\na u \na u_t-\int_{\Om} (u^{-\de}+f(u))u_t \\
 	&=\int_{\Om}(-\De_p u -\De_q u - u^{-\de}-f(u))u_t \\
 	&=-\int_{\Om} u_t^2 dx.
 	\end{align*}
  Integrating the above expression from $0$ to $t$ we get the required result.\QED
 \end{proof}
Next, we prove the blow up result, the proof presented here is inspired by idea of \cite{anton} where $p(x)$-equation is considered with super-homogeneous type non-singular nonlinearity. \\
 \textbf{Proof of Theorem \ref{thm8}}:  Set $M(t)= \frac{1}{2}\int_{0}^{t} \| u(\cdot,s) \|_{L^2(\Om)}^2 ds$ for $t\ge 0$. Then, for a solution $u$ to problem $(P_t)$ and taking into account $\ul u_\e \le u$ (see lemma \ref{lem9}), it is easy to prove
 \begin{align}\label{eq92}
  M'(t)= \frac{1}{2}\| u(\cdot,t) \|_{L^2(\Om)}^2 \quad \mbox{and } M''(t)=  \int_{\Om} u u_t dx = -I_\vth(u).
 \end{align}
 Since $r\ge p$, there exists $\la>0$ such that 
   \begin{align*}
   	\frac{1}{r} \leq \la \leq \frac{1}{p}.
   \end{align*}
 From \eqref{eq91}, \eqref{eq92} and using the fact $J_\vth(u_0)\le 0$, we deduce that 
  \begin{align}\label{eq95}
  	\left(\frac{1}{p}-\la\right) \int_{\Om}|\na u|^p  +\left(\frac{1}{q}-\la\right) &\int_{\Om}|\na u|^q  +\left(\la-\frac{1}{1-\de}\right)\vth \int_{\Om}| u|^{1-\de}  +\int_{\Om}( \la f(u)u-F(u)) \nonumber\\
  	&\qquad\quad+ \int_{0}^{t}\int_{\Om} u_t^2 dxdt = J_\vth(u_0)+\la M''(t) \le \la M''(t).
  \end{align}
 For $\de=1$, the third term on the left is replaced by $\la \vth |\Om|-\vth\int_{\Om}\log |u|$ in the above. Now, we prove part (i) of the theorem.\\
 \textit{Proof of (i)}: We first prove $M''(t)>0$ i.e., $I_\vth(u(t))<0$ for $t>0$. On the contrary, assume
 there exists $t_0>0$ such that $I_\vth(u(t))<0$ for all $t<t_0$ and $I_\vth(u(t_0))=0$ (note that $I_\vth(u_0)<0$ and $I_\vth(u(t))$ is continuous in $t$). Since $\frac{1}{2}\frac{d}{dt}\|u\|_{L^2(\Om)}^2= -I_{\vth}(u)$ and $I_{\vth}(u(t))<0$ for all $t<t_0$, it follows that $u_t\not\equiv 0$ in $(0,t_0)$, that is,  $\int_{0}^{t_0}\int_{\Om} u_t^2 dx ds>0$. From \eqref{eq91}, we have $J_{\vth}(u(t_0))<\Theta_{\vth}<0$, for $\vartheta<\la_*$ (which gives us $u(t_0)\neq 0$). Noticing the fact that $I_\vth(u(t_0))=0$ and $u(t_0)\neq 0$, we get $u(t_0)\in\mc N_{\vth}$. Therefore, $J_\vth(u(t_0))\geq \Theta_{\vth}$, which is a contradiction. Hence, $I_{\vth}(u(t))<0$ holds for all $t\ge 0$. This implies that $M'(t)$ is nondecreasing in $t$, hence $M'(t)\ge M'(0)>0$ for all $t>0$. Using this together with Sobolev and H\"older inequality, for $\de<1$, we deduce that 
  \begin{align*}
   \left(\frac{1}{q}-\la\right) \int_{\Om}|\na u|^q    +\left(\la-\frac{1}{1-\de}\right)\vth \int_{\Om}| u|^{1-\de} &\geq \left(\frac{1}{q}-\la\right) \frac{M'(t)^\frac{q}{2}}{C_*^{q}}- \left(\frac{1}{1-\de}-\la \right)\vth \frac{M'(t)^\frac{1-\de}{2}}{C_*^{1-\de}} \\
   &\geq \frac{M'(t)^\frac{1-\de}{2}}{C_*^{1-\de}} \left(\frac{p-q}{pq}\frac{M'(0)^\frac{q-1+\de}{2}}{C_*^{q-1+\de}} -\vth \frac{r-1+\de}{r(1-\de)} \right)\\
   &\geq 0,
  \end{align*}
  for all $\vth<\vth_*:= \min\{ \la_*,\frac{r(p-q)(1-\de)2^{-(q-1+\de)/2}}{pq(r-1+\de)C_*^{q-1+\de}} \|u_0\|_{L^2(\Om)}^{q-1+\de} \}$, where $C_*>0$ is the suitable embedding constant (to avoid cumbersome we have used the same number). While for the case $\de=1$, we note that $\log |u| \leq |u|$, therefore proceeding similarly and replacing the term $M^\prime(t)^{(1-\de)/2}$ by $M^\prime(t)^{1/2}$ in above, we get a similar result.  Thus, from \eqref{eq95}, for all $\vth<\vth_*$, we obtain
   \begin{align}\label{eq97}
   	\int_{\Om}( \la f(u)u-F(u)) dx+\int_{0}^{t}\int_{\Om} u_t^2 dxdt \le \la M''(t).
   \end{align} 
 On the contrary assume that $T^*=\infty$, where
  \begin{align*}
  	T^*:=\sup\{t>0 \; : \; \| u(s) \|_{L^\infty(\Om)}<\infty \; \mbox{ for }s<t \}.
  \end{align*}
 Since $u$ is a non stationary solution of $(P_t)$, it follows from \eqref{eq97} and condition (f3) that there exist positive constants $c_1$ and $t_1$ such that $M''(t)\ge c_1$ and $M(t)\ge c_1$ for all $t\ge t_1$. This implies that $M'(t)\ra \infty$ as $t\ra\infty$.\\ 
 Now, we first consider the case $p>2$. Since $M'(t)\ra\infty$ as $t\ra\infty$, for every $\sigma\in(1,p/2)$, there exists $t_2>t_1$ such that 
 \begin{align}\label{eq98}
 1-\Big(\frac{2\sigma}{p}\Big)^{1/2} \ge \frac{M'(t_1)}{M'(t)} \quad \mbox{and }M(t_2)>0 \quad \mbox{ for all }t\ge t_2.
 \end{align}
 Moreover, from \eqref{eq97} and using H\"older inequality, for all $t\ge t_1$, we have
  \begin{align}\label{eq99}
   \big(M'(t)-M'(t_1)\big)^2 = \left(\int_{t_1}^{t} \int_{\Om} uu_t~dxdt\right)^2 &\leq \int_{t_1}^{t} \| u_t\|_{L^2{(\Om)}}^2 \int_{t_1}^{t}\| u\|_{L^2{(\Om)}}^2 \nonumber\\
   &\leq 2 \la M''(t) M(t) \leq \frac{2}{p} M''(t)M(t).
  \end{align}
 Taking into account \eqref{eq98} and \eqref{eq99}, we obtain
 \begin{align*}
 	\sigma M'(t)^2 \leq M''(t)M(t) \quad\mbox{for all }t\ge t_2.
 \end{align*}
 On integrating the above inequality, we get 
 \begin{align*}
 	M^{\sigma-1}(t)\ge \frac{M^{\sigma-1}(t_2)}{1-(t-t_2)(\sigma-1)M'(t_2)/M(t_2)} \ra\infty \quad\mbox{as }t\ra T^*_1:= t_2+ \frac{M(t_2)}{(\sigma-1)M'(t_2)},
 \end{align*}
 which yields a contradiction to our assumption $T^*=\infty$. Indeed,
  \begin{align*}
  	\infty> \frac{T^*_1|\Om|}{2} \sup_{t\in (0,T^*_1)} \|u(t)\|_{L^\infty(\Om)} \ge \frac{1}{2}\int_{0}^{t}\int_{\Om}u^2 dxdt =M(t)\ra\infty \quad \mbox{as }t\ra T^*_1.
  \end{align*} 
For the case $p\le 2$, we see that $r>2 \ge p$, therefore we can take $\la=\frac{1}{p}>\frac{1}{r}$ in \eqref{eq95} and from \eqref{eq97} using (f3), for all $\vth<\vth_*$, we deduce that 
 \begin{align*}
 	c_r\frac{r-p}{p} \int_{\Om}|u|^r dx \le \int_{\Om}( \la f(u)u-F(u)) dx \le \la M''(t).
 \end{align*}
 Therefore, for $t\ge t_3\ge t_2$ such that $M(t)$, $M'(t)$ and $M''(t)$ all are strictly positive,  and using H\"older inequality, we get 
 \begin{align*}
 	C_r M^\prime(t)^{r/2} \leq M''(t) \quad\mbox{for all }t\ge t_3,
 \end{align*}
 which upon integration gives us
  \begin{align*}
   M^\prime(t)^{r/2-1} \ge \frac{1}{M^\prime(t_3)^{1-r/2}-C_r(r/2-1)(t-t_3)}\ra\infty \quad\mbox{as }t\ra T^*_2:=t_3+\frac{2M'(t_3)^{(1-r/2)}}{C_r(r-2)}.
  \end{align*}
 This yields a contradiction to the assumption $T^*=\infty$, as in the case $p>2$.

 \textit{Proof of (ii)}: In this case, noticing $\de>1$, we see that the third term on the left of \eqref{eq95} is nonnegative. Therefore, we directly arrive at \eqref{eq97} for all $\vth>0$. Now, rest of the proof of the theorem, for non-stationary solution $u$ of problem $(P_t)$, follows similarly as in the case $\de <1$ by distinguishing the cases $p>2$ and $p\le 2$. \QED

 {\bf Acknowledgments}\\
 K. Sreenadh acknowledges the support through the Project:
 MATRICS grant MTR/2019/000121 funded by SERB, India.

\end{document}